\title[The Ordering Principle and DC]{The Ordering Principle and Dependent Choice}
\DeclareMathOperator{\HOD}{HOD}
\DeclareMathOperator{\Ord}{Ord}
\DeclareMathOperator{\dom}{dom}
\DeclareMathOperator{\id}{id}
\DeclareMathOperator{\forces}{\Vdash}
\DeclareMathOperator{\supp}{supp}
\DeclareMathOperator{\CH}{CH}
\DeclareMathOperator{\ZFC}{ZFC}
\DeclareMathOperator{\ZF}{ZF}
\newcommand{\Coll}{Coll}
\DeclareMathOperator{\PP}{\mathbb P}
\newcommand{\MM}{\mathcal{M}}
\newcommand{\NN}{\mathcal{N}}
\newcommand{\DC}{\mathrm{DC}}
\newcommand{\AC}{\mathrm{AC}}
\DeclareMathOperator{\fix}{\mathrm{fix}}
\newcommand{\G}{\mathcal G}
\newcommand{\F}{\mathcal F}
\newcommand{\SSS}{\mathcal S}
\newcommand{\PPP}{\mathbb P}
\newcommand{\GB}{\mathrm{GB}}
\newcommand{\HS}{\mathrm{HS}}
\newcommand{\sym}{\mathrm{sym}}
\newcommand{\QQ}{\mathbb Q}
\newcommand{\sG}{\mathcal{G}}
\newcommand{\sS}{\mathcal{S}}
\newcommand{\sF}{\mathcal{F}}
\newcommand{\sE}{\mathcal{E}}
\newcommand{\sH}{\mathcal{H}}
\newcommand{\sT}{\mathcal{T}}
\newcommand{\OP}{\mathrm{OP}}
\DeclareMathOperator{\ran}{ran}
\author{Peter Holy}
\address{Institut f\"ur diskrete Mathematik und Geometrie\\
TU Wien\\
Wiedner Hauptstrasse 8-10/104\\
1040 Vienna\\
Austria}
\email{peter.holy@tuwien.ac.at}
\author{Jonathan Schilhan}
\address{University of Vienna\\
Institute of Mathematics\\
Kurt Gödel Research Center\\
Kolingasse 14-16\\
1090 Vienna\\
Austria}
\email{jonathan.schilhan@univie.ac.at}
\subjclass[2020]{03E25,03E35}
\keywords{Ordering Principle, Dependent Choice, Symmetric extensions, Iterations}
\begin{document}

\begin{abstract}
  We introduce finite support iterations of symmetric systems, and use them to provide a strongly modernized proof of David Pincus' classical result that the axiom of dependent choice is independent over $\ZF$ with the ordering principle together with a failure of the axiom of choice. 
\end{abstract}

\maketitle

\newtheorem{fact}{Fact}
\newtheorem{lemma}[fact]{Lemma}
\newtheorem{theorem}[fact]{Theorem}
\newtheorem{corollary}[fact]{Corollary}
\newtheorem{claim}[fact]{Claim}
\newtheorem{subclaim}[fact]{Subclaim}
\newtheorem{conjecture}[fact]{Conjecture}
\newtheorem{observation}[fact]{Observation}
\newtheorem{proposition}[fact]{Proposition}
\newtheorem{counterexample}[fact]{Counterexample}
\theoremstyle{definition}
\newtheorem{question}[fact]{Question}
\newtheorem{remark}[fact]{Remark}
\newtheorem{definition}[fact]{Definition}

\section{Introduction}

The ordering principle, $\OP$, is the statement that every set can be linearly ordered.
The axiom of choice, $\AC$, in one of its equivalent forms, states that every set can be wellordered, and thus clearly implies $\OP$. That this implication cannot be reversed was shown by Halpern and L\'evy (see \cite[Section 5.5]{jech}): The argument proceeds by showing that the basic Cohen model, which is well-known to satisfy $\ZF+\lnot\AC$ (see \cite[Section 5.3]{jech}), satisfies $\OP$. This model is obtained by first forcing to add countably many Cohen reals, and then passing to a symmetric submodel of this extension, in which we still have the set of those Cohen reals, but no well-ordering of it. We may informally say that we \emph{forget about} the wellordering in this submodel. On the other hand, it is easy to see that this submodel already fails to satisfy the axiom of dependent choice $\DC$: The generic set of Cohen reals that were added is clearly infinite, however Dedekind-finite (see \cite[Exercise~5.18]{jech}), i.e., $\omega$ does not inject into it. It is well-known (and an easy exercise) that $\DC$ implies the notions of being finite and of being Dedekind-finite to coincide.

The goal of our paper is to provide a new and strongly modernized proof of the following classical result of Pincus \cite{pincus}:

\begin{theorem}[Pincus]\cite{pincus}\label{th:main}
  $\DC$ is independent over $\ZF+\OP+\lnot\AC$.
\end{theorem}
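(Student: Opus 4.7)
The consistency of $\ZF + \OP + \lnot\AC + \lnot\DC$ is already established by the basic Cohen model of Halpern--L\'evy, which the introduction shows satisfies these axioms. The substantive task is therefore to build a model of $\ZF + \OP + \lnot\AC + \DC$, and that is what the plan addresses.

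The plan is to use the framework of finite support iterations of symmetric systems developed in the paper, iterating for $\omega_1$ many stages over a ground model of $\ZFC$ the symmetric Cohen-type system underlying the basic Cohen model. At each stage $\alpha<\omega_1$, the symmetric system adds a new generic Dedekind-finite set $A_\alpha$ of Cohen reals, endowed with the canonical linear order inherited from the forcing conditions, while the automorphism group at stage $\alpha$ is arranged so that this linear order remains invariant but no wellordering of $A_\alpha$ does. The full iterated extension is a ccc extension of $V$ and therefore still satisfies $\AC$; one then passes to the symmetric submodel $N$ consisting of the interpretations of hereditarily symmetric names along the iteration, in the sense the paper will define.

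I would then verify the three required properties in $N$. For $\lnot\AC$, the sequence $\langle A_\alpha \mid \alpha<\omega_1\rangle$ lies in $N$, but $\bigcup_{\alpha<\omega_1} A_\alpha$ admits no wellordering there, since each $A_\alpha$ is Dedekind-finite and the inherited symmetries preclude any globally definable choice. For $\OP$, one proceeds by induction on the rank of a hereditarily symmetric name, combining the linear order on the iteration coordinates with the natural linear order on each $A_\alpha$ to linearly order every element of $N$, essentially carrying the Halpern--L\'evy linear-order argument through the iteration. For $\DC$, given some $R\in N$ with no dead ends on a set $X\in N$, the name of $R$ uses only finitely many coordinates and any candidate witnessing $\omega$-sequence uses only countably many, so the problem reduces to constructing a DC-sequence inside a countable sub-iteration, where $\AC$ is available.

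The main obstacle I expect is the verification of $\DC$. A finite support iteration of ccc forcings is not countably closed, so one cannot appeal to closure of names; instead, the notions of hereditary symmetry and of support for the iterated symmetric system must be set up carefully so that any sequence of length $\omega$ of elements of $N$ factors through a countable initial segment of the iteration, where $\AC$ produces the desired sequence and the symmetries preserve it inside $N$. Formulating this preservation theorem correctly -- so that $\DC$ and $\OP$ both survive the iteration while $\AC$ does not -- is the technical heart of the argument, and is precisely the place where the machinery of ``finite support iterations of symmetric systems'' advertised in the abstract must do its main work.
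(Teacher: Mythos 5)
There is a genuine gap, and it lies at the heart of the construction rather than in the verifications. You describe the iteration as repeating ``the symmetric Cohen-type system'' at every stage, each stage adding a fresh Dedekind-finite set $A_\alpha$ of Cohen reals whose Dedekind-finiteness survives into the final model $N$ (you even use this in your $\lnot\AC$ argument). But an infinite Dedekind-finite set is incompatible with $\DC$, so if each $A_\alpha$ really remained Dedekind-finite in $N$, then $\DC$ would fail there and the construction would not achieve its purpose; your $\lnot\AC$ argument and your $\DC$ argument contradict one another. The essential Pincus idea, which is what the paper implements, is different: only the first iterand is the Cohen system; at every later stage $\delta$ one forces with the symmetrized finite-support product of $\omega$ copies of $\Coll(\omega,A_\delta)$, where $A_\delta$ is the set of \emph{all} generic objects added so far. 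Thus each stage explicitly adds (countably many) surjections of $\omega$ onto the previous stage's witness to the failure of choice, making every $A_\delta$ countable in $N$ and pushing the failure of choice up to $\omega_1$ (the paper's $\lnot\AC$ is via $\lnot\DC_{\omega_1}$: no injection of $\omega_1$ into the union of the generics, proved by the usual homogeneity argument), rather than leaving a Dedekind-finite set at every level.

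Relatedly, your $\DC$ verification is missing its mechanism. Reducing a tree and a candidate branch to a countable sub-iteration where $\AC$ holds does not by itself put a branch into $N$: an $\omega$-sequence of choices made in the full extension has no reason to have a hereditarily symmetric name, and ``the symmetries preserve it'' is exactly what fails in general (it is how $\DC$ fails in the basic Cohen model). What makes $\DC$ work in the paper is $\sigma$-covering between $N$ and the full ccc extension: any countable set of elements of $N$ lying in $M[G]$ is covered by a countable member of $N$, and this covering name is symmetric precisely because the stage-$\alpha$ collapse generic $g_{\alpha,0}$ enumerates all earlier generic parameters, so a single small support suffices; $\DC$ then follows by pruning the covering tree inside $N$. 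Without the collapses at each stage (or some equivalent device ensuring that any failure of $\DC$ appearing at a countable stage is repaired at the next stage), this covering argument is unavailable and the proof of $\DC$ does not go through. Your $\OP$ sketch is closer in spirit to the paper (which injects $N$ into $\Ord\times A^{<\omega}$ using minimal supports and a definable linear order on the generics), but the support-reduction lemmas it rests on also exploit the collapse structure, so it too needs the corrected iterands.
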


Given the properties of the basic Cohen model that we reviewed above, this amounts to verifying the relative consistency of $\ZF+\OP+\DC+\lnot\AC$, starting from $\ZF$. 

Pincus' paper makes use of the ramified forcing notation which developed directly out of Cohen's independence proof for $\CH$. This old-fashioned way of presenting forcing already became outdated and essentially obsolete by the time \cite{pincus} was published (see e.g. Shoenfield's \cite{Shoenfield}) and therefore, while he provides a very nice outline of his arguments, from a modern point of view, the details in his paper are difficult to grasp. For this reason, we think that providing a modern and essentially self-contained account of his result will be very interesting for and helpful to the set theoretic community. Furthermore, this paper provides an application of the technique of symmetric iterations that has been initiated by Asaf Karagila and has been further developed with the second author in \cite{symext}. Although finite support iterations have already appeared in some form in Karagila's \cite{KARAGILA_2019}, we provide a more compact and, at least in our view, simpler approach that follows more closely the familiar notation from usual forcing iterations. It is our hope that the presentation we give below can at some point become part of the general folklore, just as is the case for forcing. For this to happen, the method must be applicable in a broad sense, and our article is a proof-of-concept for this. 

\medskip

Our basic line of argument towards Pincus' result essentially follows the outline at the beginning of \cite[Section 1]{pincus}: The starting point is the basic Cohen model. It has a set of Cohen reals with no wellordering, and in fact, no countably infinite subset. In order to resurrect $\DC$, one simply adds a surjection from $\omega$ onto this set, thus, however, even resurrecting $\AC$. So what Pincus actually does is he adds not just one, but countably many such surjections, and then passes to a symmetric submodel of this second extension in which he forgets about the ordering of this set of surjections, thereby obtaining a new failure of $\DC$. The failure of $\DC$ is thus shifted to a higher level of complexity. Now, the idea is to continue this process for $\omega_1$-many stages, and that any possible failure of $\DC$ will somehow appear at an intermediate stage, and will actually be fixed by the very next stage, i.e., $\DC$ will hold in the final model. What we will actually show however, is something slightly stronger, namely that $\sigma$-covering holds between our symmetric extension and the corresponding full forcing extension, and that this in turn implies $\DC$ to hold in our symmetric extension. Finally, modifying the standard arguments for the basic Cohen model, it is still possible to show that $\OP$ holds in our symmetric extension, while $\AC$ fails in it.

\medskip

Using this as a basic guideline, instead of following any of the further details in Pincus' paper, we came up with our own arguments, which we will provide below. 

\medskip

This paper is organised as follows: In Section \ref{section:preliminaries}, we will introduce some basic terminology regarding symmetric systems and extensions. In Section \ref{section:gb}, we will briefly comment on how to deal with second order models in the context of symmetric extensions. In Section \ref{section:iterations}, we will introduce finite support iterations of symmetric systems. In Section \ref{section:extension}, we will formally introduce the symmetric iteration that we will use to produce our desired model. In Section \ref{section:o}, we will verify that the ordering principle holds in our symmetric extension. In Section \ref{section:dc}, we will show that $\DC$ holds in our symmetric extension, thus establishing Theorem \ref{th:main}.

\section{Preliminaries}\label{section:preliminaries}

A symmetric system is a triple of the form $\SSS=(\PP,\G,\F)$, where $(\PP,\le)$ is a preorder, $\G$ is a group of automorphisms of $\PP$ and $\F$ is a normal filter on the set of subgroups of $\G$. If $\dot x$ is a $\PP$-name and $\pi\in\G$, we inductively let $$\pi(\dot x)=\{(\pi(p),\pi(\dot y))\mid(p, \dot y)\in\dot x\},$$ and we let $\sym(\dot x)=\{\pi\in\G\mid\pi(\dot x)=\dot x\}$. The following fact is well-known and used throughout the paper: 

\begin{fact}
    Let $\dot x_0, \dots, \dot x_n$ be $\PP$-names, $\pi$ an automorphism of $\PP$, $p \in \PP$ and $\varphi(v_0, \dots, v_n)$ some formula in the language of set theory. Then $$p \Vdash_{\PP} \varphi(\dot x_0, \dots, \dot x_n) \text{ if and only if } \pi(p) \Vdash_{\PP} \varphi(\pi(\dot x_0), \dots, \pi(\dot x_n)).$$
\end{fact}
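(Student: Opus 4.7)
The plan is to prove this by induction on the complexity of the formula $\varphi$, reducing everything to the fact that $\pi$, being an automorphism of the preorder, preserves the combinatorial structure (the order, density of sets, and incompatibility) that the forcing relation is defined from. The atomic case will carry the real content; the propositional and quantifier cases are essentially bookkeeping.

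For the base case, I would first treat the atomic formulas $\dot x \in \dot y$ and $\dot x = \dot y$ by a simultaneous induction on the ranks of the $\PP$-names involved, unwinding the standard recursive definition of the forcing relation for atomic formulas. For instance, $p \Vdash \dot x = \dot y$ is defined in terms of statements of the form ``for all $(q,\dot z) \in \dot x$, the set $\{r \le p : r \le q \to r \Vdash \dot z \in \dot y\}$ is dense below $p$'' together with its symmetric counterpart, and $p \Vdash \dot x \in \dot y$ is defined via density of certain sets involving elements of $\dot y$. When $\pi$ is applied, each pair $(q,\dot z)\in\dot x$ becomes the pair $(\pi(q),\pi(\dot z))\in\pi(\dot x)$ by the very definition of $\pi(\dot x)$ in the paper, and relations like $r\le q$ become $\pi(r)\le\pi(q)$ since $\pi$ is an order-automorphism. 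Thus the defining clauses for the atomic forcing relation transform symmetrically under $\pi$, and the induction hypothesis on lower-rank names closes the argument.

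The inductive step on formula complexity is then routine: for $\varphi\land\psi$ the claim is immediate from the induction hypothesis applied to each conjunct; for $\lnot\varphi$ one uses that $p\Vdash\lnot\varphi$ iff no $q\le p$ forces $\varphi$, and invokes that $\pi$ is a bijection on $\PP$ preserving $\le$, so the set of extensions of $p$ corresponds bijectively with the set of extensions of $\pi(p)$; for $\exists v\,\varphi(v)$ one uses the analogous characterization in terms of the density of conditions forcing $\varphi(\dot y)$ for some name $\dot y$, together with the fact that $\pi$ acts as a bijection on the class of $\PP$-names.

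The main obstacle is simply getting the atomic case stated and verified cleanly, because the exact form of the recursion depends on which version of the atomic forcing definition one fixes, and one must make sure that the induction hypothesis is strong enough to apply to both $\in$ and $=$ simultaneously at each rank; this is really just the standard observation behind the symmetry lemma in every textbook treatment of forcing (e.g.\ in \cite{jech}), so I would keep the exposition brief and refer the reader there, since the rest of the paper will treat this fact as black-box input.
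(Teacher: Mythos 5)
Your proposal is correct: it is the standard proof of the symmetry lemma, by induction on formula complexity with the atomic cases handled by a simultaneous rank induction on names, using only that $\pi$ is an order-automorphism acting bijectively on conditions and on names. The paper itself offers no proof of this Fact, stating it as well known, so your argument is precisely the textbook justification the paper implicitly relies on, and nothing further is needed.
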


We say that a $\PP$-name $\dot x$ is \emph{symmetric} (according to $\sS$) if $\sym(\dot x)\in\F$. We (inductively) say that $\dot x$ is \emph{hereditarily symmetric} (according to $\sS$) if $\dot x$ is symmetric and whenever $(p, \dot y) \in \dot x$, $\dot y$ is hereditarily symmetric. Given a symmetric system $\SSS$, we let $\HS_{\SSS}$ denote the collection of hereditarily symmetric $\PP$-names, and we omit the subscript $\SSS$ when it is clear from context. We also refer to elements of $\HS$ as \emph{$\SSS$-names}. The class $\HS$ is further stratified into sets $\HS_\alpha=\HS_{\SSS,\alpha}$, $\alpha$ an ordinal, consisting of the $\sS$-names of rank $<\alpha$.

\medskip

An interesting fact is that we do not have to require our generic filters to be \emph{fully generic} for $\mathbb{P}$: satisfying the weaker property of being \emph{symmetrically generic} suffices. This will briefly be useful in Section~\ref{section:iterations}, but isn't necessary for understanding the main result. 

\begin{definition}
    We say that a dense set $D \subseteq \mathbb{P}$ is \emph{symmetric} if $\sym(D) = \{ \pi \in \sG : \pi[D] = D\} \in \sF$.
    Let $G$ be a filter on $\mathbb{P}$. Then $G$ is \emph{$\sS$-generic}, or \emph{symmetrically generic}, over $M$, if for every symmetric dense subset $D \subseteq \mathbb{P}$ in $M$, $G \cap D \neq \emptyset$. 
\end{definition}

If $M$ is a transitive model of $\ZF$, a symmetric extension of $M$ via $\sS$, or in other words, an $\sS$-generic extension of $M$, is a model of the form $M[G]_\SSS=\{\dot x^G\mid\dot x\in\HS\}$ for an $\SSS$-generic filter $G$ over $M$. We let $\forces_\SSS$ denote the (symmetric) forcing relation of the system $\SSS$, which is defined inductively just like the usual forcing relation, however restricting to hereditarily symmetric names (in particular also in the existential and universal quantification steps). The forcing theorem holds with respect to this relation and $\SSS$-generic extensions \cite{symext}.

\begin{fact}[see \cite{symext}]
Let $G$ be $\sS$-generic over $M$. Then we have the following: 
\begin{itemize}
    \item \emph{The symmetric forcing theorem}: The forcing theorem holds with respect to $\Vdash_\sS$ and $\sS$-generic extensions of $M$.
    \item $M[G]_\sS \models \ZF$.
    \item There is a $\mathbb{P}$-generic $H$ over $M$ so that $M[H]_\sS = M[G]_\sS$.
\end{itemize}  
\end{fact}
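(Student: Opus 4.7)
The plan is to prove the three bullets in turn, each by adapting the classical Cohen-style forcing arguments to the symmetric context. For the symmetric forcing theorem, the approach is to define $\Vdash_\sS$ by recursion on formulas, copying the usual definition verbatim in the atomic and propositional cases and restricting the quantifier clauses to $\HS$, and then to prove $M$-definability and the truth lemma simultaneously by induction on formula complexity. The atomic and propositional cases go through unchanged. The crux is the existential clause: to deduce $p \Vdash_\sS \exists v\,\varphi(v,\vec{\dot x})$ from a witness existing in $M[G]_\sS$, one argues that the set $D = \{q \le p : \exists \dot y \in \HS\; q \Vdash_\sS \varphi(\dot y,\vec{\dot x})\}$ is dense below $p$ and is \emph{symmetric}, since the $\pi$-invariance fact recalled in the excerpt shows that $D$ is closed under every element of $\sym(\vec{\dot x}) \in \sF$. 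Hence the $\sS$-genericity of $G$ suffices to meet $D$.

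For $M[G]_\sS \models \ZF$, Extensionality and Foundation follow from transitivity. Pairing, Union, and Infinity are witnessed by canonical symmetric names (e.g.\ $\{(\1,\dot x),(\1,\dot y)\}$ for pairs, whose symmetry group contains $\sym(\dot x)\cap\sym(\dot y)\in\sF$). Separation uses names of the form $\{(p,\dot y)\in\dot x : p\Vdash_\sS\varphi(\dot y,\vec{\dot z})\}$, whose symmetry group contains $\sym(\dot x)\cap\sym(\vec{\dot z})$ by applying the $\pi$-invariance fact inside the forcing relation. The subtler axioms are Power Set and Replacement: for Power Set one exploits the rank-stratification $\HS = \bigcup_\alpha \HS_\alpha$ to reduce every subset of $\dot x^G$ in $M[G]_\sS$ to a name in a uniformly bounded level, producing a single symmetric name for $\pow(\dot x^G) \cap M[G]_\sS$; Replacement then reduces to Collection plus Separation, with Collection proved by a reflection argument applied to the $\sS$-forcing relation.

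The third bullet is where I expect the main obstacle. The strategy is to extend $G$ (within $V$, using that the collection of dense subsets of $\PP$ belonging to $M$ is an $M$-set) to a fully $\PP$-generic filter $H$ over $M$ via a Rasiowa--Sikorski-style diagonalisation, and then to verify $M[H]_\sS = M[G]_\sS$. The equality $\dot x^G = \dot x^H$ for every $\dot x \in \HS$ should be proved by $\in$-induction: since $G \subseteq H$, the inclusion $\dot x^G \subseteq \dot x^H$ follows from the inductive hypothesis applied to the subnames, while the reverse inclusion is the technical heart, requiring a mixing-style argument that any new pair $(p,\dot y)\in\dot x$ activated by a condition $p \in H\setminus G$ has its evaluation $\dot y^H$ already realised in $\dot x^G$ by some other pair whose condition lies in $G$. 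This is precisely where the symmetric forcing theorem from the first bullet feeds back, since the internal membership relation of the symmetric extension is controlled by $\Vdash_\sS$ and hence by the symmetric dense sets that $G$ already meets.
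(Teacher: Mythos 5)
The paper itself gives no proof of this Fact --- it is quoted from \cite{symext} --- so your proposal has to stand on its own. Your treatment of the first two bullets is the standard one and is fine in outline, with one repairable slip in the truth-lemma step: the set $D=\{q\le p:\exists \dot y\in\HS\ q\Vdash_\sS\varphi(\dot y,\vec{\dot x})\}$ is a set of conditions below $p$, and an automorphism $\pi\in\bigcap_i\sym(\dot x_i)$ need not fix $p$, so $D$ is not invariant and is not a symmetric dense subset of $\PP$ in the sense of the definition; symmetric genericity only guarantees meeting symmetric dense subsets of the whole poset. The repair is to use instead the set of all $q\in\PP$ that either force $\varphi(\dot y,\vec{\dot x})$ for some $\dot y\in\HS$ or force $\neg\exists v\,\varphi(v,\vec{\dot x})$; this set is dense in $\PP$ and is invariant under $\bigcap_i\sym(\dot x_i)\in\F$, using the displayed symmetry fact together with the observation that $\pi(\dot y)\in\HS$ (normality of $\F$).

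The third bullet, however, contains a genuine gap: a symmetrically generic filter need not have any fully generic extension, so the plan of enlarging $G$ to a $\PP$-generic $H\supseteq G$ by a Rasiowa--Sikorski diagonalisation fails at the first step (note also that what such a diagonalisation needs is countability in $V$ of the family of dense sets lying in $M$, not that this family is a set of $M$). Concretely, take $\sS=\sT(\mathbb{C})$, the basic Cohen system. Meeting the symmetric dense sets already forces $G$ to decide every bit of every coordinate, so $G$ determines reals $\langle x_n : n<\omega\rangle$, and any condition compatible with every member of $G$ must satisfy $q(n)\subseteq x_n$ for all $n\in\dom q$; hence every filter $H\supseteq G$ consists only of such conditions. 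Now fix in $M$ an enumeration $\langle d_n : n<\omega\rangle$ of nonempty Cohen conditions and let $D=\{q:\exists n\in\dom q\ q(n)\supseteq d_n\}$, which is dense but not symmetric. One can build a symmetrically generic $G$ with $x_n\not\supseteq d_n$ for every $n$: symmetric genericity never asks to meet $D$, and when meeting a dense $E$ with $\fix(e)\le\sym(E)$ below a current condition $p$, one may move any fresh coordinates by a permutation in $\fix(e\cup\dom p)$ so as to dodge the $d_n$'s. For such a $G$, no filter extending $G$ meets $D$, so no $H\supseteq G$ is $\PP$-generic over $M$. Consequently the $H$ of the Fact cannot in general be found above $G$, and the pair-by-pair preservation $\dot x^G=\dot x^H$ that your ``mixing'' step aims at is also not the right target: equality $M[H]_\sS=M[G]_\sS$ only requires each element of the model to be the $H$-evaluation of \emph{some} hereditarily symmetric name, not of the same name that produced it from $G$. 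Producing such an $H$ by a genuinely different construction is exactly the nontrivial content that the paper outsources to \cite{symext}, and your proposal does not supply it.
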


The last item really says that there is no distinction between the models obtained via full versus symmetric generics.

\medskip

If $A$ is a set of $\PP$-names, then $A^\bullet = \{\mathbbm{1}\} \times A$ is the canonical $\PP$-name for the set containing the elements of $A$ (or more precisely, their evaluations by the generic filter). Similar notation is applied to sequences of $\PP$-names, so that for instance $\langle \dot a_i : i < n \rangle^\bullet$ becomes the canonical name for the ordered tuple of $\dot a_i$, $i < n$.

\medskip

We will sometimes use the following general fact, which says that we can uniformly find names for definable objects.

\begin{fact}\label{fact:namebydef}
Let $\sS= (\PP,\G,\F)$ be a symmetric system, and let $\varphi(u,v_0, \dots, v_n)$ be a formula in the language of set theory. Then, there is a definable class function $F$ so that for any $\sS$-names $\dot x_0, \dots, \dot x_n$ and $p\in \PP$ with $$p \Vdash_{\sS} \exists! y \varphi(y, \dot x_0, \dots, \dot x_n),$$ $\dot y=F(p, \dot x_0, \dots, \dot x_n)$ is an $\sS$-name with $\bigcap_{i \leq n} \sym(\dot x_i) \leq \sym(\dot y)$ so that $$p \Vdash_{\sS} \varphi(\dot y, \dot x_0, \dots, \dot x_n).$$
\end{fact}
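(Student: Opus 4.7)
The plan is to adapt the classical \emph{nice name} construction to the symmetric setting: define $F(p, \vec{\dot x})$ by a name-comprehension formula whose defining predicate only mentions $\PP$, $\HS_\alpha$, $\Vdash_\sS$, and $\vec{\dot x}$, so that invariance of the resulting name under $H := \bigcap_{i \leq n} \sym(\dot x_i)$ falls out syntactically from the invariance of $\Vdash$ under automorphisms (the first Fact above), using that $H$ fixes $\vec{\dot x}$.

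Concretely, I first fix a definable ordinal $\alpha := \alpha(p, \vec{\dot x})$ such that for every $q \leq p$ there are $r \leq q$ and $\dot y^\ast \in \HS_\alpha$ with $r \Vdash_\sS \varphi(\dot y^\ast, \vec{\dot x})$. Such an $\alpha$ exists by replacement in $M$: unpacking $p \Vdash_\sS \exists y\,\varphi$ via the inductive clause for the existential quantifier produces, for each $q \leq p$, a least rank $\beta(q)$ at which such a witness appears, and $\alpha := \sup_{q \leq p}\beta(q)$ is a set-sized supremum. With this choice, set
\[ F(p, \vec{\dot x}) := \bigl\{(q, \dot z) \in \PP \times \HS_\alpha : q \Vdash_\sS \exists y\,(\varphi(y, \vec{\dot x}) \wedge \dot z \in y)\bigr\}. \]

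Writing $\dot y := F(p, \vec{\dot x})$, I verify the three conclusions. For any $\pi \in H$, the invariance fact together with $\pi(\vec{\dot x}) = \vec{\dot x}$ shows that the defining predicate of $\dot y$ is preserved under $(q, \dot z) \mapsto (\pi(q), \pi(\dot z))$, and $\pi$ permutes both $\PP$ and $\HS_\alpha$; hence $\pi(\dot y) = \dot y$, so $H \leq \sym(\dot y)$, and normality of $\F$ gives $\sym(\dot y) \in \F$, so $\dot y \in \HS$. For any symmetric generic $G$ containing $p$, write $y_\varphi^G$ for the unique $y$ with $\varphi(y, \vec{\dot x}^G)$: the inclusion $\dot y^G \subseteq y_\varphi^G$ is immediate from forced uniqueness, and by (full) genericity, invoking the last clause of the second Fact to replace $G$ by a $\PP$-generic producing the same symmetric extension if necessary, the choice of $\alpha$ furnishes some $r \in G$ with $r \leq p$ and $\dot y^\ast \in \HS_\alpha$ with $r \Vdash_\sS \varphi(\dot y^\ast, \vec{\dot x})$, so $(\dot y^\ast)^G = y_\varphi^G$; every element of $y_\varphi^G$ then arises as $\dot z^G$ for some $(s, \dot z) \in \dot y^\ast$ with $s \in G$ and $\dot z \in \HS_\alpha$, and a common refinement $t \leq s,r$ in $G$ forces $\dot z \in y_\varphi$, so $(t, \dot z) \in \dot y$. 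The symmetric forcing theorem then yields $p \Vdash_\sS \varphi(\dot y, \vec{\dot x})$. The main point, and the only genuinely non-routine step, is choosing the defining predicate of $F$ so that syntactic $H$-invariance and correct evaluation are produced together: the trick is to suppress the parameter $p$ from the comprehension and quantify existentially over $y$, making the formula visibly $H$-invariant while still picking out exactly the elements of $y_\varphi$ wherever $y_\varphi$ is forced to exist.
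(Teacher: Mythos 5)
Your proof is essentially the paper's: both fix a rank bound first (your density-plus-replacement argument is exactly the justification for the paper's least $\gamma$ with $p \Vdash_{\sS} \exists y \in \HS_\gamma^\bullet\,\varphi(y,\vec{\dot x})$) and then define the name as all $(q,\dot z)\in\PP\times\HS_\gamma$ such that $q$ forces a $p$-free statement placing $\dot z$ in the witness (you use $\exists y(\varphi\wedge\dot z\in y)$ where the paper uses $\forall y(\varphi\rightarrow\dot z\in y)$, which agree where it matters because uniqueness is forced below $p$), so that invariance under $\bigcap_{i\le n}\sym(\dot x_i)$ falls out of the invariance of the forcing relation. The one spot to repair is your appeal to ``replace $G$ by a $\PP$-generic producing the same symmetric extension'': that generic need not contain $p$ nor evaluate the names the same way, so instead either note that it suffices to verify the conclusion for full generics $G\ni p$ (which the forcing theorem allows), or note that your density property already gives $p \Vdash_{\sS} \exists y\in\HS_\alpha^\bullet\,\varphi(y,\vec{\dot x})$, which hands you the required witness name of rank below $\alpha$ directly inside $M[G]_\sS$ for any symmetric generic $G\ni p$.
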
 

\begin{proof}
Let $\gamma$ be the least ordinal such that \[p \Vdash_{\sS} \exists y \in \HS_\gamma^\bullet\ \varphi(y, \dot x_0, \dots, \dot x_n).\] Let $\dot y$ be the set of all pairs $(q, \dot z) \in \PP \times \HS_\gamma$ so that $$q \Vdash \forall y  (\varphi(y, \dot x_0, \dots, \dot x_n) \rightarrow \dot z \in y) \}.$$
\end{proof}

\section{Symmetric extensions as second order models}\label{section:gb}

Let $\SSS=(\PP,\G,\F)$ be a symmetric system in a model $\MM=(M,\mathcal C)$ of $\GB$, that is G\"odel-Bernays set theory, with $M$ its domain of sets, and with $\mathcal C$ its domain of classes. In case we are starting with a model of $\ZF$, it yields a model of $\GB$ when endowed with its definable classes. In $\MM$, we say that a class $\dot X\subseteq\PP\times\HS$ is a \emph{class $\SSS$-name} if \[\sym(\dot X):=\{\pi\in\G\mid\pi(\dot X)=\dot X\}\in\F,\] where $\pi(\dot X) = \{ (\pi(p), \pi(\dot x)) : (p, \dot x) \in \dot X \}$. Let $G$ be an $\sS$-generic filter over~$\MM$, and let $\NN=\MM[G]_{\SSS}=(M[G]_\sS,\mathcal C[G]_\sS)$, where $\mathcal C[G]_\sS$ is obtained by evaluating class $\sS$-names in $\mathcal C$ with $G$.\footnote{Note that the classes of $\NN$ thus include all classes of $\MM$ (for they have canonical symmetric names).} We will use uppercase letters to refer to classes and class names, while lowercase letters indicate sets or set names. When we allow for classes as parameters in first order formulas, we also mean to include additional atomic formulas of the form $x\in X$.

\begin{proposition}\label{proposition:classes}
   The symmetric forcing theorem can be extended to first order formulas using classes as parameters, and $\NN\models\GB$.
\end{proposition}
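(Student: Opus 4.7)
The plan is to extend the definition of the symmetric forcing relation $\Vdash_\sS$ to formulas containing class names as parameters, following the familiar template for verifying that G\"odel--Bernays set theory is preserved under set forcing. First I would handle atomic formulas of the form $\dot z \in \dot X$, where $\dot X$ is a class $\sS$-name and $\dot z \in \HS$, by setting $p \Vdash_\sS \dot z \in \dot X$ to mean that $\{q \le p : \exists (r, \dot w) \in \dot X\ (q \le r \text{ and } q \Vdash_\sS \dot z = \dot w)\}$ is dense below $p$. Since the axioms of $\GB$ only allow quantification over sets in first-order formulas, the inductive clauses for Boolean connectives and quantifiers can be copied verbatim from the proof of the usual symmetric forcing theorem, quantifying only over names in $\HS$. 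Definability in $\MM$ of this extended $\Vdash_\sS$ is then immediate by induction, with each class name entering simply as an additional class parameter.

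Next I would establish the truth lemma: $\NN \models \varphi(\dot x_1^G, \dots, \dot X_1^G, \dots)$ iff some $p \in G$ forces $\varphi(\dot x_1, \dots, \dot X_1, \dots)$. The only new case is atomic membership in a class name, where one uses that the set $\{p \in \PP : p \Vdash_\sS \dot z \in \dot X \text{ or } p \Vdash_\sS \dot z \notin \dot X\}$ is dense, and that its definition is invariant under $\sym(\dot z) \cap \sym(\dot X)$. Hence it is a symmetric dense set, and is consequently met by any $\sS$-generic filter $G$. The remaining inductive cases are routine.

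Finally, to see $\NN \models \GB$, the set axioms are inherited from $M[G]_\sS \models \ZF$, while class extensionality and class foundation are immediate from the definition of $\mathcal C[G]_\sS$. The substantive task is the class comprehension schema (from which the remaining class axioms follow). Given a first-order formula $\varphi(v, w_1,\dots,w_n, W_1, \dots, W_k)$ with only set quantifiers, $\sS$-names $\dot x_1, \dots, \dot x_n$ and class $\sS$-names $\dot X_1, \dots, \dot X_k$, I would define
\[ \dot Y = \{(p, \dot z) \in \PP \times \HS : p \Vdash_\sS \varphi(\dot z, \dot x_1, \dots, \dot x_n, \dot X_1, \dots, \dot X_k)\}. \]
By the extended symmetric forcing theorem just established, $\dot Y^G$ is exactly the class $\{x : \NN \models \varphi(x, \dot x_1^G, \dots, \dot X_k^G)\}$. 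The one delicate point, which I expect to be the main obstacle, is to verify that $\dot Y$ is itself a class $\sS$-name; this follows from the automorphism invariance of the forcing relation recalled near the beginning of Section~\ref{section:preliminaries}, which yields $\sym(\dot Y) \supseteq \bigcap_i \sym(\dot x_i) \cap \bigcap_j \sym(\dot X_j)$. Since $\F$ is a normal filter and this intersection is finite, it belongs to $\F$, so $\dot Y$ is indeed a class $\sS$-name.
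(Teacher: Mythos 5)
Your extension of $\Vdash_\sS$ to atomic formulas $\dot z\in\dot X$, the truth lemma via symmetric dense sets, and your verification of Class Comprehension (including the symmetry check $\sym(\dot Y)\supseteq\bigcap_i\sym(\dot x_i)\cap\bigcap_j\sym(\dot X_j)$, using normality of $\F$) are all in line with what the paper does or leaves to the reader. However, there is a genuine gap in your reduction of the $\GB$ axioms. You claim that ``the set axioms are inherited from $M[G]_\sS\models\ZF$'' and that the remaining class axioms follow from Class Comprehension. This is false for exactly the axiom the paper singles out as the substantive one: Collection/Replacement (and likewise Separation) \emph{with class parameters}. The fact $M[G]_\sS\models\ZF$ only gives these schemas for formulas with set parameters, and an arbitrary class $\dot X^G\in\mathcal C[G]_\sS$ need not be definable over $M[G]_\sS$ from sets; moreover, predicative Class Comprehension never yields that the image of a set under a class function is a set -- it only produces that image as a class. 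So $\GB$ Replacement/Collection requires its own forcing argument, and in the choiceless setting it is not routine, since one cannot simply pick a witness name for each $\dot x\in\ran(\dot a)$.

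The paper handles this by directly building a set-sized collecting name: given $p\Vdash_\sS\forall x\,\exists y\,\varphi(x,y,\dot X)$ (coding all class parameters into one $\dot X$) and a $\PP$-name $\dot a$, one lets
\[
\dot z=\{(q,\dot y)\mid \exists\dot x\in\ran(\dot a)\ \dot y\text{ is of minimal rank such that }q\Vdash_\sS\varphi(\dot x,\dot y,\dot X)\},
\]
where the minimal-rank trick both bounds $\dot z$ to a set and avoids any appeal to choice; one then checks $\sym(\dot z)\geq\sym(\dot X)\cap\sym(\dot a)\in\F$ and that $p\Vdash_\sS\forall x\in\dot a\,\exists y\in\dot z\,\varphi(x,y,\dot X)$. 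A similar (easier) argument gives Separation with class parameters. You have all the tools for this -- it is the same kind of symmetry computation you performed for $\dot Y$ -- but as written your proposal omits the one verification that does not follow from what you cite, which is precisely where the paper concentrates its effort.
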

\begin{proof}
  The verification of the extension of the symmetric forcing theorem is very much standard (proceeding exactly as for the usual forcing theorem) - see also \cite{symext}. Let us verify that the axiom of Collection holds in $\NN$. So assume $\varphi$ is a first order formula using class parameters, and $p\in\PP$ is such that $p\forces_{\sS}\forall x\,\exists y\,\varphi(x,y,\vec X)$, with $\vec X$ a finite sequence of class $\sS$-names, and let $\dot a\in M$ be a $\PP$-name. Since we may code any finite number of classes by a single class, it suffices to consider a single class $\sS$-name $\dot X\in\mathcal C$, which we may also assume to code the set parameters appearing in~$\varphi$. Let
      $\dot z=\{(q,\dot y)\mid \exists\dot x\in\ran(\dot a)\ \dot y\textrm{ is of minimal rank s.t.  }q\forces_{\sS}\varphi(x,\dot y,\dot X)\}.$
 Then, $\sym(\dot z)\ge\sym(\dot X)\cap\sym(\dot a)\in\F$, and $p\forces_{\sS}\forall x\in\dot a\,\exists y\in\dot z\,\varphi(x,y,\dot X)$, thus witnessing Collection to hold in $\NN$.
  Comprehension, and Class Comprehension, that is, the closure of $\mathcal C[G]$ under definability, are verified in similar ways (and somewhat more easily). We thus leave the details here to our readers.
\end{proof}

\section{Finite support iterations of symmetric systems}\label{section:iterations}

\begin{definition}[Two-step iteration, see e.g.\,\cite{symext}]\label{def:twostep}
Let $\sS = (\mathbb{P}, \sG, \sF)$ be a symmetric system and $\dot \sT = (\dot{\mathbb{Q}}, \dot \sH, \dot \sE)^\bullet$ be an $\sS$-name for a symmetric system, where $\sym(\dot{\sT}) = \sG$. Then, we define the two-step iteration $\sS * \dot \sT = (\mathbb{P} *_{\sS} \dot{\mathbb{Q}}, \sG *_\sS \dot \sH, \sF *_\sS \dot \sE )$, where 

\begin{enumerate}
\item $\mathbb{P} *_{\sS} \dot{\mathbb{Q}}$ consists of all pairs $(p, \dot q)$, where $p \in \mathbb{P}$ and $\dot q$ is an $\sS$-name for an element of $\dot{\mathbb{Q}}$, together with the usual order on $\mathbb{P}* \dot{\mathbb{Q}}$,\footnote{Note that this is a dense subset of the usual forcing iteration $\mathbb{P} * \dot{\mathbb{Q}}$.}
\item $\sG *_\sS \dot \sH$ consists of all pairs $(\pi, \dot \sigma)$, where $\pi \in \sG$ and $\dot \sigma$ is an $\sS$-name for an element of $\dot \sH$, and $(\pi, \dot \sigma)$ is identified with the map $$(p, \dot q) \mapsto (\pi(p), \dot \sigma(\pi(\dot q))),$$ 
\item $\sF *_\sS \dot \sE$ is generated by all groups of the form $(H_0, \dot H_1)$, where $H_0 \in \sF$ and $\dot H_1$ is an $\sS$-name for an element of $\dot \sE$ with $H_0 \leq \sym(\dot H_1)$, and $(H_0, \dot H_1)$ is identified with $\{ (\pi, \dot \sigma) : \pi \in H_0, \Vdash_\sS \dot \sigma \in \dot H_1 \}$.
\end{enumerate}
\end{definition}

A few remarks have to be made about this definition. The fact that we identify pairs with other types of objects should not lead to any confusion. When we write $\dot \sigma(\pi(\dot q))$, what we mean is a particular $\sS$-name for the result of $\dot \sigma$ applied to $\pi(\dot q)$.\footnote{$\pi(\dot q)$ denotes the usual application of $\pi$ to the $\sS$-name $\dot q$. Note that since $\sym(\dot{\mathbb{Q}}) = \sG$, $\pi(\dot q)$ is again an $\sS$-name for an element of $\dot{\mathbb{Q}}$.} While there is in fact a way to uniformly choose such a name (see \cite{symext}), the easiest way to make sense of this, and what we will actually do, is to simply identify a pair $(p, \dot q)$ with the set of equivalent conditions $\{ (p, \dot r) : \dot r\in\HS_\gamma,\,\Vdash_\sS \dot q = \dot r \}$, where $\gamma$ is least so that this set is nonempty. This has the added advantage that $\mathbb{P} *_{\sS} \dot{\mathbb{Q}}$ really becomes a set, while technically, there are proper class many possible names for elements of $\dot{\mathbb{Q}}$. Again, this identification makes no difference in practice.

It is then relatively straightforward to check that $(\pi, \dot{\sigma})$ preserves the order on $\mathbb{P} *_{\sS} \dot{\mathbb{Q}}$. One can compute (\cite[proof of Lemma 3.2]{symext}) that $$(\pi_0, \dot \sigma_0) \circ (\pi_1, \dot \sigma_1) = (\pi_0 \circ \pi_1,\dot\sigma_0 \circ \pi_0(\dot \sigma_1) ),$$ and that $$(\pi, \dot \sigma)^{-1} = (\pi^{-1}, \pi^{-1}(\dot\sigma^{-1})),$$ where $\dot\sigma_0 \circ \pi_0(\dot \sigma_1)$ and $\dot\sigma^{-1}$ are $\sS$-names for the respective objects. In particular, $(\pi, \dot{\sigma})$ is an automorphism, and $\sG *_\sS \dot \sH$ forms a group. 

Now, it is possible to make sense of (3), as it can be checked that the sets $(H_0, \dot H_1)$ are subgroups of $\sG *_\sS \dot \sH = (\sG, \dot \sH)$. It turns out that the filter $\sF *_\sS \dot \sE$ generated by these subgroups is in fact a normal filter. This is a bit more tricky to prove and, letting $\bar\pi=(\pi_0,\dot\pi_1)$, it is generally not the case that $\bar \pi (H_0, \dot H_1) \bar \pi^{-1}$ is a group of the form $(K_0, \dot K_1)$ as in (3) again. On the other hand, if $H_0 \leq \sym(\pi_0^{-1}(\dot \pi_1^{-1}))$, which we may achieve by shrinking $H_0$, then $\bar \pi (H_0, \dot H_1) \bar \pi^{-1} = (\pi_0 H_0 \pi_0^{-1}, \dot \pi_1 \pi_0(\dot H_1) \dot \pi_1^{-1})$ -- see \cite[proof of Lemma 3.2]{symext}.

\begin{lemma}[{\cite[Lemma 3.2]{symext}}]
$\sS * \dot \sT$ is a symmetric system. 
\end{lemma}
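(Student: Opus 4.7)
The plan is to verify in turn the three defining features of a symmetric system for $\sS * \dot\sT$: the preorder structure of $\mathbb{P} *_\sS \dot{\mathbb{Q}}$, the group structure of $\sG *_\sS \dot\sH$ acting by order-automorphisms of that preorder, and the normal-filter property of $\sF *_\sS \dot\sE$. Preorder-hood is inherited from the standard two-step iteration $\mathbb{P} * \dot{\mathbb{Q}}$, so no real work is needed there, and I would focus on the latter two.

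For the group structure, I would first check that each pair $(\pi, \dot\sigma)$, interpreted as $(p, \dot q) \mapsto (\pi(p), \dot\sigma(\pi(\dot q)))$, is an order-preserving bijection of $\mathbb{P} *_\sS \dot{\mathbb{Q}}$. Order preservation uses the symmetric forcing theorem to transport $p \forces_\sS \dot q \le \dot q'$ along $\pi$, together with the fact that $\dot\sigma$ is forced to act as an automorphism of $\dot{\mathbb{Q}}$. Then I would perform the direct computations verifying the composition law $(\pi_0, \dot\sigma_0) \circ (\pi_1, \dot\sigma_1) = (\pi_0 \circ \pi_1, \dot\sigma_0 \circ \pi_0(\dot\sigma_1))$ and the inverse formula $(\pi, \dot\sigma)^{-1} = (\pi^{-1}, \pi^{-1}(\dot\sigma^{-1}))$; these are bookkeeping, and they establish closure of $\sG *_\sS \dot\sH$ under composition and inverses.

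Turning to the filter, I would first verify closure under finite intersection: given generators $(H_0, \dot H_1)$ and $(K_0, \dot K_1)$, the pair $(H_0 \cap K_0, \dot H_1 \cap \dot K_1)$ is again a generator, since $H_0 \cap K_0 \in \sF$ and it forces $\dot H_1 \cap \dot K_1 \in \dot\sE$ by $\dot\sE$ being forced to be a filter; this pair sits inside both original generators. Upward closure is built into the notion of filter generated by a collection of subgroups. What remains, and what I expect to be the genuine difficulty, is normality.

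The obstruction in proving normality, as the excerpt signals, is that $\bar\pi (H_0, \dot H_1) \bar\pi^{-1}$, for $\bar\pi = (\pi_0, \dot\pi_1)$, generally is not of the form $(K_0, \dot K_1)$, because the computation of the second coordinate mixes in $\dot\pi_1$, whose symmetry group need not contain $H_0$. My remedy would be to replace $H_0$ by $H_0' = H_0 \cap \sym(\dot\pi_1) \cap \sym(\pi_0^{-1}(\dot\pi_1^{-1}))$, which still lies in $\sF$ because $\dot\pi_1$ is an $\sS$-name and $\sF$ is a filter; since $H_0' \le H_0$ we have $(H_0', \dot H_1) \le (H_0, \dot H_1)$. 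Then, following \cite[proof of Lemma 3.2]{symext}, a direct calculation using the composition and inverse formulas yields $\bar\pi (H_0', \dot H_1) \bar\pi^{-1} = (\pi_0 H_0' \pi_0^{-1}, \dot\pi_1 \, \pi_0(\dot H_1) \, \dot\pi_1^{-1})$. The first coordinate lies in $\sF$ by normality of $\sF$, and the second is forced (by $\pi_0 H_0' \pi_0^{-1}$) to lie in $\dot\sE$ using that $\dot\sE$ is forced to be normal. Thus the conjugate contains a generator of $\sF *_\sS \dot\sE$, and by upward closure it belongs to the filter, completing the proof.
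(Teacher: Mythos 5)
Your proposal is correct and follows essentially the same route the paper takes (which itself only sketches the argument and defers to the cited reference): verifying the group laws via the stated composition and inverse formulas, and handling normality by shrinking $H_0$ below $\sym(\pi_0^{-1}(\dot\pi_1^{-1}))$ so that the conjugate $\bar\pi(H_0',\dot H_1)\bar\pi^{-1}=(\pi_0 H_0'\pi_0^{-1},\dot\pi_1\,\pi_0(\dot H_1)\,\dot\pi_1^{-1})$ can be computed and seen to contain a generator. The only points worth tightening are that $\sym(\pi_0^{-1}(\dot\pi_1^{-1}))\in\sF$ already uses normality of $\sF$ (not just the filter property), and that one should note the conjugated pair may need a further shrinking of its first coordinate to satisfy the requirement $K_0\leq\sym(\dot K_1)$ of a generator.
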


Moreover, one can prove a factorization theorem (\cite[Theorem 3.3]{symext}) that expresses precisely that an extension via $\sS * \dot \sT$ is of the form $\MM[G]_\sS[H]_{\sT}$ and vice-versa. There is some care to be taken though: It \emph{does not follow} from $G$'s $\PP$-genericity over $\MM$ and $H$'s $\dot \QQ^G$-genericity over $\MM[G]_\sS$, that $G * H = \{ (p, \dot q) \in \PP *_\sS \dot \QQ : p \in G \wedge \dot q^G \in H \}$ is itself $\PP *_\sS \dot \QQ$-generic over $\MM$. Rather, the factorization theorem states that if $G$ is $\sS$-generic (that is, symmetrically generic) over $\MM$ and $H$ is $\dot \sT^G$-generic over $\MM[G]_\sS$, then $G * H$ is also $\sS * \dot \sT$-generic over $\MM$. Conversely, if $K$ is some $\sS * \dot \sT$-generic over $\MM$, then $K = G * H$, where $G = \dom K$ is $\sS$-generic over $\MM$ and $H = \{ \dot q^G : \dot q \in \ran(K) \}$ is $\dot\sT^G$-generic over $\MM[G]_\sS$. In either case, $\MM[G]_\sS[H]_{\dot\sT^G} = \MM[G * H]_{\sS * \dot \sT}$. 

On the level of the models alone, there is no difference between those obtained via full generics or those obtained via symmetric generics. Thus, it is nevertheless the case than an $\sS * \dot \sT$-extension obtained via a full generic is exactly the result of extending in succession using full generics of the respective systems, and vice-versa.

\begin{definition}[Finite support iteration]\label{def:fsi}
Let $\delta$ be an ordinal. Let $$\langle \sS_{\alpha}, \dot \sT_\alpha : \alpha < \delta \rangle = \langle (\mathbb{P}_{\alpha}, \sG_\alpha, \sF_\alpha), (\dot{\mathbb{Q}}_\alpha, \dot \sH_\alpha, \dot \sE_\alpha)^\bullet : \alpha < \delta \rangle,$$ be such that each $\sS_\alpha$ is a symmetric system, $\dot \sT_\alpha$ is an $\sS_\alpha$-name for a symmetric system and $\sym(\dot \sT_\alpha) = \sG_\alpha$. Then we call this sequence a \emph{finite support iteration} of length $\delta$ if for each $\alpha < \delta$:

    \begin{enumerate}
       \item  \begin{enumerate}
        \item $\mathbb{P}_\alpha$ consists of sequences $\bar p = \langle \dot p(\beta) : \beta < \alpha \rangle$, where $\bar p \restriction \beta \in \mathbb{P}_\beta$ and $\dot p(\beta)$ is an $\sS_\beta$ name for an element of $\dot{\mathbb{Q}}_\beta$, for all $\beta < \alpha$.
        \item $\sG_\alpha$ consists of automorphisms of $\mathbb{P}_\alpha$ represented, as detailed below, by sequences $\bar \pi = \langle \dot \pi(\beta) : \beta < \alpha \rangle$, where $\bar \pi \restriction \beta \in \sG_\beta$ and $\dot \pi(\beta)$ is an $\sS_\beta$ name for an element of $\dot \sH_\beta$, for all $\beta < \alpha$.
        \item $\sF_\alpha$ is generated by subgroups of $\sG_\alpha$ represented, as detailed below, by sequences $\bar H = \langle \dot H(\beta) : \beta < \alpha \rangle$,  where $\bar H \restriction \beta \in \sF_\beta$, $\dot H(\beta)$ is an $\sS_\beta$ name for an element of $\dot \sE_\beta$ and $\bar H \restriction \beta \leq \sym_{\sS_\beta}(\dot H(\beta))$, for all $\beta < \alpha$.
        \end{enumerate}
        \item $\sS_{\alpha +1} = \sS_\alpha * \dot{\sT_\alpha}$, where pairs $(\bar p, \dot q)$, $(\bar \pi, \dot \sigma)$, $(\bar H, \dot K)$ as in Definition~\ref{def:twostep} are identified with sequences $\bar p^\frown \dot q$, $\bar \pi^\frown \dot \sigma$ and $\bar H^\frown \dot K$ respectively.
      
      \end{enumerate}
      For $\alpha < \delta$ limit, 
      \begin{enumerate}\setcounter{enumi}{2}
       \item \begin{enumerate}
        \item $\mathbb{P}_\alpha$ consists exactly of those $\bar p$ as above, so that $\Vdash_{\sS_\beta} \dot p(\beta) = \mathbbm{1}$ for all but finitely many $\beta < \alpha$, and $\bar q \leq \bar p$ iff $\bar q \restriction \beta \leq \bar p \restriction \beta$ for each $\beta < \alpha$,
        \item $\sG_\alpha$ consists exactly of those $\bar \pi$ as above, so that $\Vdash_{\sS_\beta} \dot \pi(\beta) = \id$ for all but finitely many $\beta < \alpha$, and $$\bar \pi(\bar p) = \bigcup_{\beta < \alpha} (\bar \pi \restriction \beta) (\bar p \restriction \beta),$$
        \item $\sF_\alpha$ is generated by the subgroups of the form $\bar H = \langle \dot H(\beta) : \beta < \alpha \rangle$ as above, where $\Vdash_{\sS_\beta} \dot H(\beta) = \dot \sH_\beta$ for all but finitely many $\beta < \alpha$, and $\bar \pi \in \bar H$ iff $\bar \pi \restriction \beta \in \bar H \restriction \beta$ for all $\beta < \alpha$.
        \end{enumerate}
    \end{enumerate}
\end{definition}

As it stands, the above is just a definition. But of course, the way to read this in practice is as an instruction on how to recursively construct a finite support iteration. The definition says precisely what to do in each step of the construction. In the successor step, when constructing $\sS_{\alpha +1} = \sS_\alpha * \dot{\sT_\alpha}$, we already know that this results in a symmetric system. On the other hand, to ensure that such a construction always makes sense we still need to check that if the limit step is defined as in (3), we really do obtain a symmetric system again. 

Before doing so, let's make a few simple observations about finite-support iterations. Let $\supp(\bar p) := \{ \alpha : \neg\Vdash_{\sS_\alpha} \dot p(\alpha) = \mathbbm{1}\}$, $\supp(\bar \pi) := \{ \alpha : \neg\Vdash_{\sS_\alpha} \dot \pi(\alpha) = \id\}$ and $\supp(\bar H) := \{ \alpha : \neg\Vdash_{\sS_\alpha} \dot H(\alpha) = \dot\sH_\alpha\}$. We refer to the above as \emph{support} in each case.

\begin{lemma}\label{lem:fsilemma}
Let $\langle \sS_{\alpha}, \dot \sT_\alpha : \alpha < \delta \rangle$ be a finite support iteration as above, $\alpha < \delta$ arbitrary. 
\begin{enumerate}
\item All $\bar p \in \mathbb{P}_\alpha$, $\bar \pi \in \sG_\alpha$, $\bar H \in \sF_\alpha$ have finite support.
    \item For any $\bar p \in \mathbb{P}_\alpha$, $\bar \pi \in \sG_\alpha$, $\supp(\bar p) = \supp(\bar \pi(\bar p))$.
    \item For any $\bar p \in \mathbb{P}_\alpha$, $\bar \pi \in \sG_\alpha$, $\beta \leq \alpha$, $\bar \pi(\bar p) \restriction \beta = (\bar\pi \restriction \beta)(\bar p \restriction \beta)$.
    \item For any $\beta \leq \alpha$, $\bar p \in \mathbb{P}_\beta$, $\bar \pi \in \sG_\beta$ and $\bar H \in \sF_\beta$, we have that $\bar p^\frown \langle \dot{\mathbbm{1}}_\gamma : \gamma \in [\beta, \alpha) \rangle \in \mathbb{P}_\alpha$, $\bar \pi^\frown \langle \dot{\id}_\gamma : \gamma \in [\beta, \alpha) \rangle \in \sG_\alpha$, $\bar H^\frown \langle \dot{\sH}_\gamma : \gamma \in [\beta, \alpha) \rangle \in \sF_\alpha$, where $\dot{\mathbbm{1}}_\gamma$ is a name for the trivial condition of $\dot{\mathbb{Q}}_\gamma$ and $\dot{\id}_\gamma$ for the identity in $\dot{\sH}_\gamma$. In particular, $\mathbb{P}_\beta = \{ \bar p \restriction \beta : \bar p \in \mathbb{P}_\alpha \}$.
    \item For any $\bar \pi, \bar \sigma \in \sG_\alpha$, $\bar \pi \circ \bar \sigma = \langle \dot \pi(\beta) \circ (\bar \pi \restriction \beta)(\dot \sigma(\beta)) : \beta < \alpha\rangle$ and $\supp(\bar \pi \circ \bar \sigma) \subseteq \supp(\bar \pi) \cup \supp(\bar \sigma)$. In particular, $(\bar \pi \circ \bar \sigma) \restriction \beta = (\bar \pi \restriction \beta) \circ( \bar \sigma \restriction \beta)$, for all $\beta \leq \alpha$.
    \item For any $\bar \pi \in \sG_\alpha$, $\bar \pi ^{-1} = \langle (\bar\pi \restriction \beta)^{-1}(\dot \pi(\beta)^{-1}) : \beta < \alpha\rangle$ and $\supp(\bar \pi^{-1}) = \supp(\bar \pi)$. In particular, $(\bar \pi^{-1}) \restriction \beta = (\bar \pi \restriction \beta)^{-1}$, for all $\beta \leq \alpha$.
    \item For any of the generators $\bar H, \bar K \in \sF_\alpha$ as in (1)(c) in the definition of a finite support iteration, $\bar E = \langle\dot E(\beta) : \beta \leq \alpha \rangle \in \sF_\alpha$, where $\dot E(\beta)$ is a name for $\dot H(\beta) \cap \dot K(\beta)$ for each $\beta < \alpha$, $\supp(\bar E) \subseteq \supp(\bar H) \cup \supp(\bar K)$ and $\bar E \leq \bar H \cap \bar K$.
    \item For each $\bar p \in \PPP_\alpha$, $\bar \pi \in \sG_\alpha$ there are $\bar H, \bar K \in \sF_\alpha$ so that $\bar H \restriction \beta \leq \sym(\dot p(\beta))$ and $\bar K \restriction \beta \leq \sym((\bar\pi\restriction\beta)^{-1}(\dot \pi(\beta)^{-1}))$ for each $\beta < \alpha$.
    \item For any $\bar \pi \in \sG_\alpha$ and $\bar H \in \sF_\alpha$, where for each $\beta<\alpha$, $$\bar H \restriction \beta \leq \sym((\bar \pi \restriction \beta)^{-1}(\dot \pi(\beta)^{-1}) ),$$ we have that $$\bar \pi \bar H \bar \pi^{-1} = \langle \dot \pi(\beta) (\pi\restriction \beta)(\dot H(\beta)) \dot \pi(\beta)^{-1} : \beta < \alpha \rangle$$ and $\supp(\bar \pi \bar H \bar \pi^{-1}) = \supp(\bar H).$
    \item If $\langle \mathbb{P}'_\beta, \dot{\mathbb{Q}}_\beta : \beta \leq \alpha \rangle$ is the usual finite-support iteration of forcing notions, then $\mathbb{P}_\alpha$ is a dense subposet of $\mathbb{P}'_\alpha$.
\end{enumerate}
\end{lemma}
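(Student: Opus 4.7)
The plan is to proceed by induction on $\alpha \leq \delta$, treating successor and limit stages separately. At a successor stage $\alpha = \gamma+1$, every $\bar p$, $\bar\pi$, $\bar H$ decomposes as $\bar x^\frown \dot y$ with $\bar x$ at stage $\gamma$ and $\dot y$ an $\sS_\gamma$-name for an element of the respective component of $\dot\sT_\gamma$, so that each claim reduces to the two-step version (essentially contained in \cite[Lemma 3.2]{symext} and the discussion following Definition~\ref{def:twostep}) combined with the inductive hypothesis applied to $\bar x$. At a limit stage, Definition~\ref{def:fsi}(3) explicitly imposes finite support, and composition, inverse, and conjugation are defined coordinatewise, so the claims reduce to finitely many verifications from the inductive hypothesis at coordinates $\beta < \alpha$. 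In particular, item~(1) is immediate from the construction; items~(2) and~(3) follow from the two-step action formula $(\pi,\dot\sigma)\cdot(p,\dot q)=(\pi(p),\dot\sigma(\pi(\dot q)))$ preserving the triviality of $\dot q$ (since $\dot\sigma$ is forced to be an automorphism); items~(5) and~(6) are direct computations using the composition and inverse formulas recalled after Definition~\ref{def:twostep}, extended coordinatewise; item~(4) is built into Definition~\ref{def:fsi} via trivial extensions; and item~(7) uses that $\sym(\dot H(\beta)\cap\dot K(\beta))\geq\sym(\dot H(\beta))\cap\sym(\dot K(\beta))$, with the support bound clear and the compatibility condition for $\bar E$ following from the inductive hypothesis applied to $\bar H\restriction\beta$ and $\bar K\restriction\beta$.

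For item~(8), since $\bar p$ and $\bar\pi$ both have finite support by~(1), I would set $\dot H(\beta)$ and $\dot K(\beta)$ to be canonical names for $\dot\sH_\beta$ outside the respective supports, while on the finitely many nontrivial coordinates I would choose generators witnessing $\sym(\dot p(\beta)) \in \sF_\beta$ and $\sym((\bar\pi\restriction\beta)^{-1}(\dot\pi(\beta)^{-1})) \in \sF_\beta$, then intersect them using~(7) and the inductive hypothesis to maintain the compatibility condition $\bar H\restriction\beta\leq\sym(\dot H(\beta))$ at each coordinate. Item~(9) is a direct calculation mirroring the two-step conjugation formula discussed after Definition~\ref{def:twostep}: the hypothesis on $\bar H$ is exactly what is needed for the displayed sequence to define an element of $\sG_\alpha$, and one then checks coordinatewise that it implements the conjugation; the support claim holds because at a coordinate outside $\supp(\bar H)$ the conjugation acts on $\dot\sH_\beta$ as the identity.

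The main obstacle is item~(10). Density is again proved by induction on $\alpha$: at a successor stage $\alpha = \gamma+1$, one uses that $\PP_\gamma *_{\sS_\gamma} \dot{\mathbb{Q}}_\gamma$ is dense in the usual two-step iteration $\PP_\gamma * \dot{\mathbb{Q}}_\gamma$ (per the footnote to Definition~\ref{def:twostep}), combined with the inductive hypothesis that $\PP_\gamma$ is dense in $\PP'_\gamma$; at a limit stage, any condition in $\PP'_\alpha$ has finite support, and each nontrivial coordinate can be refined using the inductive hypothesis at the relevant $\beta < \alpha$. The technical subtlety is replacing an arbitrary $\PP_\beta$-name for an element of $\dot{\mathbb{Q}}_\beta$ by an $\sS_\beta$-name below a given condition, which I would handle by a standard mixing argument exploiting the fact that $\sym(\dot{\mathbb{Q}}_\beta) = \sG_\beta$ by hypothesis, so that $\sS_\beta$-names for elements of $\dot{\mathbb{Q}}_\beta$ are cofinal among $\PP_\beta$-names on dense sets.
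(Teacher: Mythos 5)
Your proposal is correct and takes essentially the same route as the paper's proof, which is just the induction on $\alpha$ you describe: reduce successor stages to the two-step composition/inverse/conjugation formulas for (5), (6) and (9), handle limit stages coordinatewise using finite supports, and note that the limit case of (10) is trivial for the same reason. The only place you overcomplicate matters is the density step in (10): no mixing argument is needed, since any $\PP_\beta$-name forced to lie in $\dot{\mathbb{Q}}_\beta$ is, densely often, forced equal to a name appearing inside $\dot{\mathbb{Q}}_\beta$, and such names are automatically hereditarily symmetric because $\dot{\mathbb{Q}}_\beta$ is an $\sS_\beta$-name (this is exactly the content of the footnote to Definition~\ref{def:twostep}).
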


\begin{proof}
These are all straightforward inductions on $\alpha$. For (5) and (6), use the inverse and composition formulas we have already given for the two-step iteration. For (9) use the analogous statement for two-step iterations we have mentioned above. Note that since our conditions have finite supports, limit stages in (10) are trivial.
\end{proof}

\begin{lemma}
Let $\langle \sS_\beta , \dot{\sT_\beta} : \beta < \alpha \rangle$ be a finite-support iteration and let $\sS_\alpha = (\mathbb{P}_\alpha, \sG_\alpha, \sF_\alpha)$ be defined as in (3) of Definition~\ref{def:fsi}. Then, $\sS_\alpha$ is a symmetric system.
\end{lemma}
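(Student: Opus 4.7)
The plan is to induct on $\alpha$. The base case $\alpha = 0$ gives the trivial symmetric system, and the successor case $\alpha = \beta+1$ is handled by the preceding two-step iteration lemma applied to $\sS_\beta * \dot\sT_\beta$, after identifying the encoded pairs with their flattened sequences. The only nontrivial case is $\alpha$ a limit ordinal, and I fix such $\alpha$ under the inductive hypothesis that every $\sS_\beta$, $\beta<\alpha$, is a symmetric system. I then need to verify three things: that $(\PP_\alpha, \leq)$ is a preorder, that $\sG_\alpha$ is a group of automorphisms of $\PP_\alpha$, and that $\sF_\alpha$ is a normal filter on the subgroups of $\sG_\alpha$.

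The first two verifications are largely bookkeeping on top of the inductive hypothesis, supported by Lemma~\ref{lem:fsilemma}. The preorder axioms for $\PP_\alpha$ reduce coordinate-wise to those of each $\PP_\beta$. For the group structure, given $\bar\pi \in \sG_\alpha$ and $\bar p \in \PP_\alpha$, the union $\bar\pi(\bar p) := \bigcup_{\beta<\alpha}(\bar\pi \restriction \beta)(\bar p \restriction \beta)$ is a coherent sequence of length $\alpha$ by Lemma~\ref{lem:fsilemma}(3) applied inductively, has finite support contained in $\supp(\bar\pi) \cup \supp(\bar p)$, and is order preserving and bijective coordinate-wise. Closure of $\sG_\alpha$ under composition and inversion follows from the explicit formulas of Lemma~\ref{lem:fsilemma}(5) and (6), which in particular show that the resulting sequences still have finite support and that the group operations are compatible with truncation.

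The heart of the proof is showing that $\sF_\alpha$ is a normal filter. The generators from clause (3)(c) of Definition~\ref{def:fsi} are easily seen to be subgroups of $\sG_\alpha$, upward closure is built into "generated by", and closure under intersection follows from Lemma~\ref{lem:fsilemma}(7). The delicate point is normality: given $\bar\pi \in \sG_\alpha$ and a generator $\bar H \in \sF_\alpha$, I want $\bar\pi \bar H \bar\pi^{-1} \in \sF_\alpha$. Mirroring the two-step iteration argument, I use Lemma~\ref{lem:fsilemma}(8) to find a generator $\bar K \in \sF_\alpha$ with $\bar K \restriction \beta \leq \sym_{\sS_\beta}((\bar\pi \restriction \beta)^{-1}(\dot\pi(\beta)^{-1}))$ for all $\beta < \alpha$, and set $\bar E := \bar H \cap \bar K$, which by Lemma~\ref{lem:fsilemma}(7) is again a generator, satisfies $\bar E \leq \bar H$, and inherits the needed symmetry condition against $\bar\pi$. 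Lemma~\ref{lem:fsilemma}(9) then expresses $\bar\pi \bar E \bar\pi^{-1}$ coordinate-wise; it remains to check that this is itself a generator. Each coordinate is forced to lie in $\dot\sE_\beta$ using the inductive hypothesis that $\dot\sT_\beta$ is forced to be a symmetric system (so $\dot\sE_\beta$ is forced to be a normal filter), and the truncation-symmetry condition propagates via the two-step case already handled at stage~$\beta$. Since $\bar\pi \bar E \bar\pi^{-1} \leq \bar\pi \bar H \bar\pi^{-1}$, the latter lies in $\sF_\alpha$ by upward closure.

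The main obstacle is this normality verification: one has to interleave the inductive hypothesis, forcing-theoretic reasoning about the normality of each $\dot\sE_\beta$, and the truncation-symmetry conditions simultaneously. What makes it tractable is Lemma~\ref{lem:fsilemma}(8), which guarantees that any generator can be shrunk enough for the conjugation formula of Lemma~\ref{lem:fsilemma}(9) to apply cleanly, reducing the limit-stage verification to the already-handled two-step case one coordinate at a time. The finite support hypothesis is what prevents any coherence problem at the limit.
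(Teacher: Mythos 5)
Your proof is correct and follows essentially the same route as the paper: the poset and automorphism-group structure are verified via items (1)--(3), (5), (6) of Lemma~\ref{lem:fsilemma}, and normality of $\sF_\alpha$ via items (7)--(9). The only cosmetic difference is in the normality step, where the paper bounds $\supp(\bar \pi)$ by some $\beta\le\alpha$ and shrinks the generator $\bar H$ only below $\beta$ (concatenating with its unchanged tail), while you shrink it on all coordinates by intersecting with the group supplied by item (8); both reduce to the same conjugation formula of item (9) and conclude by upward closure.
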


\begin{proof}
$\mathbb{P}_\alpha$ is clearly a forcing poset. Next, let $\bar \pi \in \sG_\alpha$ and $\bar p \in \mathbb{P}_\alpha$ be given. According to Item (3) of Lemma~\ref{lem:fsilemma}, $(\bar \pi \restriction \beta) (\bar p \restriction \beta) \subseteq (\bar \pi \restriction \gamma) (\bar p \restriction \gamma)$ for every $\beta \leq \gamma < \alpha$, so $\bar \pi(\bar p)$ is a sequence as in (1)(a) of Definition~\ref{def:fsi}. Items (1) and (2) of Lemma~\ref{lem:fsilemma} imply that $\supp(\bar \pi(\bar p))$ is still finite, so $\bar \pi(\bar p) \in \mathbb{P}_\alpha$. Clearly, $\bar \pi$ is also order-preserving and the inverse and composition formulas given in (5) and (6) above also work for the elements of $\sG_\alpha$. Thus, $\sG_\alpha$ is a group of automorphisms, and similarly, any $\bar H$ as in (3)(c) of Definition~\ref{def:fsi} is a subgroup of $\sG_\alpha$. It remains to check that $\sF_\alpha$ is normal. So let $\bar H \in \sF_\alpha$, $\bar \pi \in \sG_\alpha$ be arbitrary. $\supp(\bar \pi) \subseteq \beta$ for some $\beta \leq \alpha$ and using (7) and (8) above we can find $\bar K \in \sF_\beta$, $\bar K \leq \bar H \restriction \beta$, so that $\bar K \restriction \gamma \leq \sym((\bar\pi\restriction\gamma)^{-1}(\dot \pi(\gamma)^{-1}))$ for each $\gamma < \beta$. Then $\bar H' := \bar K^\frown \bar H \restriction [\beta, \alpha) \leq \bar H$ and $\bar H' \in \sF_\alpha$. From (9), also using (5), we can compute that $$\bar \pi \bar H' \bar \pi^{-1} = \langle \dot \pi(\gamma) (\pi\restriction \gamma)(\dot H'(\gamma)) \dot \pi(\gamma)^{-1} : \gamma < \alpha \rangle,$$ which has finite support and thus is in $\sF_\alpha$.
\end{proof}

\begin{lemma}\label{lem:equalautom}
Let $\langle \sS_{\alpha}, \dot \sT_\alpha : \alpha < \delta \rangle$ be a finite support iteration as above. Fix some $\alpha < \delta$, let $\bar \pi, \bar \sigma \in \sG_\alpha$, $\bar p \in \mathbb{P}_\alpha$, and assume that for all $\beta < \alpha$, $\bar p \restriction \beta \Vdash \dot \pi(\beta) = \dot \sigma(\beta).$ Then, for any $\mathbb{P}_\alpha$-name $\dot x$, $$\bar p \Vdash \bar \pi(\dot x) = \bar \sigma(\dot x).$$  
\end{lemma}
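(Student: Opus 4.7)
The plan is to prove, by a single induction on $\beta \leq \alpha$, the conjunction of two statements: (a) for every $\PP_\beta$-name $\dot x$, $\bar p \restriction \beta \Vdash (\bar \pi \restriction \beta)(\dot x) = (\bar \sigma \restriction \beta)(\dot x)$; and (b) for every $\bar q \in \PP_\beta$, $\bar p \restriction \beta \Vdash (\bar \pi \restriction \beta)(\bar q) \in \dot G_\beta \leftrightarrow (\bar \sigma \restriction \beta)(\bar q) \in \dot G_\beta$, where $\dot G_\beta$ denotes the canonical name for the $\PP_\beta$-generic filter. The desired conclusion is clause (a) at $\beta = \alpha$. The reason for introducing the auxiliary clause (b) is the main conceptual point: a condition in $\PP_\beta$ itself carries an $\sS_\gamma$-name at every coordinate $\gamma < \beta$, so the inductive treatment of names cannot be cleanly separated from that of conditions.

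At each stage $\beta$ I would first establish (b), and then derive (a) by an inner induction on the rank of $\dot x$. For (b) at a successor $\beta = \gamma + 1$, decompose $\bar q$ as the concatenation of some $\bar q' \in \PP_\gamma$ with $\dot q(\gamma)$. Lemma~\ref{lem:fsilemma}(3) gives $(\bar \pi \restriction \beta)(\bar q) \restriction \gamma = (\bar \pi \restriction \gamma)(\bar q')$, and analogously for $\bar \sigma$, so the IH (b) at $\gamma$ handles the initial segment. For the $\gamma$-th coordinate, the IH (a) at $\gamma$ applied to the $\PP_\gamma$-name $\dot q(\gamma)$ yields $\bar p \restriction \gamma \Vdash (\bar \pi \restriction \gamma)(\dot q(\gamma)) = (\bar \sigma \restriction \gamma)(\dot q(\gamma))$; combined with the hypothesis $\bar p \restriction \gamma \Vdash \dot \pi(\gamma) = \dot \sigma(\gamma)$, it follows that the $\gamma$-th coordinates of $(\bar \pi \restriction \beta)(\bar q)$ and $(\bar \sigma \restriction \beta)(\bar q)$ are forced equal as elements of $\dot{\mathbb Q}_\gamma$, which completes (b) at $\beta$. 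For limit $\beta$, (b) reduces to (b) at a sufficiently large $\gamma < \beta$ by finite support of $\bar q$, $\bar \pi$, and $\bar \sigma$, together with the fact that a condition $\bar r \in \PP_\beta$ with $\supp(\bar r) \subseteq \gamma$ belongs to $G_\beta$ iff $\bar r \restriction \gamma \in G_\gamma$.

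For clause (a) at $\beta$, given $\dot x = \{(\bar q_i, \dot y_i) : i \in I\}$, the rank-IH yields $\bar p \restriction \beta \Vdash (\bar \pi \restriction \beta)(\dot y_i) = (\bar \sigma \restriction \beta)(\dot y_i)$ for each $i$, while the just-established (b) at $\beta$ yields the equivalence of the condition memberships. Hence, evaluating in any generic $G \ni \bar p \restriction \beta$, the sets $\{(\bar \pi \restriction \beta)(\dot y_i)^G : (\bar \pi \restriction \beta)(\bar q_i) \in G\}$ and $\{(\bar \sigma \restriction \beta)(\dot y_i)^G : (\bar \sigma \restriction \beta)(\bar q_i) \in G\}$ coincide, giving (a). The main obstacle is organizational rather than conceptual: coupling the name-recursion with the condition-recursion into one well-founded induction, and carefully tracking restrictions and compositions of automorphisms via repeated appeal to Lemma~\ref{lem:fsilemma}. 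No new ideas beyond those already developed in the section are required.
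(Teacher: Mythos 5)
Your proposal is correct and takes essentially the same route as the paper's (very terse) proof: work below $\bar p$ and show by induction on $\beta\le\alpha$ that $(\bar\pi\restriction\beta)$ and $(\bar\sigma\restriction\beta)$ send conditions into the generic filter equivalently, then conclude for names by induction on rank. Your only refinement is to make explicit that the condition statement and the name statement must be proved simultaneously (the successor step for conditions uses the name statement at the earlier stage via $\dot q(\gamma)$), which the paper's one-line sketch leaves implicit.
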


\begin{proof}
This is essentially the same as \cite[Lemma 5.5]{symext}. More precisely, work with a generic $G \ni \bar p$ and show by induction on $\beta\le\alpha$ that for any $\bar q$, $(\bar \pi\restriction\beta)(\bar q\restriction\beta) \in G$ iff $(\bar \sigma\restriction\beta)(\bar q \restriction\beta) \in G$. The rest then follows by induction on the rank of $\dot x$. 
\end{proof}

A factorization theorem can also be proven for finite support iterations, similarly to the one for two-step iterations. We will not need this anywhere in our results, so the reader may immediately skip to the next section, but it is still important enough for the general theory so that we would like to include it.

Suppose that $\langle \sS_{\alpha}, \dot \sT_\alpha : \alpha \leq \delta \rangle$ is a finite support iteration and $\alpha \leq \delta$ is fixed. By recursion on the length $\delta$, one defines an $\sS_{\alpha}$-name $\langle \dot \sS_{\alpha, \gamma}, \dot \sT_{\alpha, \gamma} : \gamma \in [\alpha, \delta] \rangle^\bullet$ for a finite support iteration that naturally corresponds to the tail of the iteration. Simultaneously, one defines for each $\sS_\delta$-name $\dot x$, an $\sS_\alpha$-name $[\dot x]_{\alpha, \delta}$ for an $\dot \sS_{\alpha, \delta}$-name, and similarly, for each $\sS_\alpha$-name $\dot y$ for an $\dot \sS_{\alpha, \delta}$-name, an $\sS_\delta$-name $]\dot y[_{\alpha, \delta}$. Further, for any $\bar p \in \mathbb{P}_\delta$, $\bar \pi \in \sG_\delta$ and $\bar H \in \sF_\delta$, one defines $\sS_\alpha$-names $[\bar p \restriction [\alpha, \delta)]$, $[\bar \pi \restriction [\alpha, \delta)]$ and $[\bar H \restriction [\alpha, \delta)]$ for respective objects in the system $\dot \sS_{\alpha, \delta}$. 

The recursive construction proceeds as follows: For $\delta = \alpha$, we let $\dot \sS_{\alpha, \gamma}$ be a name for the trivial system $(\{ \mathbbm{1}\}, \{\id \}, \{ \{\id \} \})$. $[\bar p \restriction [\alpha, \alpha)]$ is simply a name for $\mathbbm{1}$. At each step $\delta$, by recursion on the rank of names, we define $$[\dot x]_{\alpha, \delta} = \{ (\bar p \restriction \alpha, ([\bar p \restriction [\alpha, \delta)], [\dot z]_{\alpha, \delta})^\bullet) : (\bar p, \dot z) \in \dot x \},$$ and similarly, $$] \dot y[_{\alpha, \delta} = \{ (\bar p, ]\dot z[_{\alpha, \delta}) : \bar p \restriction \alpha \Vdash_{\sS_\alpha} ([\bar p \restriction [\alpha, \delta)], \dot z)^\bullet \in \dot y \}.$$

We let $\dot \sT_{\alpha, \delta} = [\dot \sT_\delta]_{\alpha, \delta}$. For $\delta = \gamma +1$, we define $$[\bar p \restriction [\alpha, \delta)] = ([\bar p \restriction [\alpha, \gamma)]^\frown [\dot p(\gamma)]_{\alpha, \gamma}))^\bullet,$$ and for $\delta$ limit, $$[\bar p \restriction [\alpha, \delta)] = \bigcup_{\gamma \in [\alpha, \delta)} [\bar p \restriction [\alpha, \gamma)].$$ Similarly for $[\bar \pi \restriction [\alpha, \delta)]$ and $[\bar H \restriction [\alpha, \delta)]$.

The properties claimed above are easily checked by induction on $\delta$. 

\begin{theorem}[Factorization for finite support iterations]
Whenever $G$ is $\sS_\alpha$-generic over $\MM$ and $H$ is $\dot \sS_{\alpha, \delta}^G$-generic over $\MM[G]_{\sS_\alpha}$, then $G * H = \{ \bar p : \bar p \restriction \alpha \in G \wedge [\bar p \restriction [\alpha, \delta)]^G \in H\}$ is $\sS_{\delta}$-generic over $\MM$. Similarly, whenever $K$ is $\sS_\delta$-generic over $\MM$, then $K = G * H$, where $G = \{ \bar p \restriction \alpha : \bar p \in K \}$ is $\sS_\alpha$-generic over $\MM$ and $H = \{ [\bar p \restriction [\alpha, \delta)]^G : \bar p \in K \}$ is $\dot \sS_{\alpha, \delta}^G$-generic over $\MM[G]_{\sS_\alpha}$.

In either case, $([\dot x]^G)^H = \dot x^{G * H}$ for every $\sS_\delta$-name $\dot x$ and $]\dot y[^{G*H} = (\dot y^G)^H$ for every $\sS_\alpha$-name $\dot y$ for an $\dot \sS_{\alpha, \delta}$-name. In particular, $\MM[G*H]_{\sS_\delta} = \MM[G]_{\sS_\alpha}[H]_{\dot \sS_{\alpha, \delta}^G}$.

\end{theorem}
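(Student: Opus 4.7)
The plan is to induct on $\delta \geq \alpha$. The base case $\delta = \alpha$ is trivial: $\dot \sS_{\alpha, \alpha}$ names the trivial system, $G * H = G$, and the evaluation identities degenerate to equalities.

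For the successor step $\delta = \gamma + 1$, the key observation is that $\sS_\delta = \sS_\gamma * \dot \sT_\gamma$ and that, by the recursive construction preceding the theorem, $\dot \sS_{\alpha, \delta}$ is a name for $\dot \sS_{\alpha, \gamma} * \dot \sT_{\alpha, \gamma}$. Starting from an $\sS_\delta$-generic $K$, I would apply the two-step factorization theorem in $\MM$ to split $K = K_0 * K_1$, where $K_0$ is $\sS_\gamma$-generic and $K_1$ is $\dot \sT_\gamma^{K_0}$-generic. The induction hypothesis gives $K_0 = G * H_0$. Finally, the two-step factorization theorem inside $\MM[G]_{\sS_\alpha}$, applied to $\dot \sS_{\alpha, \gamma} * \dot \sT_{\alpha, \gamma}$, combines $H_0$ and $K_1$ into a single $\dot \sS_{\alpha, \delta}^G$-generic $H$. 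The converse runs the same three steps in reverse.

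For the limit step, the essential tool is the finite-support structure from Lemma~\ref{lem:fsilemma}(1). Given a symmetric dense $D \subseteq \PP_\delta$ in $\MM$ with witness $\bar H \in \sF_\delta$, and a condition $\bar p \in \PP_\delta$, I would choose $\beta < \delta$ bounding $\supp(\bar p) \cup \supp(\bar H)$. Then $D_\beta := \{ \bar q \in \PP_\beta : \bar q^\frown \langle \dot{\mathbbm{1}}_\gamma : \gamma \in [\beta, \delta) \rangle \in D \}$ is dense below $\bar p \restriction \beta$ in $\PP_\beta$ and symmetric with respect to $\bar H \restriction \beta \in \sF_\beta$; the inductive hypothesis at $\beta$ then yields a condition in $G * H$ meeting $D_\beta$, and hence $D$. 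The converse decomposition of an $\sS_\delta$-generic $K$ into $G = \{ \bar p \restriction \alpha : \bar p \in K \}$ and $H = \{ [\bar p \restriction [\alpha, \delta)]^G : \bar p \in K \}$ is analogous: each tail-symmetric dense set over $\MM[G]_{\sS_\alpha}$ is pulled back to a symmetric dense set at some intermediate level $\beta \in [\alpha, \delta)$ where the inductive hypothesis applies.

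Once genericity is established, the evaluation identities $([\dot x]^G)^H = \dot x^{G*H}$ and $]\dot y[^{G*H} = (\dot y^G)^H$ follow by induction on the rank of names, directly unwinding the recursive definitions of $[\cdot]_{\alpha, \delta}$ and $]\cdot[_{\alpha, \delta}$; the model identity $\MM[G*H]_{\sS_\delta} = \MM[G]_{\sS_\alpha}[H]_{\dot \sS_{\alpha, \delta}^G}$ is then immediate. The main obstacle I expect is the limit-step genericity transfer, where one must verify carefully that a symmetry witness for $D$ in $\sF_\delta$ really does restrict to a symmetry witness for $D_\beta$ in $\sF_\beta$, and dually, that tail-symmetric dense sets over $\MM[G]_{\sS_\alpha}$ admit the required pullback. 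Both reduce to careful bookkeeping with the recursive definitions of the supports and the $[\cdot], ]\cdot[$ operators, rather than to genuine conceptual difficulties.
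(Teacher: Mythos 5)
Your successor step (composing two applications of the two-step factorization theorem) is workable in spirit, but your limit step contains a genuine gap. The set $D_\beta=\{\bar q\in\PP_\beta:\bar q^\frown\langle\dot{\mathbbm{1}}_\gamma:\gamma\in[\beta,\delta)\rangle\in D\}$ need \emph{not} be dense below $\bar p\restriction\beta$ --- it can be empty, even for open dense $D$ with $\sym(D)=\sG_\delta$. For instance, fix $\beta_0\in[\beta,\delta)$ with $\dot{\mathbb{Q}}_{\beta_0}$ forced nontrivial and separative (as for all iterands in this paper), and let $D$ be the set of all $\bar r\in\PP_\delta$ such that $\bar r\restriction\beta_0\Vdash\dot r(\beta_0)\neq\mathbbm{1}$: this $D$ is open, dense, and invariant under all of $\sG_\delta$, yet no condition whose coordinates on $[\beta,\delta)$ are the trivial names belongs to it. Finiteness of supports bounds each individual condition, but it does not bound where a dense set ``lives'': extending into $D$ may force nontrivial activity on coordinates in $[\beta,\delta)$, and since your $\beta$ is chosen in advance from $\supp(\bar p)\cup\supp(\bar H)$ alone, no choice of $\beta$ repairs this. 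A second problem in the same step: even granting density of $D_\beta$, the inductive hypothesis at $\beta$ concerns $G*H'$ for an $\dot{\sS}_{\alpha,\beta}^{G}$-generic $H'$, and you have not produced such an $H'$ from $H$; restricting a symmetric generic for the tail $[\alpha,\delta)$ to the tail $[\alpha,\beta)$ is itself a factorization-type fact requiring proof (the paper explicitly warns that genericity does not pass through such decompositions for free). The ``pullback to an intermediate level $\beta$'' in your converse direction fails for the same reason.

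The paper's argument avoids initial-segment reductions altogether: given dense $D\subseteq\PP_\delta$ with $\bar H\leq\sym(D)$, one shows by induction on $\delta$ that $[D]_{\alpha,\delta}=\{(\bar p\restriction\alpha,[\bar p\restriction[\alpha,\delta)]):\bar p\in D\}$ is an $\sS_\alpha$-name for an open dense subset of the tail forcing $\dot{\PP}_{\alpha,\delta}$, with $\bar H\restriction\alpha\leq\sym([D]_{\alpha,\delta})$ and $\Vdash_{\sS_\alpha}[\bar H\restriction[\alpha,\delta)]\leq\sym([D]_{\alpha,\delta})$; the genericity of $H$ over $\MM[G]_{\sS_\alpha}$ then directly produces $\bar p\in D\cap(G*H)$. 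Dually, a name $\dot D$ for a symmetric dense subset of the tail is pulled back to the symmetric dense set $]\dot D[_{\alpha,\delta}\subseteq\PP_\delta$, which $K$ meets. So the dense set is transported between the full iteration and the full tail, never truncated at an intermediate stage. Note also that your successor step already uses the evaluation identity (to know that $[\dot\sT_\gamma]_{\alpha,\gamma}$ evaluates in $\MM[G]_{\sS_\alpha}[H_0]$ to $\dot\sT_\gamma^{K_0}$), so the identities $([\dot x]^G)^H=\dot x^{G*H}$ must be carried along inside the induction rather than deferred until after genericity is established.
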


\begin{proof}
    
    If $D \subseteq \mathbb{P}_\delta$ is open dense, show by induction on $\delta$ that the set $$[D]_{\alpha, \delta} = \{ (\bar p\restriction\alpha , [\bar p \restriction [\alpha, \delta)]) : \bar p \in D\}$$ is an $\sS_{\alpha}$-name for an open dense subset of the forcing $\dot{\mathbb{P}}_{\alpha, \delta}$ corresponding to $\dot \sS_{\alpha, \delta}$. Moreover, if $\bar H \leq \sym(D)$, then $\bar H \restriction \alpha \leq \sym([D]_{\alpha, \delta})$ and $\Vdash_{\sS_\alpha} [\bar H \restriction [\alpha, \delta)] \leq \sym( [D]_{\alpha, \delta})$. This is how we check that $G * H$ is $\sS_\delta$-generic, given the genericity of $G$ and $H$. 
    
    The other direction, starting from an $\sS_\delta$-generic and obtaining the genericity of $G$ and $H$, is completely analogous: From a name $\dot D$ for an open dense subset of $\dot{\mathbb{P}}_{\alpha, \delta}$, define $$]\dot D[_{\alpha, \delta} = \{ \bar p : \bar p \restriction \alpha \Vdash_{\sS_\alpha} [\bar p \restriction [\alpha, \delta)] \in \dot D\}.$$ Everything else is just as straightforward and similar to \cite[Theorem 3.3]{symext}.
\end{proof}

\section{The symmetric extension}\label{section:extension}

\begin{definition}
  Given a forcing notion $\mathbb{R}$, we let $\sT(\mathbb{R})=(\QQ,\sH,\sE)$ denote the symmetric system where 
  
  \begin{itemize}
  \item $\QQ$ is the finite support product of $\omega$-many copies of $\mathbb{R}$, i.e., $\mathbb{Q}$ consists of finite partial functions $p \colon \omega \to \mathbb{R}$ together with the extension relation given by $q \leq p$ iff $\dom p \subseteq \dom q$ and $$ \forall n \in \dom p\ q(n) \leq p(n),$$
  \item $\sH$ is the group of finitary permutations of $\omega$,\footnote{A permutation $\pi \colon \omega \to \omega$ is \emph{finitary} if $\pi(n) = n$ for all but finitely many $n \in \omega$.} where $\pi \in \sH$ acts on $\QQ$ coordinate-wise, i.e., $\dom \pi(q) = \pi''\dom q$ and $$\pi(q)(n) = q(\pi^{-1}(n))$$ for every $n \in \omega$,
  \item $\sE$ is generated by the subgroups of $\sH$ of the form $$\fix(e) = \{ \pi \in \sH : \forall n \in e\ \pi(n) = n\},$$ for $e \in [\omega]^{<\omega}$.
  \end{itemize}

\end{definition}

To see that $\sE$ is normal, simply note that for any $\pi\in\sH$, $\pi \fix(e) \pi^{-1} = \fix(\pi''e)$.
The system for the basic Cohen model (see \cite[Section 5.3]{jech}) is exactly $\sT(\mathbb{C})$, where $\mathbb{C}$ is Cohen forcing.

\medskip

We will construct a finite support symmetric iteration of the form $$\langle \sS_{\alpha}, \dot \sT_\alpha : 1 \leq \alpha \leq \omega_1 \rangle = \langle (\mathbb{P}_{\alpha}, \sG_\alpha, \sF_\alpha), (\dot{\mathbb{Q}}_\alpha, \dot \sH_\alpha, \dot \sE_\alpha)^\bullet : 1 \leq \alpha \leq \omega_1 \rangle,$$ where for each $\alpha < \omega_1$, $\dot \sT_\alpha$ is an $\sS_\alpha$-name for a symmetric system of the form $\sT(\dot{\mathbb{R}}_\alpha)$, where $\dot{\mathbb{R}}_\alpha$ is an $\sS_\alpha$-name for a forcing notion. We start by letting $\sS_1$ be the basic Cohen system, i.e., $\sS_1 = \sT(\mathbb{C})$ where $\mathbb{C}$ is Cohen forcing, and we will inductively define the remainder of our symmetric iteration.\footnote{We start the iteration at index $1$ rather than $0$ for notational convenience related to the coherence of the indexing at steps below $\omega$ and after. One could also start with letting $\sS_0$ be some trivial system and then ignore the first coordinate when writing $\bar p$, $\bar \pi$ or $\bar H$.}

Suppose we have constructed $\sS_\delta$, for some $\delta \leq \omega_1$. Before defining $\dot{\mathbb{R}}_\delta$ (in case $\delta < \omega_1$), we first verify some general properties about the iteration up to $\delta$.\footnote{We will only know precisely what $\sS_\delta$ is once we have specified what happens in each step, but the description we have given so far is sufficient to make some general observations.} To simplify notation, for the rest of this section, let us abbreviate $\sS = \sS_{\delta}$, $\mathbb{P}= \PP_{\delta}$, $\sG = \sG_{\delta}$, and $\sF = \sF_{\delta}$.

For $e \in [\delta \times \omega]^{<\omega}$, write $e_\alpha$ for the $\alpha^\textrm{th}$ section of $e$, i.e., $e_\alpha = \{ n : (\alpha, n) \in e \}$. Let $\fix(e) = \langle  \fix(e_\alpha)\check{\,} : \alpha < \delta\rangle \in \sF$. While $\sF$ contains more complicated groups, it usually suffices to only consider those of the form $\fix(e)$, as can be seen by the following:

\begin{lemma}\label{lem:simplefilter}
Let $\dot x \in \HS$, $\bar p \in \mathbb{P}$. Then there is $\bar q \leq \bar p$, $\dot y \in \HS$ and $e\in[\delta \times \omega]^{<\omega}$ so that $\fix(e) \leq \sym(\dot y)$ and $\bar q \Vdash \dot x = \dot y$.
\end{lemma}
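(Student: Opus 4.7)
The plan is to extract a sufficiently simple generator of $\sym(\dot x)$ from $\sF$, extend $\bar p$ to decode its data into ground-model finite sets, and then construct $\dot y$ as a $\fix(e)$-invariant representative below the extension.

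First I would select a generator $\bar H = \langle \dot H(\beta) : \beta < \delta \rangle \leq \sym(\dot x)$ in $\sF$, whose support $s \subseteq \delta$ is finite. Since each iterand is of the form $\sT(\dot{\mathbb R}_\alpha)$, the filter $\dot \sE_\beta$ in the $\sS_\beta$-extension is generated by the groups $\fix(e)$ for $e \in [\omega]^{<\omega}$, so $\dot H(\beta)$ is forced to contain $\fix(\dot e)$ for some $\sS_\beta$-name $\dot e$ for a finite subset of $\omega$. Proceeding by recursion on $\beta \in s$ in increasing order, I would strengthen $\bar p$ step by step to obtain $\bar q \leq \bar p$ such that, for each $\beta \in s$, $\bar q \restriction \beta \Vdash_{\sS_\beta} \fix(e_\beta)\check{\,} \leq \dot H(\beta)$ for some ground-model $e_\beta \in [\omega]^{<\omega}$. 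With $e := \bigcup_{\beta \in s} \{\beta\} \times e_\beta$, I would define
\[
\dot y = \{(\bar \pi(\bar r), \bar \pi(\dot z)) : \bar \pi \in \fix(e),\ \dot z \in \HS,\ \bar r \leq \bar q,\ \bar r \Vdash_\sS \dot z \in \dot x\},
\]
with an appropriate rank bound on $\dot z$ to ensure $\dot y$ is a set. The closure under $\fix(e)$ makes $\fix(e) \leq \sym(\dot y)$ automatic, while a density argument (take $\bar \pi = \id$) gives $\bar q \Vdash \dot x \subseteq \dot y$.

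The main obstacle is the reverse inclusion $\bar q \Vdash \dot y \subseteq \dot x$, which requires $\bar q \Vdash \bar \pi(\dot x) = \dot x$ for every $\bar \pi \in \fix(e)$, even though $\fix(e) \not\leq \sym(\dot x)$ in general. For this, I would use a mixing trick: given $\bar \pi \in \fix(e)$, construct $\bar \pi' \in \sG$ by letting each $\dot \pi'(\beta)$ agree with $\dot \pi(\beta)$ below $\bar q \restriction \beta$ and equal (a name for) the identity elsewhere. Then each $\dot \pi'(\beta)$ is unconditionally forced to lie in $\dot H(\beta)$—below $\bar q \restriction \beta$ it lies in $\fix(e_\beta) \leq \dot H(\beta)$, and elsewhere it is $\id \in \dot H(\beta)$—so $\bar \pi' \in \bar H \leq \sym(\dot x)$ and hence $\bar \pi'(\dot x) = \dot x$. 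Lemma~\ref{lem:equalautom} then yields $\bar q \Vdash \bar \pi(\dot x) = \bar \pi'(\dot x) = \dot x$, which is exactly what is needed to push elements of $\dot y^G$ back into $\dot x^G$ for any generic $G \ni \bar q$.
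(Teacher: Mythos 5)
Your proposal follows the same overall route as the paper (extract a finite-support generator $\bar H\leq\sym(\dot x)$, strengthen $\bar p$ to pin down ground-model finite sets $e_\beta$, define a $\fix(e)$-invariant $\dot y$ forced equal to $\dot x$, and reduce everything to patching a given $\bar\pi\in\fix(e)$ into an automorphism lying in $\bar H$, via Lemma~\ref{lem:equalautom}), but there is a genuine gap at precisely that decisive patching step. For $\bar\pi'$ to be an element of $\sG$ at all (let alone of $\bar H$, and to be a legitimate input to Lemma~\ref{lem:equalautom}), each component $\dot\pi'(\beta)$ must be a hereditarily symmetric $\sS_\beta$-name for an element of $\dot\sH_\beta$; indeed membership in $\bar H$ is the statement $\Vdash_{\sS_\beta}\dot\pi'(\beta)\in\dot H(\beta)$, which presupposes this. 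Your $\dot\pi'(\beta)$ is obtained by mixing over the condition $\bar q\restriction\beta$ (``equal to $\dot\pi(\beta)$ below $\bar q\restriction\beta$, the identity elsewhere''), and such a mixture is in general \emph{not} symmetric: automorphisms of $\PP_\beta$ move $\bar q\restriction\beta$, so the natural mixed name is only fixed by automorphisms that respect $\bar q\restriction\beta$, and this is not a group in $\sF_\beta$; nor can you appeal to Fact~\ref{fact:namebydef}, since the case distinction ``$\bar q\restriction\beta$ lies in the generic filter'' is made via the canonical name for the generic, which is not a symmetric name. You simply assert $\bar\pi'\in\bar H$, and this is exactly where the work lies.

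The gap is repairable in the spirit of your construction, and this is essentially what the paper does: make the case distinction depend on an invariantly definable statement about symmetric names rather than on a condition. The paper first arranges $\bar q\restriction\beta\Vdash\dot H(\beta)=\fix(e_\beta)\check{\,}$ and then lets $\dot\sigma(\beta)$ be, via Fact~\ref{fact:namebydef}, a name forced to equal $\dot\pi(\beta)$ if $\dot H(\beta)=\fix(e_\beta)\check{\,}$ and the identity otherwise; since the parameters $\dot\pi(\beta)$, $\dot H(\beta)$ and $\fix(e_\beta)\check{\,}$ are hereditarily symmetric, so is $\dot\sigma(\beta)$, and one gets $\bar\sigma\in\bar H$ together with $\bar q\restriction\beta\Vdash\dot\sigma(\beta)=\dot\pi(\beta)$. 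In your containment-only setup the analogous fix is to case-split on ``$\dot\pi(\beta)\in\dot H(\beta)$'': Fact~\ref{fact:namebydef} yields a symmetric name $\dot\pi'(\beta)$ with $\sym(\dot\pi(\beta))\cap\sym(\dot H(\beta))\leq\sym(\dot\pi'(\beta))$, unconditionally forced into $\dot H(\beta)$, and forced equal to $\dot\pi(\beta)$ below $\bar q\restriction\beta$, because $\bar q\restriction\beta\Vdash\fix(e_\beta)\check{\,}\leq\dot H(\beta)$ while $\Vdash\dot\pi(\beta)\in\fix(e_\beta)\check{\,}$. With that replacement the rest of your argument (the $\fix(e)$-closure defining $\dot y$, the two inclusions, and the appeal to Lemma~\ref{lem:equalautom}) goes through; the remaining differences from the paper are cosmetic, e.g.\ the paper builds $\dot y$ from conditions forcing $\dot H(\alpha)=\fix(e_\alpha)\check{\,}$ rather than taking a $\fix(e)$-closure of pairs below $\bar q$.
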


\begin{proof}
Let $\bar H \leq \sym(\dot x)$, $\bar q \leq \bar p$, and $e \in [\delta \times \omega]^{<\omega}$, so that $$\bar q \restriction \alpha \Vdash_{\PP_\alpha} \dot H(\alpha) = \fix(e_\alpha)\check{\,}$$ for every $\alpha < \delta$. This is easy to achieve by extending $\bar p$ finitely often, deciding all $\dot H(\alpha)$ for $\alpha \in \supp(\bar H)$. Let $\gamma$ be least so that $\dot x \in \HS_\gamma$ and let $\dot y$ consist of all pairs $(\bar r, \dot z) \in \mathbb{P} \times \HS_\gamma$ so that $\bar r \restriction \alpha \Vdash_{\PP_\alpha} \dot H(\alpha) = \fix(e_\alpha)\check{\,}$ for all $\alpha < \delta$, and $\bar r \Vdash \dot z \in \dot x$. Clearly $\bar q \Vdash \dot x = \dot y$. 

\begin{claim}
$\fix(e) \leq \sym(\dot y)$. 
\end{claim}
\begin{proof}
Let $\bar \pi \in \fix(e)$, and let $(\bar r, \dot z) \in \dot y$ be arbitrary. Consider $\bar \sigma\in\sG$ where each $\dot \sigma(\alpha)$ is so that the following is forced: ``either $\dot H(\alpha) = \fix(e_\alpha)\check{\,}$ and $\dot \sigma(\alpha) = \dot \pi(\alpha)$, or $\dot H(\alpha) \neq \fix(e_\alpha)\check{\,}$ and $\dot \sigma(\alpha)$ is the identity". Such a name can be chosen uniformly by Fact~\ref{fact:namebydef}. Then, $\bar \sigma \in \bar H$, and we note that also $\bar \sigma(\bar r) \restriction \alpha \Vdash \dot H(\alpha) = \fix(e_\alpha)\check{\,}$, for each $\alpha$, as $\bar \sigma \restriction \alpha \in \bar H \restriction \alpha \leq \sym(\dot H(\alpha))$. Thus, also $\bar \sigma(\bar r) \restriction \alpha \Vdash \dot \pi(\alpha) = \dot \sigma(\alpha)$ for each $\alpha$. By Lemma~\ref{lem:equalautom}, we have that $\bar \sigma(\bar r) \Vdash \bar \pi(\dot z) = \bar \sigma(\dot z)$. In particular, $\bar \sigma(\bar r) \Vdash \bar \pi(\dot z) \in \dot x$. On the other hand, we may also verify by induction that the conditions $\bar \sigma(\bar r)$ and $\bar \pi(\bar r)$ are equivalent. 
This implies that $(\bar \pi(\bar r), \bar \pi(\dot z)) \in \dot y$, as desired. 
\end{proof}
\end{proof}

Something similar can be done on the level of the automorphisms themselves. Let $f$ be a function from $\delta$ to finitary permutations of $\omega$, so that for all but finitely many $\alpha < \delta$, $f(\alpha)$ is the identity. Then, we can consider $\tau_f = \langle \check{f}(\alpha) : \alpha < \delta \rangle \in \sG$. $f$ acts naturally on $\delta \times \omega$ via $$f \cdot (\alpha, n) = (\alpha, f(\alpha)(n)).$$ For any $e \in [\delta \times \omega]^{<\omega}$, $$\tau_f \fix(e) \tau_f^{-1} = \fix (f \cdot e),$$ where $f \cdot e = \{ f \cdot (\alpha, n) : (\alpha, n) \in e \}$.

\begin{lemma}\label{lem:simplegroup}
Let $\bar \pi \in \sG$, $\bar p \in \PP$, and let $\dot x$ be an arbitrary $\mathbb{P}$-name. Then, there is $\bar q \leq \bar p$ and $f$ as above such that $$\bar q \Vdash \bar \pi(\dot x) = \tau_f(\dot x).$$
Moreover, whenever $\bar \pi \in \fix(e)$, we can ensure that $\tau_f \in \fix(e)$ as well. 
\end{lemma}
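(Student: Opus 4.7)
The plan is to strengthen $\bar p$ so as to decide every coordinate of $\bar\pi$ to equal (the check-name of) a specific finitary permutation of $\omega$, and then to transfer this coordinate-wise equality to $\dot x$ via Lemma~\ref{lem:equalautom}. By Lemma~\ref{lem:fsilemma}(1), $\bar\pi$ has finite support, so enumerate $\supp(\bar\pi) = \{\alpha_0 < \dots < \alpha_{k-1}\}$. I would construct a descending sequence $\bar p_0 = \bar p \geq \bar p_1 \geq \dots \geq \bar p_k$ in $\PP$ such that, for each $i < k$, $\bar p_{i+1} \restriction \alpha_i$ forces $\dot\pi(\alpha_i)$ to equal the check-name of a specific $\sigma_i \in \sH$, which we then designate $f(\alpha_i) := \sigma_i$. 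This step is available because $\dot\pi(\alpha_i)$ is an $\sS_{\alpha_i}$-name for an element of the ground-model countable set $\sH$, so the symmetric forcing theorem yields some extension of $\bar p_i \restriction \alpha_i$ in $\PP_{\alpha_i}$ that decides its value; Lemma~\ref{lem:fsilemma}(4) then allows us to attach the untouched tail $\bar p_i \restriction [\alpha_i, \delta)$ to produce $\bar p_{i+1} \leq \bar p_i$ in $\PP$.

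Setting $f(\alpha) = \id$ for every $\alpha \notin \supp(\bar\pi)$ completes the definition of $f$ (which has finite support, contained in $\supp(\bar\pi)$), and $\bar q := \bar p_k$ then satisfies $\bar q \restriction \beta \Vdash \dot\pi(\beta) = \check{f}(\beta)$ for every $\beta < \delta$: this holds by construction on $\supp(\bar\pi)$, using that forcing is preserved under strengthening, and trivially off the support since both sides then name the identity. Applying Lemma~\ref{lem:equalautom} to $\bar\pi$, $\tau_f$, and $\bar q$ yields $\bar q \Vdash \bar\pi(\dot x) = \tau_f(\dot x)$, which is the main conclusion.

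For the moreover clause, membership $\bar\pi \in \fix(e)$ unpacks through the two-step identifications of Definition~\ref{def:twostep}(3), applied coordinate-by-coordinate, to the requirement that $\Vdash_{\sS_\alpha} \dot\pi(\alpha) \in \fix(e_\alpha)\check{\,}$ for every $\alpha < \delta$. Combined with $\bar q \restriction \alpha_i \Vdash \dot\pi(\alpha_i) = \check{f}(\alpha_i)$, this forces $\check{f}(\alpha_i) \in \fix(e_{\alpha_i})\check{\,}$, and absoluteness of check-names places $f(\alpha_i)$ into $\fix(e_{\alpha_i})$ in the ground model; off the support of $\bar\pi$, the choice $f(\alpha) = \id$ fixes $e_\alpha$ trivially. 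Hence $\tau_f \in \fix(e)$. The only real delicacy lies in verifying that each stitched condition $\bar p_{i+1}$ is genuinely a member of $\PP$ extending $\bar p_i$, which reduces immediately to the coordinate-wise nature of the order and preservation of finite support under finite modifications, and so poses no real obstacle.
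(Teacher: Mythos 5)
Your argument is correct and is exactly the paper's proof: strengthen $\bar p$ to decide each $\dot\pi(\alpha)$ (finitely many, by finite support) as a check-name $\check f(\alpha)$, and then apply Lemma~\ref{lem:equalautom} to $\bar\pi$ and $\tau_f$; the paper states this in one line and you have merely filled in the routine details, including the correct unpacking of $\bar\pi\in\fix(e)$ to get the moreover clause.
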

\begin{proof}
Let $\bar q$ decide $\dot \pi(\alpha)$ for each $\alpha$ and then use Lemma~\ref{lem:equalautom}.
\end{proof}

This shows that we can in fact consider the simpler system $\sS' = (\PP, \sG', \sF')$ where $\sG'$ consists only of the automorphisms of the form $\tau_f$ and $\sF'$ is generated by only the groups $\fix(e) \cap \sG'$. Observe for instance that Lemma~\ref{lem:simplefilter} can be applied hereditarily to show that for any $\bar p$, and $\dot x \in \HS_\sS$, there is $\bar q \leq \bar p$ and $\dot y \in \HS_{\sS'}$ so that $\bar q \Vdash \dot x = \dot y$. 

This is an instance of a much more general situation in which we want to consider only particular names for the conditions, automorphisms and generators of the filter at each iterand of our symmetric iteration. In the language of \cite[Section 5]{symext}, $\sS'$ would be called a \emph{reduced iteration}. 
It won't be necessary for us to actually pass to $\sS'$, and we haven't even shown that $\sS'$ is a symmetric system, albeit this is easy to check. Rather, it will be sufficient to use the previous lemmas directly in order to simplify our arguments. 

\medskip

 For $(\alpha, n) \in \delta \times \omega$, let $\dot g_{\alpha, n}$ be a canonical name for the $\dot{\mathbb{R}}_\alpha$-generic added in the $n^\textrm{th}$ coordinate of $\dot\QQ_\alpha$. To be precise, let $\gamma_\alpha$ be least so that for any $\sS_\alpha$-name $\dot s$ for an element of $\dot{\mathbb{R}}_\alpha$, there is $\dot r \in \HS_{\sS_{\alpha}, \gamma_\alpha}$ with $\Vdash_{\sS_\alpha} \dot s = \dot r$. Let $$\dot g_{\alpha, n} = \{ (\bar p, \dot r) : \bar p\in\PP_{\alpha+1}\wedge\,\dot r \in \HS_{\sS_\alpha, \gamma_\alpha} \wedge \bar p \restriction \alpha\Vdash_{\sS_\alpha} \dot r = \dot p(\alpha)(n)\}.$$ 
Then, $\dot g_{\alpha, n} \in \HS$ and $\fix(\{\alpha, n\})\leq \sym(\dot g_{\alpha, n})$. More generally, note that if $\bar \pi \in \sG$ is such that $\Vdash \dot \pi(\alpha)(\check{n}) = \check{m}$, then we will have that $\bar \pi(\dot g_{\alpha, n}) = \dot g_{\alpha, m}$.

We define $$\dot A_\delta = \{ \bar \pi(\dot g_{\alpha,n}) : \bar  \pi \in \sG, (\alpha, n) \in \delta \times \omega \}^\bullet \in \HS.$$ 
Clearly, $\sym(\dot A_\delta)=\sG$, and by Lemma~\ref{lem:simplegroup}, $$\Vdash \dot A_\delta = \{ \dot g_{\alpha, n} : (\alpha, n) \in \delta \times \omega\}^\bullet.$$ 

Concluding our definition, if $\delta<\omega_1$, we let $\dot \sT_\delta = (\dot{\mathbb{Q}}_\delta, \dot \sH_\delta, \dot \sE_\delta)^\bullet$ be an $\sS_\delta$-name for $\sT(\Coll(\omega, \dot A_\delta))$: recall that for a set $A$, $\Coll(\omega, A)$ is the poset consisting of finite partial functions from $\omega$ to $A$ ordered by extension. As $\sym(\dot A_\delta) = \sG = \sG_\delta$, Fact~\ref{fact:namebydef} shows that we can indeed require that $\sym(\dot\sT_\delta) = \sG_\delta$.

\section{The ordering principle}\label{section:o}

In this section, we show that the ordering principle $\OP$ holds after performing the above-described symmetric iteration of length $\omega_1$ over a ground model of $\ZFC$ with a definable wellorder of its universe. For example, we may work over G\"odel's constructible universe. From now on, let $\sS = \sS_{\omega_1}$, $\mathbb{P}= \PP_{\omega_1}$, $\sG = \sG_{\omega_1}$, and $\sF = \sF_{\omega_1}$.\footnote{In fact, while this is not needed for our main result, note that the results of this section would hold for any $\delta\le\omega_1$ rather than just $\omega_1$.} 

\begin{lemma}\label{lem:restr}
    Let $\dot x_i \in \HS$, $\alpha<\omega_1$, $e_i \in [\alpha \times \omega]^{<\omega}$ and $\fix(e_i) \leq \sym(\dot x_i)$, for $i <n$. Let $\varphi(v_0, \dots, v_{n-1})$ be a formula with all free variables shown, and let $\bar p,\bar q\in\PP$. Then, whenever $\bar p \restriction \alpha = \bar q \restriction \alpha$, it holds that $$\bar p \Vdash_{\sS} \varphi(\dot x_0, \dots, \dot x_{n-1}) \text{ iff } \bar q \Vdash_{\sS} \varphi(\dot x_0, \dots, \dot x_{n-1}).$$
\end{lemma}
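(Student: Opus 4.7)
My approach is a density and automorphism argument. Suppose toward a contradiction that $\bar p \Vdash_\sS \varphi(\vec{\dot x})$ but $\bar q \not\Vdash_\sS \varphi(\vec{\dot x})$. Then some $\bar q' \leq \bar q$ forces $\lnot\varphi(\vec{\dot x})$. I would define $\bar r \in \PP$ by letting $\bar r \restriction \alpha = \bar q' \restriction \alpha$ and $\bar r(\beta) = \bar p(\beta)$ for every $\beta \in [\alpha, \omega_1)$. Since $\bar q' \restriction \alpha \leq \bar q \restriction \alpha = \bar p \restriction \alpha$ while $\bar r$ agrees with $\bar p$ above $\alpha$, we have $\bar r \leq \bar p$, hence $\bar r \Vdash_\sS \varphi(\vec{\dot x})$, and crucially $\bar r \restriction \alpha = \bar q' \restriction \alpha$.

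Setting $e = \bigcup_{i < n} e_i \subseteq \alpha \times \omega$, we have $\fix(e) = \bigcap_{i<n} \fix(e_i) \leq \bigcap_{i < n} \sym(\dot x_i)$. The plan is then to build an automorphism $\bar \pi \in \fix(e)$ with $\dot \pi(\beta) = \check{\id}$ for every $\beta < \alpha$, together with a common extension $\bar u \leq \bar \pi(\bar r), \bar q'$. Since such a $\bar \pi$ fixes each $\dot x_i$, applying it to $\bar r \Vdash_\sS \varphi(\vec{\dot x})$ yields $\bar \pi(\bar r) \Vdash_\sS \varphi(\vec{\dot x})$, so $\bar u$ forces both $\varphi$ and $\lnot\varphi$, the desired contradiction. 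Below $\alpha$ there is nothing to arrange: Lemma~\ref{lem:equalautom} applied to $\bar \sigma = \id$ shows $\bar \pi(\bar r) \restriction \alpha$ is forcing-equivalent to $\bar r \restriction \alpha = \bar q' \restriction \alpha$, so I may simply take $\bar u \restriction \alpha = \bar q' \restriction \alpha$.

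Above $\alpha$, I would build $\bar \pi$ and $\bar u$ by recursion along the finite set $F = (\supp(\bar r) \cup \supp(\bar q')) \cap [\alpha, \omega_1)$. At stage $\beta \in F$, given $\bar u \restriction \beta$ and $\bar \pi \restriction \beta$, first extend $\bar u \restriction \beta$ inside $\PP_\beta$ to decide the finite domains $d_1 = \dom((\bar \pi \restriction \beta)(\bar r(\beta)))$ and $d_2 = \dom(\bar q'(\beta))$ of the respective $\sS_\beta$-names for elements of $\dot \QQ_\beta$. Then choose a finitary permutation $\pi_\beta$ of $\omega$ with $\pi_\beta[d_1] \cap d_2 = \emptyset$ and set $\dot \pi(\beta) := \check{\pi_\beta}$. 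The new component $(\bar \pi(\bar r))(\beta) = \check{\pi_\beta}((\bar \pi \restriction \beta)(\bar r(\beta)))$ is now forced by the extended $\bar u \restriction \beta$ to be a finite partial function of domain $\pi_\beta[d_1]$, disjoint from $d_2$, so $\bar u(\beta)$ can be named as their union. Padding trivially off $F$ produces the desired $\bar \pi \in \sG$, which lies in $\fix(e)$ because $\dot \pi(\beta) = \check{\id}$ for $\beta < \alpha$ and $e_\beta = \emptyset$ for $\beta \geq \alpha$.

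The main obstacle is precisely this coordinated construction: at each $\beta \in F$ the conditions $(\bar \pi \restriction \beta)(\bar r(\beta))$ and $\bar q'(\beta)$ are only $\sS_\beta$-names, so their domains must first be forced to take concrete values before a shifting permutation $\pi_\beta$ can be selected. What makes it go through are two structural features of each iterand: the finite-support product form of $\dot \QQ_\beta$, under which conditions with disjoint domains are automatically compatible, and the fact that $\dot \sH_\beta$ is the full group of finitary permutations of $\omega$, providing enough room to push $d_1$ clear of $d_2$.
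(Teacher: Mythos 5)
Your proof is correct and takes essentially the same approach as the paper: an automorphism $\bar\pi$ that is the identity below $\alpha$ (hence lies in $\bigcap_{i<n}\fix(e_i)$ and fixes the $\dot x_i$) and whose coordinates above $\alpha$ shift the finite domains of one condition's coordinates away from the other's, using the finite-support product form of each $\dot{\QQ}_\beta$ and the fullness of the finitary permutation group. The only (cosmetic) difference is bookkeeping: the paper chooses each $\dot\pi(\beta)$ as a definable name via Fact~\ref{fact:namebydef}, so that $\bar\pi(\bar r)$ is outright compatible with $\bar p$ without extending any condition, whereas you first merge $\bar p$ and $\bar q'$ into an auxiliary condition and then decide the relevant domains along a side condition so as to use check-names of ground-model permutations.
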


\begin{proof}
    Suppose $\bar p \Vdash_{\sS} \varphi(\dot x_0, \dots, \dot x_{n-1})$ but $\bar r \Vdash_{\sS} \neg\varphi(\dot x_0, \dots, \dot x_{n-1})$ for some $\bar r \leq \bar q$. It suffices to find $\bar \pi \in \bigcap_{i  < n} \fix(e_i)$ so that $\bar \pi(\bar r) \parallel \bar p$, to yield a contradiction. Simply let $\dot \pi(\beta)$ be a name for the identity for all $\beta < \alpha$, thus already ensuring that $\bar \pi \in \bigcap_{i  < n} \fix(e_i)$, and then define $\dot \pi(\beta)$, for $\beta \geq \alpha$ inductively as follows: When $\bar \pi \restriction \beta$ has been defined, simply let $\dot \pi(\beta)$ be a name for a finitary permutation of $\omega$ mapping the domain of $(\bar \pi \restriction \beta)(\dot r(\beta))$ away from the domain of $\dot p(\beta)$. If $a, b \subseteq \omega$ are finite, then a finitary permutation $\pi$ such that $\pi[a] \cap b = \emptyset$ can of course be easily defined from $a$ and $b$ as parameters. So Fact~\ref{fact:namebydef} shows that such an $\sS_\beta$-name $\dot \pi(\beta)$ exists.\footnote{Of course we are basically just showing that every tail of the iteration is homogeneous with respect to the group $\sG$.}
\end{proof}

Define $\dot \Gamma = \{\bar \pi(\dot G) : \bar \pi \in \sG \}^\bullet$, where $\dot G$ is the canonical name for the $\mathbb{P}$-generic filter. While $\dot \Gamma$ is not an $\sS$-name in general, it is still a symmetric $\PP$-name and it plays an important role in any symmetric system. The following is a quite general observation and shows that $M[G]_\sS = \HOD^{M[G]}_{M(A)\cup \{ \Gamma \}}$, where $\Gamma = \dot \Gamma^G$.\footnote{It is based on the fact that the names of the form $(\dot g_{\alpha_0, n_0}, \dots, \dot g_{\alpha_k, n_k})^\bullet$ for $(\alpha_i, n_i) \in \omega_1 \times \omega$ form a \emph{respect-basis} for $\sS$ (see more in \cite[Section 6.4]{symext}).}

\begin{lemma}\label{lem:define}
    Let $\dot x \in \HS$ and $e \in [\omega_1 \times \omega]^{<\omega}$ so that $\fix(e) \leq \sym(\dot x)$. Whenever $G$ is $\PP$-generic over $\MM$, $x = \dot x^G$ and $\Gamma = \dot \Gamma^G$, then $x$ is definable in $\MM[G]$ from elements of $V$, from $\Gamma$ and from $\dot g_{\alpha, n}^G$ for $(\alpha, n) \in e$, as the only parameters.
\end{lemma}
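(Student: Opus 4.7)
My plan is to define $x$ inside $\MM[G]$ by the formula
\[
y \in x \iff \exists (\bar p, \dot y) \in \dot x\ \exists G' \in \Gamma\ \Bigl(\bar p \in G' \wedge \dot y^{G'} = y \wedge \forall (\alpha,n) \in e\ \dot g_{\alpha,n}^{G'} = \dot g_{\alpha,n}^G\Bigr).
\]
Aside from $\dot x$ and $e$, which are in $V$, this uses only $\Gamma$ and the tuple $\langle \dot g_{\alpha,n}^G : (\alpha,n)\in e\rangle$ as parameters. Every $G'\in\Gamma$ is of the form $\bar\pi[G]$ for some $\bar\pi\in\sG$, hence is $\PP$-generic over $\MM$, so $\dot y^{G'}$ and $\dot g_{\alpha,n}^{G'}$ are evaluable in $\MM[G]$. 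The forward direction is immediate via $G' = G$.

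For the converse, let $G' = \bar\pi[G]$ satisfy the right-hand side. The argument hinges on the identity $\dot z^{\bar\pi[G]} = \bar\pi^{-1}(\dot z)^G$, obtained from the symmetry identity $\bar\sigma(\dot z)^{\bar\sigma[G]} = \dot z^G$ by substituting $\bar\sigma = \bar\pi^{-1}$. This translates the hypothesis on the generics into $\bar\pi^{-1}(\dot g_{\alpha,n})^G = \dot g_{\alpha,n}^G$ for $(\alpha,n)\in e$; moreover $\bar p \in G'$ means $\bar\pi^{-1}(\bar p)\in G$, so once we verify $\bar\pi^{-1}(\dot x)^G = x$ we get $y = \dot y^{G'} = \bar\pi^{-1}(\dot y)^G \in \bar\pi^{-1}(\dot x)^G = x$.

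The bridge to $\fix(e)$ uses Lemma~\ref{lem:simplegroup} applied to $\bar\pi^{-1}$: by a density argument, there exist $\bar q\in G$ and a single ground-model $f$ such that $\bar q \Vdash \bar\pi^{-1}(\dot z) = \tau_f(\dot z)$ simultaneously for $\dot z = \dot x$ and for each $\dot z = \dot g_{\alpha,n}$, $(\alpha,n)\in e$, because in the proof of that lemma $f$ is extracted uniformly from $\bar q$'s decisions on each coordinate $\dot\pi(\alpha)$, independently of the target name. Since $\tau_f(\dot g_{\alpha,n}) = \dot g_{\alpha, f(\alpha)(n)}$, the hypothesis becomes $\dot g_{\alpha, f(\alpha)(n)}^G = \dot g_{\alpha,n}^G$ on $e$. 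A routine density argument for the finite-support product $\dot\QQ_\alpha$ shows the evaluations $\dot g_{\alpha,n}^G$ are pairwise distinct as $n$ varies within each stage, so $f(\alpha)(n) = n$ on $e$, i.e., $\tau_f \in \fix(e) \le \sym(\dot x)$. Hence $\tau_f(\dot x) = \dot x$, and thus $\bar\pi^{-1}(\dot x)^G = \tau_f(\dot x)^G = \dot x^G = x$, as required.

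The main obstacle is securing a single $\bar q\in G$ and $f$ that work for Lemma~\ref{lem:simplegroup} simultaneously across $\dot x$ and all the $\dot g_{\alpha,n}$; this falls out of the uniformity in that lemma's proof. The auxiliary distinctness fact for coordinate generics at each stage $\alpha$ is a standard density argument, but it must be invoked explicitly to close the step $f(\alpha)(n) = n$.
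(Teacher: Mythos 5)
Your proposal is correct and follows essentially the same route as the paper: define $x$ from $\Gamma$ and the $e$-indexed generics, get one inclusion via $G'=G$, and for the converse replace the automorphism by some $\tau_f$ below a condition in $G$, use distinctness of the coordinate generics to conclude $\tau_f\in\fix(e)\le\sym(\dot x)$, and hence that the evaluation of $\dot x$ is unchanged. The only cosmetic difference is that the paper obtains a single $f$ by applying Lemma~\ref{lem:simplegroup} to the name $\dot G$ itself (so that $\dot z^{H}=\tau_f(\dot z)^G$ holds for every name at once), whereas you appeal to the uniformity coming from Lemma~\ref{lem:equalautom} to treat $\dot x$ and the $\dot g_{\alpha,n}$ simultaneously; both are legitimate, and your explicit density argument for the pairwise distinctness of the $\dot g_{\alpha,n}^G$ is exactly what the paper's remark ``this is only possible if $\tau_f\in\fix(e)$'' tacitly uses.
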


\begin{proof}
    In $\MM[G]$, define $y$ to consist exactly of those $z$ so that $z \in \dot x^H$ for some $H \in \Gamma$ with $\dot g_{\alpha, n}^H = \dot g_{\alpha, n}^G$ for all $(\alpha, n) \in e$. We claim that $x = y$. Clearly, $x \subseteq y$ as $G \in \Gamma$. Now suppose that $H \in \Gamma$ is arbitrary, so that $\dot g_{\alpha, n}^H = \dot g_{\alpha, n}^G$, for all $(\alpha, n) \in e$. Then $H = \bar \pi(\dot G)^G$, for some $\bar \pi \in \sG$, and further, by Lemma~\ref{lem:simplegroup}, $H = \tau_f(\dot G)^G$ for some $f$. We obtain that $\dot g_{\alpha, n}^H = \tau_f(\dot g_{\alpha, n})^G = \dot g_{\alpha, n}^G$, for each $(\alpha, n) \in e$. But this is only possible if $\tau_f \in \fix(e)$. So also $\dot x^H = \tau_f(\dot x)^G = \dot x^G$ and we are done. 
\end{proof}

The following is very specific to the way we chose $\dot \sT_\alpha$: 

\begin{lemma}\label{lem:section}
    Let $\bar p \in \PP$, $\dot x \in \HS$ and $e \in [\omega_1 \times \omega]^{<\omega}$ be non-empty with $\fix(e) \leq \sym(\dot x)$. Further, let $\alpha = \max \dom(e)$. Then there is $\bar q \leq \bar p$ and $\dot y \in \HS$ with $\fix(\{\alpha \} \times e_\alpha) \leq \sym(\dot y)$ so that $\bar q \Vdash \dot y = \dot x$.
\end{lemma}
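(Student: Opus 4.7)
The key insight is that for any $\beta<\alpha$, the name $\dot g_{\beta,n}$ is forced to be a member of $\dot A_\alpha$, and therefore can be ``decoded'' as a value of the stage-$\alpha$ collapse generic $\dot g_{\alpha,n_0}$ for any fixed $n_0\in e_\alpha$. Since $n_0\in e_\alpha$, the name $\dot g_{\alpha,n_0}$ is fixed by every $\bar\pi\in\fix(\{\alpha\}\times e_\alpha)$, so any object derived from it automatically carries this enlarged symmetry, which is precisely what we need for $\dot y$.

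The plan is to pick some $n_0\in e_\alpha$ (non-empty since $\alpha\in\dom(e)$), pick pairwise distinct naturals $k_{\beta,n}$ for each $(\beta,n)\in D:=e\setminus(\{\alpha\}\times e_\alpha)$, and by a routine density argument extend $\bar p$ to $\bar q\le\bar p$ forcing $\dot g_{\alpha,n_0}(\check{k}_{\beta,n})=\dot g_{\beta,n}$ for every $(\beta,n)\in D$. Letting $\gamma$ be least with $\dot x\in\HS_\gamma$, I then set
\[
\dot y_0:=\{(\bar r,\dot z)\in\PP\times\HS_\gamma : \bar r\le\bar q\text{ and }\bar r\Vdash_\sS\dot z\in\dot x\},
\]
and define
\[
\dot y:=\bigcup_{\bar\pi\in\fix(\{\alpha\}\times e_\alpha)}\bar\pi(\dot y_0).
\]
The group-closure of the union gives $\fix(\{\alpha\}\times e_\alpha)\le\sym(\dot y)$ directly, and since each $\bar\pi(\dot z)$ still lies in $\HS_\gamma$, we also have $\dot y\in\HS$. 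It then suffices to verify $\bar q\Vdash_\sS\dot y=\dot x$.

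For any generic $G\ni\bar q$, one observes that $\dot y_0^H=\dot x^H$ whenever $\bar q\in H$ and $\dot y_0^H=\emptyset$ otherwise, so combining this with the standard identity $(\bar\pi(\dot y_0))^G=\dot y_0^{\bar\pi^{-1}(G)}$ yields $\dot y^G=\bigcup\{\dot x^{\bar\pi^{-1}(G)}:\bar\pi\in\fix(\{\alpha\}\times e_\alpha),\ \bar\pi(\bar q)\in G\}$. The main obstacle, and the real content of the proof, is to show that any $\bar\pi\in\fix(\{\alpha\}\times e_\alpha)\setminus\fix(e)$ makes $\bar q$ and $\bar\pi(\bar q)$ incompatible. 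Such a $\bar\pi$ must move some $(\beta,n)\in D$ to $(\beta,m)$ with $m\ne n$; then $\bar q$ forces $\dot g_{\alpha,n_0}(\check{k}_{\beta,n})=\dot g_{\beta,n}$ while $\bar\pi(\bar q)$ forces the same expression to equal $\dot g_{\beta,m}$ (using $n_0\in e_\alpha$ to see $\bar\pi$ fixes $\dot g_{\alpha,n_0}$), and these are incompatible because $\Vdash\dot g_{\beta,n}\ne\dot g_{\beta,m}$ by an easy density argument on $\dot{\mathbb{R}}_\beta$. Consequently only $\bar\pi\in\fix(e)=\sym(\dot x)$ can contribute, and for those $\dot x^{\bar\pi^{-1}(G)}=\bar\pi(\dot x)^G=\dot x^G$, so $\dot y^G=\dot x^G$, as required.
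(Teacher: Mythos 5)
Your underlying idea -- decoding the lower-stage generics $\dot g_{\beta,n}$, $(\beta,n)\in e\setminus(\{\alpha\}\times e_\alpha)$, through a stage-$\alpha$ collapse generic $\dot g_{\alpha,n_0}$ with $n_0\in e_\alpha$ -- is the same insight the paper uses, but you implement it differently: the paper goes through Lemma~\ref{lem:define} to get a formula defining $\dot x^G$ from $\dot\Gamma^G$, $\langle\dot g_{\alpha,n}^G:n\in e_\alpha\rangle$ and ground-model parameters, and reads $\dot y$ off the forcing relation, whereas you force explicit decoding equations and close a restricted name under the group $\fix(\{\alpha\}\times e_\alpha)$. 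The construction of $\dot y$ and the reduction to showing $\bar\pi(\dot x)^G\subseteq\dot x^G$ for those $\bar\pi$ with $\bar\pi(\bar q)\in G$ are fine. However, your central claim -- that every $\bar\pi\in\fix(\{\alpha\}\times e_\alpha)\setminus\fix(e)$ makes $\bar q$ and $\bar\pi(\bar q)$ incompatible -- is false, and this is a genuine gap. Membership in $\fix(e)$ means that for each $(\beta,n)\in e$ it is \emph{forced} that $\dot\pi(\beta)(\check n)=\check n$; its failure only yields \emph{some} condition forcing $\dot\pi(\beta)(\check n)\neq\check n$, and that condition may be incompatible with $\bar q$. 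Concretely, let $\dot\pi(\beta)$ be a name which is the identity whenever $\bar q\restriction\beta$ lies in the generic and a transposition moving a point of $e_\beta$ otherwise, with all other coordinates the identity: then $\bar\pi\in\fix(\{\alpha\}\times e_\alpha)\setminus\fix(e)$, yet $\bar q\restriction\gamma\Vdash\dot\pi(\gamma)=\id$ for every $\gamma$, so by Lemma~\ref{lem:equalautom} $\bar q\leq\bar\pi(\bar q)$ -- they are compatible. Relatedly, your step ``$\bar\pi$ moves $(\beta,n)$ to $(\beta,m)$, so $\bar\pi(\dot g_{\beta,n})=\dot g_{\beta,m}$'' tacitly assumes $\dot\pi(\beta)(\check n)$ is decided by $\mathbbm{1}$, which an arbitrary member of the group need not satisfy.

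The construction can be repaired, but the argument must run through the generic rather than through outright incompatibility, in the pattern of the paper's Lemmas~\ref{lem:simplefilter} and~\ref{lem:intersect}: fix $G\ni\bar q$ and $\bar\pi\in\fix(\{\alpha\}\times e_\alpha)$ with $\bar\pi(\bar q)\in G$. From $\bar q\Vdash\dot g_{\alpha,n_0}(\check k_{\beta,n})=\dot g_{\beta,n}$ and its image under $\bar\pi$ (using $\bar\pi(\dot g_{\alpha,n_0})=\dot g_{\alpha,n_0}$), you get $\bar\pi(\dot g_{\beta,n})^G=\dot g_{\beta,n}^G$; since $\Vdash\dot g_{\beta,n}\neq\dot g_{\beta,m}$ for $m\neq n$, some condition in $G$ forces $\dot\pi(\beta)(\check n)=\check n$ for all $(\beta,n)\in e$. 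Now replace $\bar\pi$ by $\bar\sigma$ whose $\beta$-th coordinate is a name for ``$\dot\pi(\beta)$ if $\dot\pi(\beta)$ fixes $e_\beta$ pointwise, and the identity otherwise'' (Fact~\ref{fact:namebydef}), so that $\bar\sigma\in\fix(e)\leq\sym(\dot x)$, and apply Lemma~\ref{lem:equalautom} on that condition to conclude $\bar\pi(\dot x)^G=\bar\sigma(\dot x)^G=\dot x^G$. Two smaller points: you cannot choose the integers $k_{\beta,n}$ in advance, since $\bar p$ may already commit $\dot g_{\alpha,n_0}$ at those arguments to other values -- let the density argument supply them; and you only have $\fix(e)\leq\sym(\dot x)$, not equality, though that inclusion is all your final step needs.
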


\begin{proof}
    By the previous lemma, for any generic $G$, $\dot x^G$ is definable in $\MM[G]$ from $\dot \Gamma^G$, $\langle \dot g_{\beta, n}^G : (\beta, n) \in e \rangle$ and parameters in $V$. But note that each $\dot g_{\beta, n}^G$, for $\beta < \alpha$, is itself definable from any $\dot g_{\alpha, m}^G$, as the latter enumerate $\dot A_\alpha^G$. Thus, $\dot x^G$ is already definable from $\dot \Gamma^G$, $\langle \dot g_{\alpha, n}^G : n \in e_\alpha \rangle$ and parameters in $V$. So we can find $\bar q \leq \bar p$ and a formula $\varphi$ so that $$\bar q \Vdash_{\mathbb{P}} \dot x = \{ z : \varphi(z, \dot \Gamma, \langle \dot g_{\alpha, n} : n \in e_\alpha \rangle^\bullet, \check{v}_0, \dots, \check{v}_k ) \},$$ for some $v_0, \dots, v_k \in \MM$. For some large enough $\gamma$, define $$\dot y = \{ (\bar r, \dot z) \in \PP \times \HS_\gamma : \bar r \Vdash \varphi(\dot z, \dot \Gamma, \langle \dot g_{\alpha, n} : n \in e_\alpha \rangle^\bullet, \check{v}_0, \dots, \check{v}_k ) \}.$$ We obtain that $\fix(\{ \alpha \} \times e_\alpha) \leq \sym(\dot y)$ and $\bar q \Vdash \dot y = \dot x$.  
\end{proof}

\begin{lemma}\label{lem:intersect}
  Let $\alpha<\omega_1$, $\bar p\in\PP$, $a_0, a_1 \in [\omega]^{<\omega}$ and $\dot x,\dot y\in\HS$ such that
  \begin{enumerate}
      \item $\bar p\forces\dot x=\dot y$,
      \item $\fix(\{\alpha\} \times a_0)\le\sym(\dot x)$,
      \item $\fix(\{\alpha\} \times a_1)\le\sym(\dot y)$.
  \end{enumerate}
  Then, there is $\bar q \leq \bar p$, $e \in [(\alpha +1) \times \omega]^{<\omega}$ and $\dot z\in\HS$ so that $e_\alpha = a_0 \cap a_1$, $\fix( e) \leq \sym(\dot z)$ and $\bar q\forces\dot z=\dot x$. 
\end{lemma}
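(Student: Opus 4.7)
The plan is to reduce $\dot x$ and $\dot y$ to explicit definable forms via Lemma~\ref{lem:define}, and then to define $\dot z$ by a formula whose only level-$\alpha$ parameters are the generics indexed by $a_0 \cap a_1$, absorbing the rest into an existential quantifier over the class $\dot \Gamma$. Concretely, by Lemma~\ref{lem:define} together with the density argument used in the proof of Lemma~\ref{lem:section}, I would first refine $\bar p$ to some $\bar p' \le \bar p$ and obtain formulas $\varphi_0, \varphi_1$ together with ground-model parameter tuples $\vec v_0, \vec v_1$ so that
\[\bar p' \Vdash_{\PP} \dot x = \{u : \varphi_0(u, \dot\Gamma, \langle \dot g_{\alpha, n} : n \in a_0\rangle^\bullet, \vec v_0)\},\]
and analogously for $\dot y$ with $\varphi_1$, $\langle \dot g_{\alpha, n} : n \in a_1\rangle^\bullet$, and $\vec v_1$. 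Since $\bar p' \le \bar p \Vdash \dot x = \dot y$, both formulas carve out the same set under $\bar p'$.

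Next, I would define $\dot z$ via a formula $\psi(u, \Gamma, \vec g, \vec v_0, \vec v_1)$ asserting that there exists $H \in \Gamma$ whose level-$\alpha$ generics agree with $\vec g$ on indices in $a_0 \cap a_1$, such that the two sets defined from $\varphi_0, \varphi_1$ using $H$'s values at $a_0, a_1$ respectively coincide and contain $u$. For $\gamma$ sufficiently large, take
\[\dot z = \{(\bar r, \dot u) \in \PP \times \HS_\gamma : \bar r \Vdash_\PP \psi(\dot u, \dot\Gamma, \langle \dot g_{\alpha, n} : n \in a_0 \cap a_1\rangle^\bullet, \vec v_0, \vec v_1)\}.\]
Any $\bar \pi \in \fix(\{\alpha\} \times (a_0 \cap a_1))$ fixes $\dot\Gamma$ together with each $\dot g_{\alpha, n}$ for $n \in a_0 \cap a_1$, hence fixes $\dot z$, giving $\fix(\{\alpha\} \times (a_0 \cap a_1)) \le \sym(\dot z)$.

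Exhibiting $\bar q \le \bar p'$ forcing $\dot z = \dot x$ is the main difficulty. The inclusion $\dot x \subseteq \dot z$ is immediate, since $H = G$ itself witnesses $\psi$ for any $u \in \dot x^G$. For the converse, a witness $H \in \Gamma$ has the form $\tau_f^{-1}(G)$ where $f(\alpha)$ pointwise-fixes $a_0 \cap a_1$. The key combinatorial observation is that any such finitary permutation of $\omega$ decomposes as a product of finitary permutations each lying in $\fix(a_0)$ or $\fix(a_1)$, since any transposition $(i,j)$ disjoint from $a_0 \cap a_1$ can be rewritten as $(i,k)(j,k)(i,k)$ for some $k \in \omega \setminus (a_0 \cup a_1)$, with each factor fixing at least one of $a_0, a_1$ pointwise. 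Iterating the symmetric forcing theorem along such a decomposition, using that $\fix(a_0)$-factors preserve the name $\dot x$, $\fix(a_1)$-factors preserve $\dot y$, and $\bar p' \Vdash \dot x = \dot y$, transports $u \in \dot x^H$ back to $u \in \dot x^G$. The technical obstacle is ensuring compatibility of conditions along the intermediate steps of the decomposition; I would handle this by extending $\bar p'$ to some $\bar q$ deciding enough data at level $\alpha$ and, if necessary, specifying additional finite data at lower coordinates $(\beta, m)$ with $\beta < \alpha$, which populates the non-$\alpha$ sections of $e$ that the lemma permits.
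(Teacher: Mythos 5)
Your setup is fine as far as it goes: defining $\dot z$ from a formula whose only level-$\alpha$ parameters are $\dot\Gamma$ and $\langle \dot g_{\alpha,n} : n \in a_0\cap a_1\rangle^\bullet$ is a legitimate alternative to the paper's name $\dot z=\bigcup_{\bar\pi\in\fix(e)}\bar\pi(\dot y')$, and the reduction of a witness $H\in\Gamma$ to $\tau_f^{-1}[G]$ with $f(\alpha)$ fixing $a_0\cap a_1$ pointwise is exactly as in Lemmas~\ref{lem:simplegroup} and~\ref{lem:define}. But the step you flag as ``the main difficulty'' is where the real content of the lemma lies, and your proposed resolution does not work. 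First, the purely group-theoretic decomposition of $f(\alpha)$ into factors fixing $a_0$ or $a_1$ pointwise does not chain: after applying the first factor $\rho$, the set $\dot x$ (resp.\ the defined set $X$) is no longer controlled by $a_0$ but by its image, so the next factor would have to fix $\rho[a_0]$ or $\rho[a_1]$ pointwise, not $a_0$ or $a_1$. More fundamentally, at each intermediate stage you need the hypothesis $\dot x=\dot y$ (equivalently, the coincidence of the two defined sets) to hold \emph{for the transported filter}, and that is only available if the transported condition $\tau_{h}(\bar q)$ (or an equivalent one) actually lies in $G$. Membership of such images in $G$ is not something you can secure by ``extending $\bar p'$ to decide enough data'': for any fixed permutation chosen in advance in $\MM$, a generic $G$ below your $\bar q$ will typically omit its image. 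This is precisely the obstacle, and it cannot be dissolved combinatorially.

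The paper's proof closes this gap with a genericity argument that your outline is missing. One first uses Lemma~\ref{lem:simplefilter} to pass to $\bar q\le\bar p$ with $\fix(e')\le\sym(\dot q(\alpha))$ for some $e'\in[\alpha\times\omega]^{<\omega}$ (this, not ``extra finite data'', is where the lower part of $e$ comes from: it guarantees the automorphisms in play fix the name $\dot q(\alpha)$, so one can compute their effect on the $\alpha$-th coordinate). Then, working in $\MM[G]$ with $s=\dot q(\alpha)^{G_\alpha}$ and the level-$\alpha$ generic $H$ over $\MM[G_\alpha]$, a \emph{density argument} produces a finitary permutation $\sigma$ fixing $a_0$ pointwise, moving $a_1\setminus a_0$ to coordinates outside $\{n: f(\alpha)(n)\neq n\}\cup a_0\cup\dom(s)$, and such that $\sigma(s)\in H$; one then checks $f(\alpha)(\sigma(s))\in H$ as well. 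This ensures that both $\tau_h(\bar q)\restriction(\alpha+1)$ and $\tau_f(\tau_h(\bar q))\restriction(\alpha+1)$ lie in $G_{\alpha+1}$, and Lemma~\ref{lem:restr} (homogeneity of the tail) lets one discard the coordinates above $\alpha$, so the transported statements $\dot x=\tau_h(\dot y)$ and $\tau_f(\dot x)=\tau_h(\dot y)$ really hold in $\MM[G]$; chaining these two bridges gives $\dot x^G=\tau_f(\dot y')^G$. So the missing ideas are: the adaptive, $G$- and $f$-dependent choice of the auxiliary permutation by density at level $\alpha$, the prior symmetrization of $\dot q(\alpha)$, and the use of Lemma~\ref{lem:restr} to replace ``$\tau(\bar q)\in G$'' by ``$\tau(\bar q)\restriction(\alpha+1)\in G_{\alpha+1}$''. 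Without these, your transport argument from $u\in\dot x^H$ to $u\in\dot x^G$ does not go through.
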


\begin{proof}
 First, applying Lemma~\ref{lem:simplefilter}, we find $\bar q \leq \bar p$ such that $\fix (e') \leq \sym(\dot q(\alpha))$ for some $e' \in [\alpha \times \omega]^{<\omega}$. We define $e = e' \cup \{\alpha \} \times (a_0 \cap a_1)$. Next, instead of $\dot y$, let us consider $$\dot y' = \{ (\bar r, \tau) : \exists (\bar s, \tau) \in \dot y \, (\bar r \leq \bar q, \bar s)  \},$$
  and note that, $\bar q\forces\dot y'=\dot y = \dot x$. 
  We let $$\dot z = \bigcup_{\bar\pi\in\fix(e)} \bar\pi(\dot y').$$
  Now clearly, $\fix(e)\le\sym(\dot z)$. We claim that $\bar q \Vdash \dot z = \dot x$. Towards this end, let $G$ be an arbitrary generic containing $\bar q$. As $\dot y' = \id(\dot y') \subseteq \dot z$, we have that $\dot x^G = \dot y^G = \dot y'^G \subseteq \dot z^G$. To see that $\dot z^G \subseteq \dot x^G$, we show that $\bar \pi(\dot y')^G \subseteq \dot x^G$, for every $\bar\pi \in \fix(e)$. So fix $\bar \pi \in \sG$ now. According to Lemma~\ref{lem:simplegroup}, there is $f$ so that $\bar \pi(\dot y')^G = \tau_f(\dot y')^G$ and $\tau_f \in \fix(e)$.
  
  If $\tau_f(\bar q) \notin G$, clearly $\tau_f(\dot y')^G = \emptyset \subseteq \dot x^G$, as every condition appearing in a pair in $\tau_f(\dot y')$ is below $\tau_f(\bar q)$.
  
  So assume that $\tau_f(\bar q) \in G$. Consider for a moment the $\QQ_\alpha$-generic $H$ over $\mathcal{M}[G_\alpha]$ given by $G$, where $G_\alpha = \{ \bar r \restriction \alpha : \bar r \in G \}$. More precisely, $$H = \{ \dot s^{G_\alpha} : \exists \bar r \in G ( \dot r(\alpha) = \dot s) \}.$$ We have that $s = \dot q(\alpha)^{G_\alpha} \in H$ and moreover, as $\tau_f \restriction \alpha \in \sym(\dot q(\alpha))$, \begin{align*}
      f(\alpha)(s) &= f(\alpha)(\dot q(\alpha)^{G_\alpha}) \\ &= f(\alpha)\left((\tau_f \restriction \alpha)(\dot q(\alpha))^{G_\alpha}\right)\\ &= \tau_f(\bar q)(\alpha)^{G_\alpha}\in H.
  \end{align*} Let $d = \{ n \in \omega : f(\alpha)(n) \neq n  \} \cup a_0 \cup \dom(s)$, which is finite. By a density argument over $\MM[G_\alpha]$, we can find a finitary permutation $\sigma$ of $\omega$ that switches $a_1 \setminus a_0$ with a set disjoint from $d$, leaves everything else fixed, and is such that $\sigma(s) \in H$. In particular, $\sigma$ fixes $a_0$. Moreover, note that $f(\alpha)(\sigma(s)) \in H$ as well: $\sigma(s) \restriction \sigma[a_1 \setminus a_0]$ is not moved by $f(\alpha)$, and $\sigma(s) \restriction (\omega \setminus \sigma[a_1 \setminus a_0]) \subseteq s$, where we know that $f(\alpha)(s) \in H$.

  Back in $\MM$, let $h(\beta) = \id$ for every $\beta \in \omega_1 \setminus \{ \alpha \}$ and let $h(\alpha) = \sigma$. Then, $\tau_h \in \fix(\{ \alpha \} \times a_0)$ and $$\tau_{h} (\bar q) \restriction ({\alpha +1}), \tau_f(\tau_{h}(\bar q)) \restriction ({\alpha +1}) \in G_{\alpha+1} = \{ \bar r \restriction (\alpha +1) : \bar r \in G \}.$$
  
  Then we obtain that $\tau_h(\bar q) \Vdash \dot x = \tau_h(\dot y') = \tau_h(\dot y)$. By Lemma~\ref{lem:restr}, this is already forced by $\tau_h(\bar q) \restriction (\alpha +1)^\frown \langle \dot{\mathbbm{1}}_\beta : \beta \in [\alpha +1, \omega_1)\rangle
  \in G$. Thus, $\dot x^G = \tau_h(\dot y')^G = \tau_h(\dot y)^G$.

  Also, we note that $\tau_f \in \fix(\{\alpha \} \times ((a_0 \cap a_1) \cup \sigma[a_1 \setminus a_0])) \leq \sym(\tau_h(\dot y))$ (see the paragraph after Lemma~\ref{lem:simplefilter}). Hence, $\tau_f(\tau_h(\dot y)) = \tau_h(\dot y)$ and $\tau_f(\tau_h(\bar q)) \Vdash \tau_f(\dot x) =  \tau_h(\dot y)$. Similarly to before, this implies that $\tau_f(\dot x)^G = \tau_h(\dot y)^G$.
  Since $\tau_f(\bar q) \Vdash \tau_f(\dot x) = \tau_f(\dot y')$ and $\tau_f(\bar q) \in G$, we have $\tau_f(\dot x)^G = \tau_f(\dot y')^G$. So finally, we obtain that $$\dot x^G = \tau_h(\dot y)^G = \tau_f(\dot x)^G = \tau_f(\dot y')^G,$$ which is what we wanted to show.
\end{proof}

\begin{definition}
    Let $\bar p \in \PP$, $\dot x \in \HS$, $\alpha <\omega_1$ and $a \in [\omega]^{<\omega}$ be so that $\fix(\{\alpha \} \times a) \leq \sym(\dot x)$. We say that $\alpha$ is a \emph{minimal index for $\dot x$ below $\bar p$} if for any $\beta < \alpha$, $a' \in [\omega]^{<\omega}$ and $\dot z \in \HS$ with $\fix(\{\beta\} \times a') \leq \dot z$, $$\bar p \Vdash \dot x \neq \dot z.$$ 
    
    We say that \emph{$\{ \alpha\} \times a$ is a minimal support for $\dot x$ below $\bar p$} if $\alpha$ is a minimal index for $\dot x$ below $\bar p$ and for any $\bar q \leq \bar p$, $a' \in [\omega]^{<\omega}$  and $\dot z \in \HS$ with $\fix(\{\alpha\} \times a') \leq \sym(\dot z)$, if $\bar q \Vdash \dot x = \dot z$, then $a \subseteq a'$.
\end{definition}

\begin{corollary}\label{cor:minsup}
    For any $\dot x \in \HS$ and $\bar p \in \PP$, there is $\bar q \leq \bar p$, $\dot y \in \HS$, $\alpha < \omega_1$ and $a \in [\omega]^{<\omega}$ so that $\bar q \Vdash \dot x = \dot y$ and $\{\alpha \} \times a$ is a minimal support for $\dot y$ below $\bar q$.
\end{corollary}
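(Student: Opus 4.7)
My plan is a lexicographic minimization argument based on the three preceding lemmas. Let $S$ be the set of pairs $(\beta, a') \in \omega_1 \times [\omega]^{<\omega}$ for which there exist $\bar r \leq \bar p$ and $\dot z \in \HS$ with $\fix(\{\beta\} \times a') \leq \sym(\dot z)$ and $\bar r \Vdash \dot z = \dot x$. I would first verify that $S$ is nonempty: applying Lemma~\ref{lem:simplefilter} to $\dot x$ and $\bar p$ gives some $\bar q_0 \leq \bar p$, $\dot y_0 \in \HS$, and a finite $e$ with $\fix(e) \leq \sym(\dot y_0)$ and $\bar q_0 \Vdash \dot y_0 = \dot x$. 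If $e \neq \emptyset$, Lemma~\ref{lem:section} trims the support down to $\{\alpha_0\} \times e_{\alpha_0}$ for $\alpha_0 = \max\dom(e)$, placing $(\alpha_0, e_{\alpha_0}) \in S$; if $e = \emptyset$, then $\dot y_0$ is fully symmetric and $(0, \emptyset) \in S$.

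Next, pick $(\alpha, a) \in S$ lexicographically least, comparing $\beta$ first and then $|a'|$, together with concrete witnesses $\bar q \leq \bar p$ and $\dot y \in \HS$. Minimality of $\alpha$ as an index for $\dot y$ below $\bar q$ is immediate: any $\dot z$ with support $\{\beta\} \times a''$ for $\beta < \alpha$ forced equal to $\dot y$ below some $\bar r \leq \bar q$ would be forced equal to $\dot x$ there, yielding $(\beta, a'') \in S$ strictly below $(\alpha, a)$.

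The main work is verifying that $a$ is also minimal, and the key tool is Lemma~\ref{lem:intersect}. Suppose, towards a contradiction, that some $\bar r \leq \bar q$, $a' \in [\omega]^{<\omega}$, and $\dot z \in \HS$ with $\fix(\{\alpha\} \times a') \leq \sym(\dot z)$ satisfy $\bar r \Vdash \dot y = \dot z$ while $a \not\subseteq a'$. Lemma~\ref{lem:intersect} gives $\bar r' \leq \bar r$, a finite $e \subseteq (\alpha+1) \times \omega$ with $e_\alpha = a \cap a'$, and $\dot w \in \HS$ with $\fix(e) \leq \sym(\dot w)$ and $\bar r' \Vdash \dot w = \dot x$; crucially $|a \cap a'| < |a|$. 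If $e_\alpha \neq \emptyset$, then $\alpha = \max\dom(e)$ and Lemma~\ref{lem:section} yields a pair $(\alpha, a \cap a') \in S$ lex-strictly below $(\alpha, a)$. If $e_\alpha = \emptyset$ but $e \neq \emptyset$, Lemma~\ref{lem:section} instead places a pair at index $\max\dom(e) < \alpha$ into $S$. If $e = \emptyset$, $\dot w$ is fully symmetric, so $(0,\emptyset) \in S$; this contradicts minimality of $\alpha$ unless $\alpha = 0$, in which case lex-minimality forces $a = \emptyset$, so that $a \subseteq a'$ is automatic---contrary to our assumption.

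The main obstacle I anticipate is the bookkeeping in this last case split, namely tracking whether the support produced by Lemma~\ref{lem:intersect} still touches the level $\alpha$ or drops strictly below it, and ensuring the nonemptiness hypothesis of Lemma~\ref{lem:section} is met at each invocation. Once these cases are sorted, termination is immediate from the well-foundedness of the lexicographic order on $\omega_1 \times \omega$.
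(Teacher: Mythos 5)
Your proof is correct and follows essentially the same route as the paper: first minimize the index $\alpha$ (via Lemma~\ref{lem:section}), then minimize $a$, and derive the contradiction from Lemma~\ref{lem:intersect} followed by Lemma~\ref{lem:section}. The only cosmetic differences are that you minimize $|a|$ rather than taking a $\subseteq$-minimal $a$ (which works equally well, since the smaller support produced is a proper subset of $a$) and that you spell out the $e=\emptyset$ edge case which the paper treats implicitly.
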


\begin{proof}
From Lemma~\ref{lem:section}, there is a minimal $\alpha$ where we can find $\bar q \leq \bar p$, $\dot y$ and $a$ so that $\bar q \Vdash \dot y = \dot x$ and $\fix(\{ \alpha \} \times a) \leq \sym(\dot y)$. In that case, $\alpha$ is a minimal index for $\dot y$ below $\bar q$. Moreover then, fixing that minimal $\alpha$, there is a $\subseteq$-minimal $a$ for which we find $\bar q$, $\dot y$ as above. We claim that $\{ \alpha \} \times a$ is a minimal support for $\dot y$ below $\bar q$. Otherwise, there are $\bar q' \leq \bar q$, $a'$ and $\dot z$ with $\fix(\{\alpha \} \times a')\le\sym(\dot z)$ so that $\bar q' \Vdash \dot y = \dot z$ but $a \not\subseteq a'$. In particular $a \cap a'$ is a strict subset of $a$. According to Lemma~\ref{lem:intersect}, there is $\bar q'' \leq \bar q'$, $e \in [(\alpha +1) \times \omega]^{<\omega}$, $e_\alpha = a \cap a'$, and $\dot z'$ with $\fix(e) \leq \dot z'$, so that $\bar q'' \Vdash \dot z' = \dot z = \dot y$. If $\alpha = 0$, then $a$ was not $\subseteq$-minimal. If $\alpha > 0$, then $a \cap a' \neq \emptyset$ since otherwise $\alpha$ was not minimal, by Lemma~\ref{lem:section}. But then again, according to Lemma~\ref{lem:section}, $a$ was not $\subseteq$-minimal -- contradiction.
\end{proof}

Note that in the above corollary, neither $\alpha$ nor the set $a$ is necessarily unique. However, what is easily seen to be true using Lemma \ref{lem:intersect} is that if $\{\alpha\}\times a$ and $\{\beta\}\times b$ both are minimal supports for $\dot y$ below the same condition $\bar q$, then $\alpha = \beta$ and $a=b$.

\begin{lemma}\label{lem:Alin}
  There is an $\sS$-name $\dot <$ for a linear order of $\dot A$, such that $\sym(\dot <) = \sG$.
\end{lemma}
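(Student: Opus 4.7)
I would define $\dot{<}$ by transfinite recursion on the levels of the iteration. The guiding observation is that while $\sG$ permutes the index $n$ inside each generic $\dot g_{\alpha,n}$, it preserves the level $\alpha$ and acts equivariantly on the actual values of the generics in any extension. One can therefore hope to order $\dot A$ by a rule that refers only to levels and to values, not to indices. Concretely, I would declare $\dot g_{\alpha,n}\,\dot{<}\,\dot g_{\beta,m}$ to hold whenever $\alpha < \beta$; within the initial Cohen level, compare the two reals lexicographically; within a collapse level $\alpha$, where each $\dot g_{\alpha,n}$ is forced to be a surjection $\omega \to \dot A_\alpha$, compare two such surjections lexicographically with respect to the already-constructed order on $\dot A_\alpha$, i.e., look at the least coordinate of disagreement and use the previously-defined order there.

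To make this formal, I would recursively produce $\sS$-names $\dot{<}_\alpha$ for $\alpha \leq \omega_1$, with $\Vdash_\sS$ ``$\dot{<}_\alpha$ is a linear order on $\dot A_\alpha$ extending $\dot{<}_\beta$ for each $\beta < \alpha$, placing every element of $\dot A_\alpha \setminus \dot A_\beta$ strictly above $\dot A_\beta$'', and with $\sym(\dot{<}_\alpha) = \sG$. Setting $\dot{<} := \dot{<}_{\omega_1}$ would then give the lemma. At successor stages, the extension $\dot{<}_{\alpha+1}$ is uniquely determined by the rule above from the $\sG$-invariant parameters $\dot{<}_\alpha$ and $\dot A_{\alpha+1}$, so Fact~\ref{fact:namebydef} produces a name with $\sym(\dot{<}_{\alpha+1}) \geq \sym(\dot{<}_\alpha) \cap \sym(\dot A_{\alpha+1}) = \sG$. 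At limit stages I would simply take the union, whose symmetry group remains $\sG$.

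The main technical obstacle is verifying that the lex-like rule actually produces a total order within each collapse level. Two distinct names $\dot g_{\alpha,n}, \dot g_{\alpha,m}$ are forced to be distinct surjections $\omega \to \dot A_\alpha$, so a standard density argument for $\Coll(\omega, \dot A_\alpha)$ shows they must be forced to disagree at some coordinate; at the least such coordinate the values live in $\dot A_\alpha$ and are compared by $\dot{<}_\alpha$, which is a linear order by induction. The remaining verifications --- antisymmetry, transitivity, totality at each stage, and consistency of the extensions across levels --- are routine transfinite bookkeeping.
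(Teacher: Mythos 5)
Your construction is correct in outline, but it takes a genuinely different route from the paper. The paper avoids any recursion along the iteration: it observes that, identifying each generic with the function it induces, every element of $\dot A$ is forced to lie in the definable class $X_{\omega_1}$ obtained by iterating $X_{\alpha+1}=(X_\alpha)^{\omega}$ starting from $X_0=\omega$, and that this class carries a $\ZF$-definable (parameter-free) linear order $<_{\omega_1}$, built lexicographically at successors and by ``least level, then that level's order'' at limits. One application of Fact~\ref{fact:namebydef}, with $\dot A$ as the only parameter, then yields $\dot <$ with $\sym(\dot <)=\sG$ immediately. Your proposal instead builds names $\dot <_\alpha$ by transfinite recursion along the iteration, ordering by level of appearance and then lexicographically via the previously constructed order. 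This works, but it shifts the burden onto bookkeeping the paper never has to do: you must check at every stage that the parameters you feed into Fact~\ref{fact:namebydef} --- in particular $\dot A_\alpha$ and $\dot A_{\alpha+1}$, viewed (after padding) as names for the \emph{full} system $\sS=\sS_{\omega_1}$ --- really have symmetry group all of $\sG$, that the recursion can be carried out uniformly in $\MM$ so the limit-stage unions make sense, and that the evaluations cohere so the union is again a linear order on $\dot A_\lambda$. These are routine but not free, which is exactly what the paper's ``one global definable order, restricted to $\dot A$'' trick buys. Two small remarks on your write-up: the ``density argument'' in the last paragraph is unnecessary --- once $\dot A_\alpha$ is linearly ordered, the lexicographic order on functions $\omega\to\dot A_\alpha$ is a linear order outright in $\ZF$ (equal elements are simply equal; distinct ones have a least coordinate of disagreement), so you never need to know that distinct indices give distinct generics; and your ``order by level first'' rule implicitly uses that the level of an element is well-defined, which is fine (ranks increase with level, and in any case your formulation via extending $\dot<_\alpha$ handles accidental coincidences harmlessly) but is worth a sentence.
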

\begin{proof} In any model of $\ZF$, we can consider the definable sequence of sets $\langle X_\alpha : \alpha \in \Ord\rangle$, obtained recursively by setting $X_0 = \omega$, $X_{\alpha + 1} = (X_\alpha)^{\omega}$ and $X_\alpha = \bigcup_{\beta < \alpha} X_\beta$ for limit $\alpha$. We can recursively define linear orders $<_\alpha$ on $X_\alpha$, by letting $<_0$ be the natural order on $\omega$, $<_{\alpha+1}$ be the lexicographic ordering on $X_{\alpha +1}$ obtained from $<_\alpha$ and for limit $\alpha$, $x <_\alpha y$ iff, for $\beta$ least such that $x \in X_\beta$, either $y \notin X_\beta$ or $x <_\beta y$. Then $<_{\omega_1}$ is a definable linear order of $X_{\omega_1}$. Identifying the $\mathbb{R}_\alpha$-generics with the surjections they induce, note that $\dot A$ is forced to be contained in $X_{\omega_1}$, and by Fact~\ref{fact:namebydef}, there is an $\sS$-name $\dot <$ as required. 
\end{proof}

\begin{proposition}
  There is a class $\sS$-name $\dot F$ for an injection of the symmetric extension into $\Ord \times \dot A^{<\omega}$ such that $\sym(\dot F) = \sG$. In particular, $\OP$ holds in our symmetric extension.
\end{proposition}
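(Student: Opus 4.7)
The plan is to encode each $x \in M[G]_\sS$ as a pair $(\xi(x), \vec B(x)) \in \Ord \times A^{<\omega}$, where $\vec B(x)$ records the canonical minimal support of $x$ as a sequence in $A^{<\omega}$ via $\dot<$ (Lemma~\ref{lem:Alin}), and $\xi(x)$ codes the $<^V$-least definition of $x$ from the invariant parameters $V$, $\Gamma$, $\vec B(x)$. The first step is to verify canonicity of the minimal support $(\alpha(x), a(x))$ obtained from Corollary~\ref{cor:minsup}. Given two witnessing tuples $(\dot y_i, \bar q_i, \alpha_i, a_i)$ for $x$ with $\bar q_i \in G$, take $\bar r \leq \bar q_0, \bar q_1$ in $G$, so $\bar r \forces_\sS \dot y_0 = \dot y_1$. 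If, say, $\alpha_1 < \alpha_0$, the minimality of $\alpha_0$ below $\bar q_0$ (applied with $\dot z = \dot y_1$) gives $\bar q_0 \forces_\sS \dot y_0 \neq \dot y_1$, contradicting $\bar r \leq \bar q_0$ forcing the equality; hence $\alpha_0 = \alpha_1 =: \alpha$. Lemma~\ref{lem:intersect} then provides $\bar r' \leq \bar r$, $e \in [(\alpha+1) \times \omega]^{<\omega}$ with $e_\alpha = a_0 \cap a_1$, and $\dot z$ with $\fix(e) \leq \sym(\dot z)$ forced equal to $\dot y_0$; since $\alpha$ must appear in $\dom(e)$ (otherwise $\dot z$ would have its support at indices $< \alpha$, violating the minimality of $\alpha$), Lemma~\ref{lem:section} further reduces to $\dot z'$ with $\fix(\{\alpha\} \times (a_0 \cap a_1)) \leq \sym(\dot z')$ forced equal to $\dot y_0$. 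The minimality of $a_0$ then yields $a_0 \subseteq a_0 \cap a_1 \subseteq a_1$, and symmetrically.

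With the canonical support fixed, I set $\vec B(x) \in A^{<\omega}$ as the $\dot<^G$-increasing listing of $\{\dot g_{\alpha(x),n}^G : n \in a(x)\}$. Invoking Lemma~\ref{lem:define} with $e = \{\alpha(x)\} \times a(x)$, $x$ is definable in $M[G]$ from parameters in $V$, $\Gamma$, and the entries of $\vec B(x)$. I then define $\xi(x) \in \Ord$ as the $<^V$-code of the least pair $(\varphi, \vec v)$ consisting of a formula $\varphi$ (in the language of set theory augmented with a predicate for $\Gamma$) together with $V$-parameters $\vec v$ such that $x = \{y : \varphi(y, \Gamma, \vec B(x), \vec v)\}$, and I set $F(x) = (\xi(x), \vec B(x))$. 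Injectivity is immediate: from $(\xi, \vec B)$ one decodes $(\varphi, \vec v)$ and evaluates against $\Gamma$ and $\vec B$ to recover $x$.

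The class name $\dot F$ satisfies $\sym(\dot F) = \sG$ because every ingredient of the definition is $\sG$-invariant: the wellorder $<^V$ and the parameters $\vec v \in V$ live in the ground model, $\dot\Gamma$ has $\sym(\dot\Gamma) = \sG$ by construction, $\dot<$ has $\sym(\dot<) = \sG$ by Lemma~\ref{lem:Alin}, and $\vec B(x)$ is $\sG$-invariant because the twist $a \mapsto \rho^{-1}(a)$ of the support under $\bar\pi \in \sG$ acting as $\rho$ at stage $\alpha(x)$ cancels with the relabeling $\dot g_{\alpha, n}^{\bar\pi^{-1}G} = \dot g_{\alpha, \rho(n)}^G$. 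The ordering principle then follows since the lexicographic order on $\Ord \times A^{<\omega}$ (using $\dot<$ on the $A$-coordinate) pulls back through $F$ to a linear order on $M[G]_\sS$. The main technical hurdle is verifying that $\vec B(x)$ is genuinely $x$-intrinsic rather than depending on the chosen name or on the perspective of the generic; this is exactly the canonicity argument in the first paragraph, paired with the cancellation computation above.
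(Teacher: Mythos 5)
Your proposal is correct in substance but organizes the argument differently from the paper. The paper deliberately avoids proving that the minimal support is canonical for an element $x$ of the symmetric extension: after Corollary~\ref{cor:minsup} it only records uniqueness for a fixed name below a fixed condition, and in the proof of the proposition it instead picks the $\vartriangleleft$-least ground-model tuple $(\bar p,\dot z,\alpha,a,h)$ such that $\{\alpha\}\times a$ is a minimal support for $\dot z$ below $\bar p$ and some $H\in\Gamma$ with $\bar p\in H$ satisfies $\dot z^H=x$; a single Claim (the analogue of your cancellation computation, carried out rigorously via Lemma~\ref{lem:simplegroup}) then shows that $(\dot t_{\alpha,h})^H$ is independent of the choice of such $H$ and that $F$ is injective. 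You instead prove the stronger statement that $(\alpha(x),a(x))$ is intrinsic to $x$. Your canonicity argument is essentially right and uses Lemmas~\ref{lem:intersect} and~\ref{lem:section} exactly as the paper does in Corollary~\ref{cor:minsup} and the remark following it, but as written it has small gaps: you must extend $\bar r$ inside $G$ to a condition that actually forces $\dot y_0=\dot y_1$ (an arbitrary common extension in $G$ need not); the case $a_0\cap a_1=\emptyset$ needs attention (for $\alpha>0$ it is excluded by minimality of the index, via Lemma~\ref{lem:section} applied at $\max\dom(e)<\alpha$, and the case $e=\emptyset$ and the case $\alpha=0$ must be treated separately); and the claim that $\bar\pi\in\sG$ ``acts as a permutation $\rho$ at stage $\alpha$'' only makes sense after replacing $\bar\pi$ by some $\tau_f$ generically, again by Lemma~\ref{lem:simplegroup}.

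The one step I would not let pass as written is the ordinal coordinate: ``the least pair $(\varphi,\vec v)$ with $x=\{y:\varphi(y,\Gamma,\vec B(x),\vec v)\}$'' quantifies over formulas evaluated in all of $\MM[G]$, so by Tarski's theorem this is not a single first-order definition, and hence cannot directly be converted into a class $\sS$-name $\dot F$ via the forcing relation. This is repairable by the standard HOD-style device (take the least triple $(\beta,\varphi,\vec v)$ such that the definition holds over the level $V_\beta^{\MM[G]}$ of the cumulative hierarchy of $\MM[G]$), or avoided entirely as the paper does: there the ordinal simply codes the $\vartriangleleft$-least witnessing tuple $(\bar p,\dot z,\alpha,a,h)$, whose defining conditions are each a fixed first-order statement about the ground-model forcing relation, so the whole graph of $F$ is given by one formula with parameters $\dot\Gamma$ and $\dot{<}$ and $\sym(\dot F)=\sG$ is immediate. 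In exchange, your route yields the cleaner intrinsic statement that every $x$ carries a canonical support tuple $\vec B(x)\in A^{<\omega}$, while the paper's route needs less (no canonicity across different names and conditions) at the cost of coding a less intrinsic witness.
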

  \begin{proof}
    Fix a global well-order $\vartriangleleft$ of $\MM$ and let $G$ be $\PP$-generic over $\MM$. We will first provide a definition of an injection $F$ in the full $\PP$-generic extension $\MM[G]$. Then, we will observe that all the parameters in this definition have symmetric names, which will let us directly build an $\sS$-name for $F$.
    
    For each $\alpha < \omega_1$, $a \in [\omega]^{<\omega}$ and each enumeration $h = \langle n_i : i < k \rangle$ of $a$, define $\dot G_{\alpha, a} = \{ \dot g_{\alpha, n} : n\in a \}^\bullet$, and $\dot t_{\alpha, h} = \langle \dot g_{\alpha, n_i} : i < k \rangle^\bullet$.  Let $\Gamma = \dot \Gamma^G$, $< = \dot <^G$ and $A = \dot A^G$. Given $x \in \MM[G]_\sS$, $F(x)$ will be found as follows: 
    
     First, let $(\bar p, \dot z, \alpha, a, h) \in \MM$ be $\vartriangleleft$-minimal with the following properties: 
     
     \begin{enumerate}
         \item (in $\MM$) $\{\alpha\} \times a$ is a minimal support for $\dot z$ below $\bar p$,
         \item (in $\MM$) $h$ is an enumeration of $a$ so that $\bar p$ forces that $\dot t_{\alpha, h}$ enumerates $\dot G_{\alpha, a}$ in the order of $\dot <$,
         \item there is $H \in \Gamma$, with $\bar p \in H$ and $\dot z^H = x$
     \end{enumerate} Such a tuple certainly exists by Corollary~\ref{cor:minsup} and since $G \in \Gamma$.
     \begin{claim}
     For any $H, K \in \Gamma$ with $\bar p \in H, K$, the following are equivalent:
    \begin{enumerate}[label=(\alph*)]
        \item $(\dot t_{\alpha, h})^H = (\dot t_{\alpha,h})^K$,
        \item $\dot z^H = \dot z^K$.
    \end{enumerate}
    \end{claim}
    \begin{proof}
    Let $H, K \in \Gamma$, $\bar p \in H, K$. $H$ is itself a $\PP$-generic filter over $\MM$ and $\dot \Gamma^H = \dot \Gamma^G = \Gamma$, as can be easily checked. Thus, there is $\bar \pi \in \sG$ so that $K = \bar \pi(\dot G)^H$. By Lemma~\ref{lem:simplegroup}, there is $f$ so that $K = \tau_f(\dot G)^H$. Now note that $\tau_f(\dot G)^H = \tau_f^{-1}[H]$ and $(\dot t_{\alpha, h})^K = (\dot t_{\alpha, h})^{\tau_f^{-1}[H]} = \tau_f(\dot t_{\alpha, h})^H$. Similarly, $\dot z^K = \tau_f(\dot z)^H$.
    
    Suppose that $(\dot t_{\alpha, h})^H = (\dot t_{\alpha, h})^K$. Then $(\dot t_{\alpha, h})^H = \tau_f(\dot t_{\alpha, h})^H$. The only way this is possible is if $f(\alpha)(n)= n$ for every $n \in a$. In other words, $\tau_f \in \fix(\{\alpha \} \times a)$. Thus $\dot z^H = \tau_f(\dot z)^H = \dot z^K$.

    Now suppose that $\dot z^H = \dot z^K = \tau_f(\dot z)^H$. We have that $\fix(f \cdot (\{\alpha \} \times a)) = \tau_f \fix(\{\alpha \} \times a) \tau_f^{-1} \leq \sym(\tau_f(\dot z))$. Since $\{\alpha\} \times a$ is a minimal support of $\dot z$ below $\bar p \in H$, it follows that $a \subseteq f(\alpha)[a]$ and by a cardinality argument, $a = f(\alpha)[a]$ . This also means that $\dot G_{\alpha, a} = \tau_f(\dot G_{\alpha, a})$. As $\bar p$ forces that $\dot t_{\alpha, h}$ is the $\dot <$-enumeration of $\dot G_{\alpha,a}$, we have that $\tau_f(\bar p)$ forces that $\tau_f( \dot t_{\alpha, h})$ is the $\tau_f(\dot <)$ enumeration of $\tau_f(\dot G_{\alpha,a})$. We have that $\bar p \in K = \tau_f^{-1}[H]$, so $\tau_f(\bar p) \in H$ and indeed $(\dot t_{\alpha, h})^K = \tau_f( \dot t_{\alpha, h})^H$ is the enumeration of $\tau_f(\dot G_{\alpha,a})^H = \dot G_{\alpha,a}^H$ according to $\tau_f(\dot <)^H = <$, which is exactly what $(\dot t_{\alpha, h})^H$ is.
    \end{proof}
      
By the claim, there is a unique $t \in A^{<\omega}$ so that $t = (\dot t_{\alpha, h})^H$, for some, or equivalently all, $H \in \Gamma$ with $\bar p \in H$ and $\dot z^H = x$. We let $F(x) = (\xi, t)$, where $(\bar p, \dot z, \alpha, a, h)$ is the $\xi^\textrm{th}$ element of $\MM$ according to $\vartriangleleft$. To see that this is an injection, assume that $x$ and $y$ both yield the same $(\bar p, \dot z, \alpha, a, h)$ and $t$. Let $H,K\in\Gamma$ with $\bar p \in H, K$, and with $\dot z^H = x$, $\dot z^K = y$. By definition $ t = (\dot t_{\alpha, h})^H = (\dot t_{\alpha, h})^K$ and according to the claim $x = \dot z^H = \dot z^K = y$.  This finishes the definition of $F$. 

The definition we have just given can be rephrased as $$\text{ $F(x) = y$ iff $\varphi(x, y, \Gamma, <)$},$$ where $\varphi$ is a first order formula using the class parameters $\Gamma$ and $<$, and the only parameters that are not shown are parameters from $\MM$, such as the class $\vartriangleleft$ or the class of $(\bar p, \dot z, \alpha, a, h)$ so that (1) and (2) hold. Simply let $$\dot F = \{ (\bar p, (\dot x, \dot y)^\bullet) : \dot x, \dot y \in \HS \wedge \bar p \Vdash_{\PP} \varphi(\dot x, \dot y, \dot \Gamma, \dot <) \},$$ where the parameters from $\MM$ in $\varphi$ are replaced by their check-names. Then, $\dot F\subseteq\PP\times\HS$, and $\sym(\dot F) = \sG$, so $\dot F$ is a class $\sS$-name, as desired. \end{proof}

\section{The axiom of dependent choice}\label{section:dc}

In this section, we will show that the axiom of dependent choice $\DC$ holds in our above symmetric extension. We will use the well-known (and easy to verify) fact that $\DC$ holds if and only if any tree without maximal nodes contains an increasing chain of length $\omega$.

\begin{lemma}
For each $\alpha < \omega_1$, $\Vdash_{\PP_\alpha} \dot \QQ_\alpha \text{ is countable}$. In particular, $\mathbb{P}$ is ccc.
\end{lemma}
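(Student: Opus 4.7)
The plan is to unwind the definitions and reduce everything to counting. By construction, $\dot \QQ_\alpha$ is a name for the finite support product of $\omega$-many copies of $\Coll(\omega, \dot A_\alpha)$, so it suffices to show that $\dot A_\alpha$ is forced to be countable: a finite support product of $\omega$ copies of a countable poset is itself countable.

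First, I recall from Section~\ref{section:extension} that we have $\Vdash \dot A_\alpha = \{ \dot g_{\beta, n} : (\beta, n) \in \alpha \times \omega\}^\bullet$. Since $\alpha < \omega_1$, the index set $\alpha \times \omega$ is countable \emph{in the ground model}, so there is a ground-model bijection $f \colon \omega \to \alpha \times \omega$. The map $\check n \mapsto \dot g_{f(\check n)}$ then yields (a hereditarily symmetric name for) a surjection from $\check\omega$ onto $\dot A_\alpha$, so $\Vdash_{\PP_\alpha} |\dot A_\alpha| \leq \aleph_0$. Consequently $\Vdash_{\PP_\alpha} \Coll(\omega, \dot A_\alpha)$ is countable, and hence so is $\dot \QQ_\alpha$.

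For the ``in particular'' part, I would invoke Item~(10) of Lemma~\ref{lem:fsilemma}, which tells us that $\PP = \PP_{\omega_1}$ is a dense subposet of the usual finite-support iteration $\PP'_{\omega_1}$ of the $\dot\QQ_\beta$'s. Since antichains in a dense subposet are antichains in the ambient poset, it suffices to show that $\PP'_{\omega_1}$ is ccc. This is the standard preservation theorem: I would induct on $\alpha \leq \omega_1$, using at successor steps that countable (hence ccc) iterands preserve the ccc, and at limit stages the classical $\Delta$-system argument for finite support iterations of ccc forcings.

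I do not anticipate any real obstacle here. The only mild point is to be clear that the countability of $\dot A_\alpha$ uses a ground-model enumeration (so no circularity with preservation of cardinals) and that the dense embedding from Lemma~\ref{lem:fsilemma}(10) does indeed transfer ccc-ness from the standard iteration back to the symmetric one.
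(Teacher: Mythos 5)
Your proof is correct and follows essentially the same route as the paper: countability of $\dot A_\alpha$ (via the ground-model enumeration of $\alpha\times\omega$) gives countability of $\Coll(\omega,\dot A_\alpha)$ and hence of $\dot\QQ_\alpha$, and then Lemma~\ref{lem:fsilemma}(10) together with the standard ccc-preservation theorem for finite support iterations yields that $\PP$ is ccc. You merely spell out the details that the paper leaves implicit.
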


\begin{proof}
 If $G$ is $\PP_\alpha$-generic, $\dot A_\alpha^G = \{ \dot g_{\beta,n}^G : (\beta, n) \in \alpha \times \omega \}$ is clearly countable in $\MM[G]$. In particular, $\Coll(\omega, \dot A_\alpha^G)$ and $\dot \QQ_\alpha^G$ are countable forcing notions. 
 
 By Lemma~\ref{lem:fsilemma}, Item (10), $\PP$ is just a dense subposet of the usual finite support iteration of the $\dot \QQ_\alpha$, and must be ccc.
\end{proof}

\begin{proposition}
Let $G$ be $\mathbb{P}$-generic over $\MM$. Then $\sigma$-covering holds between $\MM[G]_\sS$ and $\MM[G]$. That is, whenever $x \in \MM[G]$ is so that $\MM[G] \models \vert x \vert = \omega$ and $x \subseteq \MM[G]_\sS$, there is $y \in \MM[G]_\sS$ so that $\MM[G]_\sS \models \vert y \vert = \omega$ and $x \subseteq y$.
\end{proposition}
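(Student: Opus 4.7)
The plan is as follows. Let $x=\{x_n : n\in\omega\}\in \MM[G]$ with $x\subseteq \MM[G]_\sS$, enumerated in $\MM[G]$. Pick a $\PP$-name $\dot x$ for $\langle x_n\rangle$. For each $n$, applying a mixing argument together with the $\ccc$ of $\PP$, find a countable maximal antichain $\{\bar p_{n,k} : k\in\omega\}\subseteq \PP$ and names $\dot u_{n,k}\in V\cap \HS$ with $\bar p_{n,k}\Vdash \dot x(n)=\dot u_{n,k}$. Each $\dot u_{n,k}$ has some finite support $E_{n,k}$ and each $\bar p_{n,k}$ has finite support, so the (countable) union $\bigcup_{n,k}(E_{n,k}\cup \supp(\bar p_{n,k}))$ projects into $\gamma\times \omega$ for some $\gamma<\omega_1$, using that $\omega_1^\MM=\omega_1^{\MM[G]}$ by $\ccc$. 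By Lemma~\ref{lem:restr} we may assume each $\bar p_{n,k}$ is already supported in $\gamma\times \omega$, i.e.\ in $\PP_\gamma$.

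Next, consolidate supports. Apply Lemma~\ref{lem:section} with $e=E_{n,k}\cup\{(\gamma,0)\}$ (whose maximum coordinate is $\gamma$ and whose $\gamma$-section is $\{0\}$) to obtain, below a refinement of $\bar p_{n,k}$, an equivalent name $\dot v_{n,k}\in \HS$ with $\fix(\{(\gamma,0)\})\le \sym(\dot v_{n,k})$. After refining the antichains accordingly, we may assume $\bar p_{n,k}\Vdash \dot x(n)=\dot v_{n,k}$, so that $x_n=\dot v_{n,k(n)}^G$ where $k(n)$ is the unique $k$ with $\bar p_{n,k}\in G$.

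The intended $y$ is the collection of all interpretations of names of support $\{(\gamma,0)\}$ of bounded rank: fix a large enough $\eta\in \Ord^\MM$ (large enough to contain all $\dot v_{n,k}$, and chosen canonically via the ground-model global well-order $\vartriangleleft$), and set
\[
\dot y \;=\; \{(\mathbbm 1,\dot z) \mid \dot z \in V\cap \HS,\; \fix(\{(\gamma,0)\})\le \sym(\dot z),\; \rank(\dot z)<\eta \}.
\]
Since $\fix(\{(\gamma,0)\})$-automorphisms preserve both rank and the property of having support $\{(\gamma,0)\}$, we get $\sym(\dot y)\supseteq \fix(\{(\gamma,0)\})$, so $\dot y \in \HS$ and $y:=\dot y^G\in \MM[G]_\sS$ contains every $x_n$.

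The main obstacle is showing that $y$ is \emph{countable in $\MM[G]_\sS$}. The surjection $f:=f_0^\gamma=\dot g_{\gamma,0}^G$ lies in $\MM[G]_\sS$ and witnesses $A_\gamma$ (and hence $A_\gamma^{<\omega}$) to be countable there. Adapting the construction of the injection $F$ from the proposition of Section~\ref{section:o}, but restricted to codes whose minimal index is at most $\gamma$ and whose name-rank is $<\eta$, we build an injection $y\hookrightarrow \omega\times A_\gamma^{<\omega}$ in $\MM[G]_\sS$: each $w\in y$ is assigned the pair $(i(w),t(w))$ where $t(w)\in A_\gamma^{<\omega}$ encodes the minimal-support tuple of $w$ (lying in $A_\gamma$ thanks to $\gamma$ bounding the minimal index, with enumeration via $f$), and $i(w)\in \omega$ is an index coming from a canonical $\vartriangleleft$-based enumeration, reduced modulo forced equality using the $\Gamma$-homogeneity argument of that proposition together with the bound $\rank(\dot z)<\eta$. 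The delicate point is to verify that, after quotienting out forcing-equivalent codes and using the homogeneity above stage $\gamma$ provided by the tail of the iteration, only countably many residual codes contribute — this is the crux, paralleling the uniqueness claim in the proof of the injection $F$.
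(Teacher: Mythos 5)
Your setup (countable antichains via ccc, bounding all supports below some $\gamma<\omega_1$, and using Lemma~\ref{lem:section} below refined conditions to replace each name by one with support $\{(\gamma,0)\}$) is fine and close in spirit to the paper's use of Lemma~\ref{lem:simplefilter}. The genuine gap is your choice of $y$: the name $\dot y$ collecting \emph{all} $\dot z\in\HS$ with $\fix(\{(\gamma,0)\})\le\sym(\dot z)$ and $\rank(\dot z)<\eta$ evaluates to a set that is provably uncountable in $\MM[G]_\sS$, so the ``main obstacle'' you defer is not a delicate point but an impossibility. Indeed, every check-name $\check v$ for $v\in\MM$ of name-rank below $\eta$ satisfies $\sym(\check v)=\sG\ge\fix(\{(\gamma,0)\})$, so $y$ contains all ground-model sets of rank (roughly) below $\eta$; since $\eta$ must majorize the ranks of the names $\dot v_{n,k}$, in any nontrivial instance $y$ contains uncountably many ordinals (or all ground-model reals), and uncountability passes down from $\MM[G]$ (where $\omega_1$ is preserved by ccc) to the submodel $\MM[G]_\sS$. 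Consequently no injection of $y$ into $\omega\times A_\gamma^{<\omega}$ can exist, and the planned adaptation of the ordering-principle injection $F$ cannot go through: restricting to minimal index $\le\gamma$ and rank $<\eta$ still leaves uncountably many codes $(\bar p,\dot z,\dots)$, and the first coordinate of $F$ ranges over ordinals, not $\omega$.

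The paper avoids this by never taking ``all names of small support'': the covering set is enumerated by a \emph{countable ground-model index set}. For each of the countably many names $\dot z\in c$ and each pair of finite sequences $s\in(\alpha\times\omega)^{<\omega}$, $t\in\omega^{<\omega}$, one forms a single corrected name $\dot x_{\dot z,s,t}$ consisting of all $(\bar p,\dot y)$ forcing the definability formula of Lemma~\ref{lem:define} with the parameters $\dot g_{s(i)}$ replaced on the generic side by $\dot g_{\alpha,0}(t(i))$; each such name has support $\{(\alpha,0)\}$, and when $s$ enumerates the support of $\dot z$ and $t$ records where those generics appear in the enumeration $\dot g_{\alpha,0}^G$ of $A_\alpha$, its evaluation equals $\dot z^G$. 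A ground-model surjection $h\colon\omega\to c\times(\alpha\times\omega)^{<\omega}\times\omega^{<\omega}$ then yields an $\sS$-name $\dot d$ for a function with domain $\omega$ whose range covers $x$, so countability of the cover is automatic rather than something to be proved afterwards. If you want to salvage your argument, replace your $\dot y$ by such an $\omega$-indexed family built from your $\dot v_{n,k}$ (or from $c$) and the coding of the finitely many relevant generics through $\dot g_{\gamma,0}$.
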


\begin{proof}
Let $x = \dot x^G$ for some $\mathbb{P}$-name $\dot x \in \MM$. For some $p \in G$ and large $\gamma$, $p \Vdash \dot x \subseteq \HS_\gamma^\bullet \wedge \dot x \text{ is countable}$. Using the ccc of $\PP$, we can find a countable set $c \subseteq \HS_\gamma$ so that $p \Vdash \dot x \subseteq c^\bullet$. Moreover, using Lemma~\ref{lem:simplefilter}, we can assume that for each $\dot z \in c$, there is $e \in [\omega_1\times \omega]^{<\omega}$ so that $\fix(e) \leq \sym(\dot z)$. Let $\alpha< \omega_1$ be large enough so that for each $\dot z \in c$ there is such $e$ in $[\alpha \times \omega]^{<\omega}$.

Recall Lemma~\ref{lem:define} and its proof: If $\fix(e) \leq \sym(\dot z)$, and $\langle (\beta_i, n_i) : i < k \rangle$ enumerates $e$, then $y \in \dot z^G$ iff $$\MM[G] \models \varphi(y, \dot z, \langle \dot g_{\beta_i, n_i} : i < k \rangle, \dot \Gamma^G, \langle \dot g_{\beta_i, n_i}^G : i < k \rangle),$$ 
for the formula $\varphi$ expressing that $y \in \dot z^H$ for some $H \in \dot \Gamma^G$ satisfying $\dot g_{\beta_i, n_i}^H = \dot g_{\beta_i, n_i}^G$ for all $i<k$. Since $\dot g_{\alpha, 0}^G$ enumerates $\dot A_\alpha^G$, there is a sequence $\langle m_i : i < k \rangle$ so that $\langle \dot g_{\beta_i, n_i}^G : i < k \rangle = \langle \dot g_{\alpha, 0}^G(m_i) : i  <k \rangle$. 

For any $\dot z \in c$, any $s \in (\alpha \times \omega)^{<\omega}$, and any $t \in \omega^{<\omega}$ of the same length, define $\dot x_{\dot z, s, t}$ to consist of all $(\bar p, \dot y) \in \PP\times \HS_\gamma$ so that $$\bar p \Vdash  \varphi(\dot y, (\dot z)\check{}, \langle \dot g_{s(i)} : i < \vert s \vert \rangle\check{}, \dot \Gamma, \langle \dot g_{\alpha, 0}(t(i)) : i < \vert t \vert \rangle^\bullet)$$ Then $\dot x_{\dot z, s, t}$ is an $\sS$-name with $\fix(\{(\alpha, 0)\}) \leq \sym(\dot x_{\dot z, s, t})$. Letting $h \in \MM$ be a surjection from $\omega$ to $\bigcup_{k\in\omega}c\times (\alpha \times \omega)^k \times \omega^k$, we find that $$ \dot d = \{ (n, \dot x_{h(n)})^\bullet : n \in \omega \}$$ is an $\sS$-name for a function with domain $\omega$ and with $x \subseteq \ran(\dot d^G)$, as desired 
\end{proof}

\begin{corollary}[see also \cite{KaragilaSchilhan2025}]
$\Vdash_\sS \DC$. 
\end{corollary}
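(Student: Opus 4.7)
The plan is to combine the $\sigma$-covering property established in the previous proposition with the characterization of $\DC$ stated in the paragraph preceding it: $\DC$ holds iff every tree without maximal nodes admits an increasing $\omega$-chain. So fix, in $\MM[G]_\sS$, a tree $(T,<_T)$ without maximal nodes. Our goal is to produce an increasing $\omega$-chain in $T$ lying in $\MM[G]_\sS$.

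Pass to $\MM[G]$, which satisfies $\AC$ since $\MM\models\ZFC$. There we may apply $\DC$ to the tree $T$ (noting that $T\in\MM[G]_\sS\subseteq\MM[G]$) and obtain an increasing $\omega$-chain $\langle t_n:n<\omega\rangle$ in $T$. The set $X=\{t_n:n<\omega\}$ is then a countable (in $\MM[G]$) subset of $T$, and hence of $\MM[G]_\sS$. By the previous proposition applied to $X$, there exists $y\in\MM[G]_\sS$ with $X\subseteq y$ such that $\MM[G]_\sS\models |y|=\omega$.

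Now work entirely in $\MM[G]_\sS$. Fix a bijection $y\leftrightarrow\omega$, giving a wellorder $<_y$ of $y$. Define
\[
Z=\{s\in y\cap T:\text{there exists an infinite $<_T$-chain through $s$ contained in } y\cap T\}.
\]
Since each $t_n$ witnesses (via the tail $\{t_k:k\ge n\}\subseteq X\subseteq y$) that $t_n\in Z$, the set $Z$ is nonempty. Moreover $Z$ has no $<_T$-maximal element: if $s\in Z$ is witnessed by an infinite chain $C\subseteq y\cap T$ through $s$, then for any $t\in C$ with $t>_T s$ the tail $C\cap T_{>t}$ is still infinite, so $t\in Z$ and $s<_T t$. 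Using the wellorder $<_y$ restricted to $Z$, recursively construct $\langle s_n:n<\omega\rangle$ by setting $s_0$ to be the $<_y$-least element of $Z$, and letting $s_{n+1}$ be the $<_y$-least element of $Z$ with $s_n<_T s_{n+1}$, which exists by the preceding observation. This sequence lies in $\MM[G]_\sS$ and is an increasing $\omega$-chain in $T$, as required.

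The main point of care is verifying that $Z$ has no maximal elements; this relies crucially on our chosen definition of $Z$ in terms of the existence of an infinite chain through $s$ (rather than, say, asking only that $\{t\in y\cap T:s<_T t\}$ be infinite, a condition which is not obviously preserved upon passing to successors in a general, non-finitely-branching tree). Everything else is a routine application of $\sigma$-covering together with the fact that wellorderable sets satisfy $\AC$ in $\ZF$.
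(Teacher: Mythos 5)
Your overall route is the same as the paper's: use $\AC$ in $\MM[G]$ to get an increasing $\omega$-chain, cover it by a countable $y\in\MM[G]_\sS$ via $\sigma$-covering, and then extract an increasing $\omega$-chain from the well-orderable set $y\cap T$ by a canonical recursion. The difference lies only in the extraction step, and there your argument has a gap. Under the natural reading of ``infinite chain through $s$'' as ``infinite chain containing $s$'', the set $Z$ can have $<_T$-maximal elements when $T$ has nodes of infinite height: the set $y$ is handed to you by the $\sigma$-covering proposition and may well contain a chain of order type $\omega+1$ whose top node $s^*$ has no element of $y\cap T$ above it; then $s^*\in Z$ (witnessed by $s^*$ together with its predecessors in that chain), but no element of $Z$ lies $<_T$-above $s^*$. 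Your recursion does not backtrack, so if $<_y$ happens to make $s^*$ the least admissible choice at some stage, the construction of $s_{n+1}$ fails. Relatedly, even when the witnessing chain $C$ lies above $s$, the claim that for \emph{any} $t\in C$ with $t>_T s$ the set $C\cap T_{>t}$ is infinite is false when $C$ has order type $\omega+1$ and $t$ is its top; what is true, and suffices, is that \emph{some} such $t$ works, e.g.\ the $<_T$-least element of $C$ above $s$, using that chains in a tree are well-ordered and $C$ is infinite.

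The repair is easy: require the witnessing chain to consist of elements $\geq_T s$ (every $t_n$ is still in $Z$ via its tail, and the observation above then produces, for each $s\in Z$, some $t\in Z$ with $s<_T t$), or first reduce $\DC$ to trees of finite sequences, where every node has only finitely many predecessors and your claims hold verbatim. The paper sidesteps the issue altogether: it applies a pruning derivative to the countable covering set, repeatedly (transfinitely, if needed) removing maximal nodes, to reach a subtree without maximal nodes that still contains every $t_n$, and then recurses along the well-order; that extraction is insensitive to the height of $T$ and to which extra elements $y$ happens to contain.
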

\begin{proof}
Consider a generic $G$ and let $T \in \MM[G]_\sS$ be a tree without maximal nodes. Since $\MM[G] \models \AC$, there is an increasing chain $\langle t_n : n \in \omega \rangle$ of $T$ in $\MM[G]$. By the previous proposition, there is a countable $Y$ in $\MM[G]_\sS$ so that $\{t_n : n \in \omega\} \subseteq Y$. We may assume that $Y \subseteq T$. Recursively applying a pruning derivative to $Y$, whereby we remove all maximal elements in each step, we obtain a subtree $T' \subseteq Y$ without maximal elements. None of the $t_n$ could ever have been removed, so $T'$ is non-empty as e.g. $t_0 \in T'$. As $T'$ is countable, and thus well-ordered, we do find an increasing chain $\langle s_n : n \in \omega \rangle$ of $T'$, and thus also of $T$, in $\MM[G]_\sS$. 
\end{proof}

\begin{proposition}\label{proposition:notdc}
  $\forces_\sS\neg\DC_{\omega_1}$, and thus in particular $\forces_\sS\neg\AC$.
\end{proposition}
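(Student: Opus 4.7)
The plan is to exhibit in $\MM[G]_\sS$ an $\omega_1$-indexed family of non-empty sets with no choice function. This suffices because $\DC_{\omega_1}$ implies $\AC_{\omega_1}$ (taking $F(s):=A_{\vert s\vert}$), and $\AC$ implies $\DC_\kappa$ for every cardinal $\kappa$; so the failure of $\AC_{\omega_1}$ simultaneously yields $\neg\DC_{\omega_1}$ and $\neg\AC$.

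Analogously to the definition of $\dot A$, I would set $\dot B_\alpha = \{\dot g_{\alpha, n} : n \in \omega\}^\bullet$ for each $\alpha$ with $2\le\alpha<\omega_1$. Using that $\bar\pi(\dot g_{\alpha, n}) = \dot g_{\alpha, m}$ whenever $\dot\pi(\alpha)$ maps $n$ to $m$, we get $\sym(\dot B_\alpha) = \sG$, and $\dot B_\alpha^G$ is the non-empty set of surjections $\omega \to \dot A_\alpha^G$ added at stage $\alpha$. The sequence $\langle \dot B_\alpha : 2\le\alpha<\omega_1\rangle^\bullet$ is then an $\sS$-name for a sequence of non-empty sets, and the claim is that no choice function for this sequence exists in $\MM[G]_\sS$.

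Suppose towards contradiction that $\bar p \Vdash_\sS$ \emph{$\dot f$ is such a choice function}. By Lemma~\ref{lem:simplefilter}, I would first strengthen $\bar p$ and replace $\dot f$ by an equivalent name with $\fix(e)\le\sym(\dot f)$ for some $e\in[\alpha_0\times\omega]^{<\omega}$, $\alpha_0<\omega_1$. Next I would pick $\alpha$ with $\alpha_0<\alpha<\omega_1$ and, by density, strengthen to $\bar p'$ forcing $\dot f(\alpha) = \dot g_{\alpha, n_0}$ for some $n_0\in\omega$. Then I would iteratively apply Lemma~\ref{lem:simplefilter} at each coordinate $\beta>\alpha$ in the finite support of $\bar p'$ to arrange $\fix(e_\beta)\le\sym_{\sS_\beta}(\dot p'(\beta))$ for some finite $e_\beta\subseteq\beta\times\omega$; and, if necessary, apply an automorphism in $\sym(\dot f)$ acting at coordinate $\alpha$ to adjust so that $n_0\notin\bigcup_\beta(e_\beta)_\alpha$.

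Finally I would pick $n_1\in\omega$ distinct from $n_0$, outside $\dom(\bar p'(\alpha))$, and outside each $(e_\beta)_\alpha$ (a cofinite set of choices), and let $\bar\pi\in\sG$ be the automorphism with $\dot\pi(\gamma)=\id$ for $\gamma\neq\alpha$ and $\dot\pi(\alpha)$ a name for the transposition $(n_0\;n_1)$. Since $\alpha>\alpha_0$, $\bar\pi\in\fix(e)\le\sym(\dot f)$, so $\bar\pi(\bar p')\Vdash\dot f(\alpha)=\bar\pi(\dot g_{\alpha,n_0})=\dot g_{\alpha, n_1}$. The conditions $\bar p'$ and $\bar\pi(\bar p')$ are then compatible: they agree below $\alpha$ (where $\bar\pi$ is trivial); at $\alpha$ the swap moves $n_0$ to the fresh $n_1$, so they agree on $\dom(\bar p'(\alpha))\setminus\{n_0\}$; and above $\alpha$, the choice $n_0, n_1\notin(e_\beta)_\alpha$ gives $\bar\pi\restriction\beta\in\fix(e_\beta)\le\sym(\dot p'(\beta))$, hence $\bar\pi(\dot p'(\beta))=\dot p'(\beta)$. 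Any common extension $\bar q \le \bar p', \bar\pi(\bar p')$ then forces $\dot g_{\alpha, n_0}=\dot f(\alpha)=\dot g_{\alpha, n_1}$, contradicting the forced distinctness of these two generics in the finite-support product $\sT(\dot{\mathbb{R}}_\alpha)$. The hard part will be the dovetailing that produces a reduced $\bar p'$ satisfying both the forcing decision $\dot f(\alpha)=\dot g_{\alpha, n_0}$ and the support condition $n_0\notin\bigcup_\beta(e_\beta)_\alpha$, requiring careful iterated applications of Lemma~\ref{lem:simplefilter} together with a homogeneity-based adjustment of $n_0$ through an automorphism in $\sym(\dot f)$.
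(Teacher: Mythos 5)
Your overall route is essentially the paper's: reduce $\DC_{\omega_1}$ to a choice-type consequence about the family of stage-$\alpha$ surjection generics (the paper refutes an injection of $\omega_1$ into $\{\dot g_{\alpha,n}\}^\bullet$, you refute a choice function for the $\dot B_\alpha$'s; both are fine consequences of $\DC_{\omega_1}$), and then derive a contradiction from a transposition acting at a single coordinate $\alpha$ above the support of $\sym(\dot f)$. Two small points are cosmetic: $\sym(\dot B_\alpha)=\sG$ is not literally true for the naive name $\{\dot g_{\alpha,n}:n\in\omega\}^\bullet$, since $\bar\pi(\dot g_{\alpha,n})$ is only of the form $\dot g_{\alpha,m}$ when $\dot\pi(\alpha)$ is decided; close the name under $\sG$ exactly as the paper does for $\dot A_\delta$ and use Lemma~\ref{lem:simplegroup}. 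Similarly, $\dom(\dot p'(\alpha))$ is only a name, so either decide it by strengthening below $\alpha$, or let $\dot\pi(\alpha)$ be a \emph{name} for a suitable transposition, as in the paper's proof.

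The genuine gap is your treatment of the coordinates of $\bar p'$ above $\alpha$. For $\bar p'$ and $\bar\pi(\bar p')$ to be compatible you need, at each $\beta>\alpha$ in the support, that $\bar\pi\restriction\beta\in\fix(e_\beta)$, which requires \emph{both} $n_1\notin(e_\beta)_\alpha$ (easy, $n_1$ is fresh) \emph{and} $n_0\notin(e_\beta)_\alpha$; and this cannot simply be tolerated, since $\dot p'(\beta)$ may be a name for a collapse condition mentioning $\dot g_{\alpha,n_0}$, in which case $(\bar\pi\restriction\beta)(\dot p'(\beta))$ mentions $\dot g_{\alpha,n_1}$ and is genuinely incompatible with $\dot p'(\beta)$. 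Your proposed repair --- applying some $\tau_h\in\sym(\dot f)$ acting at coordinate $\alpha$ to move $n_0$ out of $\bigcup_\beta(e_\beta)_\alpha$ --- cannot work: such an automorphism transforms the decided index and the supports coherently, sending $n_0\mapsto h(\alpha)(n_0)$ while $\tau_h\fix(e_\beta)\tau_h^{-1}=\fix(h\cdot e_\beta)$, so $h(\alpha)(n_0)\in\bigcup_\beta(h\cdot e_\beta)_\alpha$ if and only if $n_0\in\bigcup_\beta(e_\beta)_\alpha$; the obstruction is invariant under the adjustment. The missing ingredient is the paper's Lemma~\ref{lem:restr}: since $\fix(e)\le\sym(\dot f)$ with $e\subseteq\alpha\times\omega$, $\check\alpha$ is fully symmetric, and $\fix(\{(\alpha,n_0)\})\le\sym(\dot g_{\alpha,n_0})$, the statement $\dot f(\check\alpha)=\dot g_{\alpha,n_0}$ is already forced by $\bar p'\restriction(\alpha+1)$ followed by trivial conditions. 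Discarding the tail of $\bar p'$ in this way removes all coordinates above $\alpha$ from the picture (no iterated use of Lemma~\ref{lem:simplefilter} is needed there at all), and your swap argument at coordinate $\alpha$ then goes through; with that substitution and the cosmetic fixes above, your proof is correct and matches the paper's.
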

\begin{proof}
  This is essentially the same argument that is used to show that $\AC$ fails in the basic Cohen model (see for example \cite[Lemma 5.15]{jech}). Assume for a contradiction that there is $\dot F\in\HS$ such that $\bar p \in\PPP$ forces that $\dot F$ is an injection from $\omega_1$ into $\{\dot g_{\alpha, n}\mid (\alpha, n)\in \omega_1 \times \omega\}^\bullet$ (clearly, one can construct such an injection under $\DC_{\omega_1}$). By Lemma~\ref{lem:simplefilter} we can assume that $\fix(e)\leq\sym(\dot F)$ for some $e\in [\omega_1 \times \omega]^{<\omega}$. Pick $\gamma<\omega_1$, $\bar q\le \bar p$ and $\alpha >\max(\dom(e))$ such that $\bar q\forces\dot F(\check{\gamma})=\dot g_{\alpha,0}$. By Lemma~\ref{lem:restr}, this is already forced by $\bar q' = \bar q \restriction (\alpha + 1)^\frown\langle \dot{\mathbbm{1}}_\beta : \beta \in [\alpha +1, \omega_1) \rangle$. Let $\dot \pi(\beta)$ be a name for the identity for each $\beta \in \omega_1 \setminus \{\alpha \}$ and let $\dot \pi(\alpha)$ be a name for the permutation of $\omega$ switching $0$ with the minimal $n > 0$ outside of the domain of $\dot q(\alpha)$. It suffices to note three things: 
  \begin{enumerate}
   \item $\bar \pi \in \fix(e)$,
    \item $\Vdash \bar \pi(\dot g_{\alpha,0}) \ne \dot g_{\alpha,0}$, and
    \item $\bar \pi(\bar q')\parallel \bar q'$.
  \end{enumerate}
  This clearly poses a contradiction, as $\bar \pi(\bar q') \Vdash \dot F(\check{\gamma}) \neq \dot g_{\alpha, 0}$ while a compatible condition, $\bar q'$, forces the opposite. 
  \end{proof}
  
\subsection*{Acknowledgements}
The second author was supported in part by a UKRI Future Leaders Fellowship [MR/T021705/2]. This research was funded in whole or in part by the Austrian Science Fund (FWF) [10.55776/ESP5711024]. For open access purposes, the authors have applied a CC BY public copyright license to any author-accepted manuscript version arising from this submission. No data are associated with this article.
 
\bibliographystyle{plain}

\end{document}